\renewenvironment{proof}[1][\proofname]{{\bfseries #1.} }{\qed}
\def\Cov{{\rm Cov\,}}
\newcommand{\field}[1]{\mathbb{#1}}
\newcommand{\R}{\field{R}}
\newcommand{\Z}{\field{Z}}
\newcommand{\bE}{\field{E}}
\newcommand{\Var}{{\rm Var}}
\newcommand{\Corr}{{\rm Corr}}
\newcommand{\cC}{{\cal C}}
\newcommand{\cI}{{\cal I}}
\newcommand{\cS}{{\mathcal S}}
\newcommand{\Tb}{{\mathbb{T}}}
\newcommand{\Ec}{{\mathcal{E}}}
\newcommand{\Lc}{{\mathcal{L}}}
\newcommand{\Zc}{{\mathcal{Z}}}
\newcommand{\Nc}{\mathcal{N}}
\newcommand{\dg}{{\dot\gamma}}
\def\authors#1{{ \begin{center} #1 \vspace{0pt} \end{center} } \smallskip}
\def\institution#1{{\sl \begin{center} #1 \vspace{0pt} \end{center} } }
\def\inst#1{\unskip $^{#1}$}
\def\title#1{{\huge\bf  \begin{center} #1 \vspace{0pt} \end{center}  } \smallskip}
\def\cS{{\field S}}
\def\E{{\mathbb{ E}}}
\def\P{{\mathbb{P}}}
\newtheorem{theorem}{Theorem}[section]
\newtheorem{proposition}[theorem]{Proposition}
\newtheorem{lemma}[theorem]{Lemma}
\newtheorem{corollary}[theorem]{Corollary}
\newtheorem{definition}[theorem]{Definition}
\newtheorem{remark}[theorem]{Remark}
\newtheorem{example}[theorem]{Example}
\begin{document}

\date{Nov 2021}

\title{\sc {Correlation Structure and Resonant Pairs for Arithmetic Random Waves}}
\authors{\large Valentina Cammarota\inst{\star}, Riccardo-W. Maffucci\inst{\diamond},\\ Domenico Marinucci\inst{\ddagger}, Maurizia Rossi\inst{\divideontimes}}
\institution{\inst{\star}Dipartimento di Scienze Statistiche, Università di Roma La Sapienza\\
\inst{\diamond}Department of Mathematics, University of Coventry\\
\inst{\ddagger}Dipartimento di Matematica, Università di Roma Tor Vergata\\
\inst{\divideontimes}Dipartimento di Matematica e Applicazioni, Università di Milano-Bicocca
}

\begin{abstract}

The geometry of Arithmetic Random Waves has been extensively investigated in the last fifteen years, starting from the seminal papers \cite{rudwi2,orruwi}. In this paper we study the correlation structure among different functionals such as nodal length, boundary length of excursion sets, and the number of intersection of nodal sets with deterministic curves in different classes; the amount of correlation depends in a subtle fashion from the values of the thresholds considered and the symmetry properties of the deterministic curves. In particular, we prove the existence of \emph{resonant pairs} of threshold values where the asymptotic correlation is full, that is, at such values one functional can be perfectly predicted from the other in the high energy limit. We focus mainly on the $2$-dimensional case but we discuss some specific extensions to dimension $3$.

\smallskip

\noindent\textbf{Keywords and Phrases:} Random Eigenfunctions, Arithmetic Random Waves, Nodal Length, Nodal Intersections, Boundary Length, Resonant Pairs

\smallskip

\noindent \textbf{AMS Classification: }60G60; 60D05; 58J50; 35P20

\end{abstract}

\section{Introduction}

The geometry of nodal sets for Gaussian random eigenfunctions has been the object of a considerable amount of attention over the last $15$ years. Most papers have focussed on the $2$-dimensional case, either in Euclidean settings (Berry's random waves, see  e.g.  \cite{berry2, BCW19,  NourdinPeccatiRossi2019, Vid21}), or on compact manifolds, most notably the sphere $\mathbb{S}^2$ (see e.g. \cite{wispha, MRossiWigman2020, Tod20}) and the torus $\mathbb{T}^2$ (see  e.g. \cite{rudwi2,orruwi, KKW, MPRW, TLR23}), among others. The derivation of the expected value for the nodal length is now  standard thanks to the Gaussian Kinematic Formula by Adler and Taylor (see \cite{adltay}); the analysis of the variance is more challenging, and goes back to \cite{wispha} for the case of the sphere (random spherical harmonics) and to \cite{KKW} for the torus (arithmetic random waves); in the physics literature, the variance for the nodal length of planar eigenfunctions (Berry's random waves) was earlier given in \cite{berry2}. 

The analysis of the asymptotic distribution for the nodal length of random eigenfunctions was basically started in \cite{MPRW}. In that paper, the nodal length of arithmetic random waves is expanded into orthogonal terms corresponding to so-called Wiener chaos components; it is then shown that the behaviour of nodal length is dominated (in the $L^2$ sense, as the eigenvalues diverge) by the fourth-order chaos, whose limiting distribution is nonGaussian in the toroidal case. A similar phenomenon takes place in the planar and spherical cases, see \cite{NourdinPeccatiRossi2019} and \cite{MRossiWigman2020}, respectively, although in both these cases the limiting behaviour is Gaussian.

The expansion into Wiener chaoses has allowed to provide an interpretation to the so-called Berry cancellation phenomenon: namely, the fact that the asymptotic variance of nodal length is an order of magnitude smaller than the variance for the measure of boundary curves at any non-zero thresholds. As argued extensively elsewhere, see \cite{MPRW, MRossiWigman2020, NourdinPeccatiRossi2019, CM18}, this cancellation corresponds to the disappearance of the second-order chaos term, which in turn corresponds to the random $L^2$ norms of eigenfunctions and plays no role in the fluctuations of the nodal lines. Because this random norm has variance which is larger than all the other terms in the orthogonal decomposition, its disappearence in the nodal case fully explains the Berry's cancellation phenomenon.

The domination of this second order chaotic term yields another remarkable consequence, which was already derived, by a different argument, in \cite{Wigman2012}. In particular, the correlation between boundary lengths is asymptotically equal to one (in absolute value) as the eigenvalues diverge. This follows easily by the fact that the random sequences corresponding to boundary lengths at different level are all asymptotically proportional (up to different scaling constants) to the same sequence of random variables, namely the random $L^2$ norms of the eigenfunctions. Asymptotic correlation has been noted in the case of random spherical harmonics \cite{Wigman2012, MarinucciRossi21}, but it obviously holds with exactly the same argument both for planar random waves and for toroidal eigenfunctions. For the same argument, it is also immediate to notice that nodal lengths and the boundary of level curves have asymptotically correlation zero
; indeed nodal length is dominated by terms in the fourth-order chaos, which are by construction orthogonal to the random norm, which belongs to the second-order chaos.

For the reasons we mentioned above, it is clear that these correlation$\slash$uncorrelation phenomena are in some sense an artifact due to the random fluctuations in the $L^2$ norms; for many applications this could sound meaningless (for instance, in a quantum mechanics framework random norms should be normalized to unity). In \cite{MarinucciRossi21}, a different issue was addressed for random spherical harmonics, namely the existence of correlation \emph{after the effect of the random $L^2$- norm has been removed} (usually called partial correlation in mathematical statistics). Surprisingly, it was shown that partial correlation among boundary lengths at different levels persists, and indeed the asymptotic correlation with nodal lengths switches from zero to unity: namely, it is asymptotically possible to predict the boundary length of level curves at every threshold $u$, once the confounding effect of the random norm has been removed.  

The purpose of this paper is to investigate an analogous question, in the case of arithmetic random waves. We focus mainly on the $2$-dimensional case but we discuss some specific
extensions to dimension $3$. The final outcome turns out to be rather different than spherical harmonics and planar random waves. In particular, for dimension $2$, it turns out that, after the effect of the $L^2$ norm has been removed, full correlation between boundary lengths at any different levels does no longer hold: however, there are \emph{resonant pairs} along an algebraic curve (whose equation we write down explicitly), such that the boundary lengths computed at any two levels corresponding to these pairs have  partial correlation tending to $1$ in the high-energy limit. These pairs include the nodal case, where one of the two levels corresponds to $u=0$; moreover, resonant pairs exist (for different algebraic curves, which we also write down explicitly) for other functionals, such as nodal intersections with a fixed reference curve, see \cite{rudwig, Maf18, Cam19, RWY16}.

In order to formulate our results more precisely, we now need to introduce more notation and definitions, which have all become standard in the last decade.

\section{Background and notation}

\subsection{Arithmetic random waves}

We start by recalling the (by now standard) definition of Arithmetic Random Waves, first introduced in  \cite{rudwi2,orruwi}. Let $\Tb^d:=\R^d/\Z^d$ be the standard
$d$-dimensional flat torus and $\Delta$ the Laplacian on $\Tb^d$. We are interested
in the (totally discrete) spectrum of $\Delta$ i.e., eigenvalues $E>0$
of the Helmholtz equation
\begin{equation}
\label{eq:Schrodinger}
\Delta f + Ef=0.
\end{equation}
Let $$S:=\{{{} n \in \Z : n} =  \lambda_1^2+ \cdots + \lambda_d^2 \,\,  \mbox{{} for some} \:\lambda_1, \dots  \lambda_d\in\Z\},$$ be the collection of all numbers
expressible as a sum of two squares. Then, the eigenvalues of \eqref{eq:Schrodinger}
(also called {\it energy levels} of the torus) are all numbers of the form $E_{n}=4\pi^{2}n$, with $n\in S$.

In order to describe the Laplace eigenspace corresponding to $E_{n}$, denote by
$\Lambda_n$ the set of {\it frequencies}:
\begin{equation*}
\Lambda_n := \lbrace \lambda \in \Z^d : || \lambda || ^2 = n\rbrace\
\end{equation*}
whose cardinality
\begin{equation}
\label{eq:Nn=|Lambda|}
\mathcal N_n := |\Lambda_n|
\end{equation}
equals the number of ways to express $n$ as a sum of $d$ squares.
(Geometrically, $\Lambda_{n}$ is the collection of all standard lattice points
lying on the centred circle with radius $\sqrt{n}$.)
For $\lambda\in \Lambda_{n}$ denote the complex exponential associated to the frequency $\lambda$
\begin{equation*}
e_{\lambda}(x) = \exp(2\pi i \langle \lambda, x \rangle)
\end{equation*}
with $x \in\Tb^d$.
Of course, the collection $\{e_{\lambda}(x)\}_{\lambda\in \Lambda_n}$ of the complex exponentials corresponding to the frequencies $\lambda\in\Lambda_{n}$, is an $L^{2}$-orthonormal basis of the eigenspace $\Ec_n$ of $\Delta$ corresponding to the eigenvalue $E_{n}$. In view of \eqref{eq:Nn=|Lambda|}, we have $\dim \Ec_{n}=\mathcal N_n = |\Lambda_n|$; the fluctuations in the number $\Nc_{n}$ have been very widely studied starting from \cite{Lan08}, see for instance \cite{KKW} and the references therein.

Following \cite{rudwi2,orruwi, KKW}, we define
the {\it Arithmetic Random Waves} (also called {\it random Gaussian toral Laplace eigenfunctions})
to be the random fields
\begin{equation}\label{defrf}
T_n(x)=\frac{1}{\sqrt{\mathcal N_n}}\sum_{ \lambda\in \Lambda_n}a_{\lambda}e_\lambda(x), \quad x\in \Tb^d,
\end{equation}
where the coefficients $a_{\lambda}$ are standard complex-Gaussian random variables\footnote{From now on, we assume that every random object considered in this paper is defined on a common probability space $(\Omega, \mathcal{F}, \P)$, with $\E$ denoting mathematical expectation with respect to $\P$.} verifying the following properties: $a_\lambda$ is stochastically independent of $a_\gamma$ whenever $\gamma \notin \{\lambda, -\lambda\}$, and
$a_{-\lambda}= \overline{a}_{\lambda}$ (ensuring that $T_{n}$ is real-valued).
By the definition \eqref{defrf}, $T_n$ is a stationary, i.e. the law of $T_{n}$ is invariant under all the translations $f(\cdot)\mapsto f(x+\cdot)$,
$x \in \Tb^d$, centered
Gaussian random field with covariance function
\begin{equation*}
r_n(x,y) = r_{n}(x-y) := \E[T_n(x) \overline{T_n(y)}] = \frac{1}{\mathcal N_n}
\sum_{\lambda\in \Lambda_n}e_{\lambda}(x-y)=\frac{1}{\mathcal N_n}\sum_{\lambda\in \Lambda_n}\cos\left(2\pi\langle x-y,\lambda \rangle\right),
\end{equation*}
$x,y\in\Tb^d$ (by the standard abuse of notation for stationary fields). Note that $r_{n}(0)=1$, i.e. $T_{n}$ has unit variance. 
\\ 

 The set $\Lambda_n$ induces a discrete probability measure $\mu_n$ on the unit sphere $\mathbb{S}^{d-1}$ 
 $$\mu_n= \frac{1}{ \mathcal N_n} \sum_{\lambda \in \Lambda_n} \delta_{\frac{\lambda}{\sqrt n}}.$$ 
 It turns out that the behavior of  $\lbrace \mu_n\rbrace_n$ strongly depends on the dimension. Indeed,  if $d = 2$, ${\mathcal N_n}$ is subject to large and erratic fluctuations, it grows on average, over integers which are sums of two squares, as ${\rm const} \cdot \sqrt{\log n}$, but can be as small as 8 for an infinite sequence of prime numbers. These erratic fluctuations are mirrored by the behavior of $\lbrace \mu_n\rbrace_n$: indeed,  
 let us denote by
 $$\widehat{\mu}_n(k)=\int_{\cS^1} z^{-k} d \mu_n(z), z \in \mathbb{Z},$$ 
 the Fourier coefficients of $\mu_n$. (We note that 
 $\widehat{\mu}_n(4) \in \mathbb{R}$ since $\mu_n$ is invariant under the transformations $z \to \bar{z}$ and $z \to i \cdot z$, and that 
$|\widehat{\mu}_n(4)| \le 1$ since $\mu_n$ is a probability measure.) Remarkably,
\cite{KKW, kurwig} showed that the set of adherent points of $\{\widehat{\mu}_n(4) \}_{n \in S}$ is all of $[-1,1]$. It is known that for a density-$1$ sequence of eigenvalues, the sequence $\lbrace \mu_n\rbrace_n$ converge towards the uniform probability measure on the circle; weak-* limits of the sequence $\lbrace \mu_n\rbrace_n$ are partially classified in \cite{kurwig}. 

In dimension $d=3$ instead, we have
\[n^{\frac12 - o(1)}\ll \mathcal N_n \ll n^{\frac12 + o(1)},\] and the lattice points $\Lambda_n/\sqrt n$ become equidistributed with respect to the normalized Lebesgue measure on $\mathbb S^2$, as $n\to +\infty$ s.t. $n\not\equiv 0,4,7 (\mathrm{mod} 8)$ \cite{iwniec}.

\subsection{Geometry of level sets}\label{secGeometry}

We are interested in geometric properties of Arithmetic Random Waves, in particular we study the distribution of their level sets 
\begin{equation*}
    \lbrace x\in \mathbb T^d : T_n(x)=u\rbrace,\quad u\in \mathbb R. 
\end{equation*}
(Recall that ARW are a.s. Morse functions, hence the level sets are a.s. smooth submanifolds of codimension $1$.) 
We mainly focus on dimension $d=2$ and investigate two types of functionals: (i) the $1$-dimensional Hausdorff measure, and (ii) the number of  intersection points between nodal sets and a fixed reference curve. More precisely, we shall focus on the following functionals:
\begin{itemize}
\item boundary length at level $u\neq 0$ 
$$\mathcal{L}_{n}^{u}=\mathcal{H}^1\lbrace x\in \mathbb T^2 : T_n(x)=u \rbrace, $$     
\item nodal length 
$$\mathcal{L}_{n}=\mathcal{H}^1\lbrace x\in \mathbb T^2 : T_n(x)=0 \rbrace =: \mathcal{L}_{n}^{0},$$
\item number of intersections of the nodal lines with smooth curves $\mathcal C\subset \mathbb T^2$ with no-where zero curvature:
$$\mathcal{Z}_{n}\mathcal{(C)}=\mathcal H^0\lbrace x\in \mathbb T^2 : T_n(x)=0, \ x \in \mathcal{C} \rbrace.$$

\end{itemize}

In \cite{KKW,  MPRW},  the variance of the boundary length have been investigated; in particular, for $u\ne 0$, as $n\to +\infty$ s.t. $\mathcal N_n\to +\infty$,
\begin{equation}\label{varLu}
    \Var(\mathcal L_n^u) \sim \frac{1}{32} u^4 e^{-u^2}  \frac{E_n}{\mathcal N_n},
\end{equation}
in the nodal case, the variance is of smaller order, indeed 
\begin{equation}\label{varLu}
    \Var(\mathcal L_n) \sim \frac{1+\widehat \mu_n(4)^2}{512} \frac{E_n}{\mathcal N_n^2}.
\end{equation}
As for nodal intersections, we have the following \cite{rudwig}: as $n\to +\infty$ s.t. $\mathcal N_n\to +\infty$,
\begin{equation}\label{varZC}
    \Var(\mathcal Z_n(\mathcal C)) = (4\mathcal B_{\mathcal C}(\mu_n) - L^2) \frac{n}{\mathcal N_n} + O \left ( \frac{n}{\mathcal N_n^{\frac32}}\right ), 
\end{equation}
where $L$ is the length of the curve, and for a probability measure $\nu$ on the unit circle $\cS^1$
\[\mathcal{B}_{\cC}(\nu):=\int_{\cS^1}\left(\int_{0}^{L}\langle\theta,\dg(t)\rangle^2dt\right)^2d\nu(\theta),\]
 $\gamma = (\gamma_1, \gamma_2) :[0,L]\to\Tb^2$ being
the arc-length parametrization of 
$\cC$.
There are special curves for which the leading term in the variance \eqref{varZC} vanishes, as noted in \cite{rudwig}.
\begin{definition}[{\cite[Definition 1]{roswig}}]
A smooth curve $\cC \subset \mathbb T^2$ with nowhere zero curvature and total length $L$ is \emph{static} if
\[\mathcal{B}_{\cC}(\nu):=\int_{\cS^1}\left(\int_{0}^{L}\langle\theta,\dg(t)\rangle^2dt\right)^2d\nu(\theta)=\frac{L^2}{4},\]
for every probability measure $\nu$ on the unit circle $\cS^1$. 
\end{definition}
From \cite[Proposition 7.1]{rudwig}, a curve is static if and only if $\mathcal B_{\mathcal C}\big (  \frac{d\theta}{2\pi}  \big )=0$, where $\frac{d\theta}{2\pi}$ denotes the uniform probability measure on $\cS^1$.

The variance of nodal intersections with static curves has been investigated in \cite{roswig}. We first need some more notation: let $\delta>0$, a sequence of energy levels is called $\delta$-separated \cite{bourud11} if 
$$
\min_{\lambda\ne \lambda'\in \Lambda_n} \| \lambda - \lambda'\| \gg n^{1/4 +\delta}.
$$
For a static curve (that we denote $\mathcal C'$ to avoid confusion), for $\delta$-separated sequence of energy levels such that $\mathcal N_n\to +\infty$   
\begin{equation}
\Var(\mathcal Z_n(\mathcal C')) \sim \frac{n}{4\mathcal N_n^2}(16\mathcal A_{\mathcal C'}(\mu_n) - L^2),
\end{equation}
where for a smooth curve $\mathcal C\subset \mathbb T^2$ and a probability measure $\nu$ on the unit circle $\cS^1$
\[\mathcal{A}_{\cC}(\nu):=\int_{\cS^1}\int_{\cS^1}\left(\int_{0}^{L}\langle\theta,\dg(t)\rangle^2 \langle\theta',\dg(t)\rangle^2dt\right)^2d\nu(\theta)\nu(\theta').\]

Regarding higher dimensions, see Section \ref{3D-results} and Section \ref{3D}. 

\subsection{Chaos expansion}\label{sec_chaos}

The celebrated Wiener chaos expansion  concerns the representation of square
integrable random variables in terms of an infinite orthogonal sum. In this section we recall briefly some basic facts on Wiener chaotic expansion for non-linear functionals of Gaussian fields. Denote by $\{H_k\}_{k \ge 0}$ the Hermite polynomials on $\mathbb{R}$, defined as follows 
\begin{align} \label{hermite}
H_0 =1, \hspace{0.5cm} H_k(t)=(-1)^k \gamma^{-1}(t) \frac{d^k}{d t^k} \gamma(t), \;\; k \ge 1,
\end{align} 
where $\gamma(t)=e^{-t^2/2}/\sqrt{2 \pi}$ is the standard Gaussian density on the real line; $\mathbb{H}= \{ H_k / \sqrt{k!}: \; k \ge 0\}$ is a complete orthogonal system in 
$$L^2(\gamma)= L^2(\mathbb{R}, {\cal B} (\mathbb{R}), \gamma(t) d t).$$

The random eigenfunctions defined in \eqref{defrf} are a byproduct of the family of complex-valued, Gaussian random variables $\{a_{\lambda}\}$, defined on some probability space $(\Omega, {\cal F},\mathbb{P})$.  Define the space ${\bf A}$ to be the closure in $L^2(\mathbb{P})$ generated by all real, finite, linear combinations of random variables of the form $z a_{\lambda}+ \overline z a_{-\lambda}$, $z \in \mathbb{C}$; the space ${\bf A}$ is a real, centred, Gaussian Hilbert subspace of $L^2(\mathbb{P})$.  

For each integer $q \ge 0$, the $q$-th {\it Wiener chaos} ${\cal H}_q$ associated with ${\bf A}$ is the closed linear subspace of $L^2(\mathbb{P})$ generated by all real, finite, linear combinations of random variables of the form 
$$H_{q_1}(a_1) \cdot H_{q_2}(a_2) \cdots H_{q_k}(a_k)$$
for $k \ge 1$, where the integers $q_1, q_2, \dots, q_k \ge 0$ satisfy $q_1+q_2+ \cdots+q_k=q$ and $(a_1,a_2,\dots,a_k)$ is a real, standard, Gaussian vector extracted from ${\bf A}$. In particular ${\cal H}_0=\mathbb{R}$.

As well-known Wiener chaoses $\{{\cal H}_q, \, q=0,1,2, \dots\}$ are orthogonal, i.e., ${\cal H}_q \perp {\cal H}_p$ for $p \ne q$ (the orthogonality holds in the sense of $L^2(\mathbb{P})$) and the following decomposition holds: every real-valued function $F \in {\bf A}$ admits a unique expansion of the type 
\begin{align*} 
F = \sum_{q=0}^{\infty} F[q],
\end{align*}
where the projections $F[q] \in {\cal H}_q$ for every $q=0,1,2,\dots$ and the series converges in $L^2(\mathbb{P})$. Note that $F[0]=\mathbb{E}[F]$.\\

For $u \ne 0$, \cite[Theorem 2.4]{CMR23} proves that the boundary length $\mathcal L_n^u$ is dominated by the second order chaos: 
\begin{align*}
\mathcal L_n^u[2]= \sqrt{\frac{\pi}{8}} u^2 \phi(u) \sqrt{2 \pi^2 n} \frac{1}{{\mathcal N}_n/2} \sum_{\lambda \in \Lambda_n^+} (|a_\lambda|^2-1) + {\cal R}_1(n,u)
\end{align*}
where, under \cite[Condition 2.2]{CMR23}, $\mathbb{E}[{\cal R}_1(n,u)^2 ]=O(4 \pi^2 n/{\mathcal N}_n^2)$, and $\Lambda_n^+$ is the following subset of the set of frequencies: if $n$ is not a square
$$\Lambda_n^+=\{ \lambda=(\lambda_1, \lambda_2) \in \Lambda_n: \lambda_2 >0\},$$ 
otherwise $$\Lambda_n^+=\{ \lambda=(\lambda_1, \lambda_2) \in \Lambda_n: \lambda_2 >0\} \cup \{(\sqrt n, 0)\}.$$ 
Note that for every $n \in S$, $|\Lambda_n^+|= \mathcal N_n/2$. 

Also, inspired by  \cite[Lemma 4.2]{MPRW}, we are able to show that the fourth chaotic component $\mathcal L_n^u[4]$ of the length of $u$-level curves can be written as 
\begin{eqnarray*}
\mathcal L_n^u[4] = \phi(u)\sqrt{\frac{\pi}{2}} \frac{\sqrt{E_n/2}}{\mathcal N_n} \left [ a(u) W_1(n)^2 - \frac14 W_2(n)^2 - \frac14 W_3(n)^2 - \frac12 W_4(n)^2 - \left (a(u) - \frac14 \right ) + o_{\mathbb P}(1) \right],
\end{eqnarray*}
where 
\begin{align*}
&W_1(n)=\frac{1}{n \sqrt{{\mathcal N}_n/2}} \sum_{\lambda \in \Lambda^+_n} (|a_\lambda|^2-1) n,\\
&W_2(n)=\frac{1}{n \sqrt{{\mathcal N}_n/2}}  \sum_{\lambda \in \Lambda^+_n} (|a_\lambda|^2-1) \lambda_1^2, \; \;\; W_3(n)= \frac{1}{n \sqrt{{\mathcal N}_n/2}} \sum_{\lambda \in \Lambda^+_n} (|a_\lambda|^2-1) \lambda_2^2,\\
&W_4(n)= \frac{1}{n \sqrt{{\mathcal N}_n/2}} \sum_{\lambda \in \Lambda^+_n} (|a_\lambda|^2-1) \lambda_1 \lambda_2,
\end{align*}
and
\begin{equation*}
    a(u) = \frac 1 4 H_4(u) + \frac 1 2 H_2(u) - \frac 1 8,
\end{equation*}
and $o_{\mathbb P}(1)$ denotes a sequence of random variables converging to zero in probability. Equivalently, 
\begin{align*}
\mathcal L_n^u[4] = &\phi(u)\sqrt{\frac{\pi}{2}} \frac{\sqrt{E_n/2}}{\mathcal N_n} \Big [ \frac18 \frac{1}{\mathcal N_n/2} \sum_{\lambda,\lambda'\in \Lambda^+_n} (|a_\lambda|^2-1)(|a_{\lambda'}|^2-1) \left ( 8a(u) - 2\left \langle \frac{\lambda}{|\lambda|},\frac{\lambda'}{|\lambda'|}\right \rangle^2  \right ) \\
& \hspace{3cm}  - \left (a(u) - \frac14 \right ) + o_{\mathbb P}(1) \Big ].
\end{align*}
It is known, see \cite[Section 1.4 and Lemma 4.2]{MPRW}, that $\mathcal L_n^0$ is dominated by the fourth chaotic projection:
\begin{eqnarray*}
\mathcal L_n^0[4] = \frac{1}{2} \frac{\sqrt{E_n/2}}{\mathcal N_n} \left [ \frac 1 8 W_1(n)^2 - \frac14 W_2(n)^2 - \frac14 W_3(n)^2 - \frac12 W_4(n)^2 +  \frac 1 8  + o_{\mathbb P}(1) \right].
\end{eqnarray*}
\cite[Section 2.1 and Section 4]{roswig} shows that, in the case
of a non-static curve, the second chaotic projection dominates the chaos expansion of $\mathcal{Z}_n(\mathcal{C})$, and it has the form 
\begin{align*}
\mathcal{Z}_n(\mathcal{C})[2]=\mathcal{Z}^a_n(\mathcal{C})[2] + \mathcal{Z}^b_n(\mathcal{C})[2],  
\end{align*}
where $\Var(\mathcal{Z}^b_n(\mathcal{C})[2])=o(\Var(\mathcal{Z}^a_n(\mathcal{C})[2] ))$, and
\begin{align*}
    \mathcal{Z}^a_n(\mathcal{C})[2]=\frac{\sqrt{2 \pi^2 n}}{2 \pi} \frac{1}{\mathcal N_n/2}  L \sum_{\lambda \in \Lambda^+_n}(|a_\lambda|^2-1) \big( 2  I_{\lambda,\lambda'}(2,0) - 1 \big ),
\end{align*}
where we have introduced the notation
\begin{align*}
   I_{\lambda,\lambda'}(k,k'):=\frac{1}{L}\int_{0}^{L}\left\langle\frac{\lambda}{|\lambda|},\dg(t)\right\rangle^k\left\langle\frac{\lambda'}{|\lambda'|},\dg(t)\right\rangle^{k'}dt. 
\end{align*} 
Now if the curve is static \cite[Lemma 6.5]{roswig},  the leading term in the chaotic expansion of $\mathcal{Z}_n(\mathcal{C}')$ is no longer the projection onto the second chaos, but the projection  onto the fourth chaos: 
\begin{align*}
    \mathcal{Z}_n(\mathcal{C}')[4]= \mathcal{Z}^a_n(\mathcal{C}')[4] + \mathcal{Z}^b_n(\mathcal{C}')[4]
\end{align*}
where $\Var(\mathcal{Z}^b_n(\mathcal{C}')[4])=o(\Var(\mathcal{Z}^a_n(\mathcal{C}')[4] ))$, and 
\begin{align*}
    \mathcal{Z}^a_n(\mathcal{C}')[4] &= \frac{\sqrt{2 n}}{4 \mathcal {N}_{n}} L \Big[\frac{1}{\mathcal {N}_{n}/2} \sum_{\lambda,\lambda'\in\Lambda_{n}^{+}} (|a_\lambda|^2-1)(|a_{\lambda'}|^2-1) (-4 I_{\lambda, \lambda'}(2,2)-1 + 4I_{\lambda, \lambda'}(2,0))  \\
    &\hspace{1.7cm}+
\frac{1}{\mathcal {N}_{n}}\sum_{\lambda\in\Lambda_{n}}(4I_{\lambda, \lambda'}(4,0)-1) \Big]
\end{align*}
but, for any static curve we have that $I_{\lambda, \lambda'}(2,0)=1/2$ via Lemma \ref{lem:sta}, so 
\begin{align*}
    \mathcal{Z}^a_n(\mathcal{C}')[4] &= \frac{\sqrt{2 n}}{4 \mathcal {N}_{n}}L \Big[\frac{1}{\mathcal {N}_{n}/2} \sum_{\lambda,\lambda'\in\Lambda_{n}^{+}} (|a_\lambda|^2-1)(|a_{\lambda'}|^2-1) (1-4 I_{\lambda, \lambda'}(2,2)) +
\frac{1}{\mathcal {N}_{n}}\sum_{\lambda\in\Lambda_{n}}(4I_{\lambda, \lambda'}(4,0)-1) \Big]. 
\end{align*}
And in the particular case of a doubly static curve we have (using \eqref{eq:dou} -- see the proof of Lemma \ref{lem:dou}):
\[I_{\lambda, \lambda'}(4,0)=\frac{3}{8}, \qquad\qquad I_{\lambda, \lambda'}(2,2)= \frac{1}{8} \Big(1+2 \left \langle \frac{\lambda}{|\lambda|}, \frac{\lambda'}{|\lambda'|} \right \rangle^2 \Big),\]
so 
\begin{align*}
    \mathcal{Z}^a_n(\mathcal{C}'')[4] &= \frac{\sqrt{2 n}}{4 \mathcal {N}_{n}} L \Big[\frac{1}{\mathcal {N}_{n}/2} \sum_{\lambda,\lambda'\in\Lambda_{n}^{+}} (|a_\lambda|^2-1)(|a_{\lambda'}|^2-1) \Big( \frac{1}{2} - \Big \langle \frac{\lambda}{|\lambda|}, \frac{\lambda'}{|\lambda'|} \Big \rangle^2  \Big) + \frac 1 2 \Big]. 
\end{align*}

\section{Main results}

\subsection{The correlation structure in the $2$-dimensional case}

Let us start investigating the correlation between boundary lengths.
\begin{theorem}\label{thu1u2}
Let $u_1,u_2\in \mathbb R$, for $n \subset \{S\}$ 
sequence of energies such that  $\mathcal N_n\to +\infty$, 
\begin{equation*}
    \Corr(\mathcal L_n^{u_1}, \mathcal L_n^{u_2}) \to \begin{cases}
        1,\qquad & u_1, u_2\ne 0 \textnormal{ or } u_1=u_2=0, \\
        0,\qquad &\textnormal{otherwise}.
    \end{cases}
\end{equation*}
\end{theorem}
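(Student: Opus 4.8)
The plan is to reduce the correlation computation to the dominant chaotic components, exploiting orthogonality of distinct Wiener chaoses. The key structural fact recalled in the excerpt is that for $u\ne 0$ the boundary length $\mathcal L_n^u$ is dominated (in the $L^2$ sense) by its second-chaos projection $\mathcal L_n^u[2]$, whereas the nodal length $\mathcal L_n^0$ is dominated by its fourth-chaos projection $\mathcal L_n^0[4]$. Since $\mathcal H_2\perp\mathcal H_4$ in $L^2(\mathbb P)$, any cross term between a second-chaos quantity and a fourth-chaos quantity has zero covariance. This immediately suggests the dichotomy in the statement, and the proof is organized around which chaos dominates each functional.

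First I would treat the case $u_1,u_2\ne 0$. Here both $\mathcal L_n^{u_1}$ and $\mathcal L_n^{u_2}$ are, up to the negligible remainders $\mathcal R_1(n,u_i)$ with $\mathbb E[\mathcal R_1(n,u_i)^2]=O(4\pi^2 n/\mathcal N_n^2)$, scalar multiples of the \emph{same} random variable
\begin{equation*}
\Sigma_n:=\frac{1}{\mathcal N_n/2}\sum_{\lambda\in\Lambda_n^+}(|a_\lambda|^2-1),
\end{equation*}
namely $\mathcal L_n^{u_i}[2]=\sqrt{\tfrac{\pi}{8}}\,u_i^2\phi(u_i)\sqrt{2\pi^2 n}\,\Sigma_n+\mathcal R_1(n,u_i)$. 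Because the two centered leading terms are proportional with the \emph{same sign} of proportionality (both constants $\sqrt{\pi/8}\,u_i^2\phi(u_i)\sqrt{2\pi^2 n}$ are strictly positive), their correlation is exactly $+1$. I would make this rigorous by writing $\Corr$ as a ratio of covariances, substituting the chaos decompositions, using $\mathbb E[\Sigma_n^2]=\Var(\Sigma_n)\asymp 1/\mathcal N_n$, and checking via Cauchy--Schwarz that the remainder contributions to numerator and denominator are of strictly smaller order $O(1/\mathcal N_n)$ relative to the leading $O(n/\mathcal N_n)$ term, so they vanish in the limit. The case $u_1=u_2=0$ is trivial since $\Corr(\mathcal L_n^0,\mathcal L_n^0)=1$ identically.

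For the ``otherwise'' case, say $u_1=0$ and $u_2\ne0$, I would show the correlation tends to $0$. The leading term of $\mathcal L_n^0$ lives in $\mathcal H_4$ (it is a quadratic form in the $W_i(n)$), while the leading term of $\mathcal L_n^{u_2}$ lives in $\mathcal H_2$. By chaos orthogonality their covariance is governed only by cross terms between different-order chaoses, which vanish, plus remainder-versus-leading cross terms. Concretely $\Cov(\mathcal L_n^0,\mathcal L_n^{u_2})=\Cov(\mathcal L_n^0[4],\mathcal R_1(n,u_2))+(\text{higher-chaos remainder of }\mathcal L_n^0)$, and I would bound this by Cauchy--Schwarz against the small remainder norms. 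Dividing by $\sqrt{\Var(\mathcal L_n^0)\Var(\mathcal L_n^{u_2})}$ using the variance asymptotics \eqref{varLu} shows the ratio is $o(1)$.

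The main obstacle I anticipate is bookkeeping the remainders precisely enough: one must verify that the \emph{total} $L^2$-norm of $\mathcal L_n^u-\mathcal L_n^u[2]$ (collecting the fourth and higher chaoses, not just $\mathcal R_1$) is genuinely of smaller order than $\mathcal L_n^u[2]$ for fixed $u\ne0$, and symmetrically that the non-fourth chaoses of $\mathcal L_n^0$ are negligible; these are exactly the domination statements imported from \cite{CMR23} and \cite{MPRW}, so the real work is assembling the Cauchy--Schwarz estimates so that every cross term is controlled by a product of a $\Theta(\sqrt{n}/\sqrt{\mathcal N_n})$ leading norm with an $o(\sqrt{n}/\sqrt{\mathcal N_n})$ remainder norm. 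A secondary subtlety is confirming the sign of the proportionality constant is level-independent in the $u_1,u_2\ne0$ regime, which is what pins the limit at $+1$ rather than $-1$; this follows since $u^2\phi(u)>0$ for all real $u\ne0$.
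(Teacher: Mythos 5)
Your proposal is correct and follows essentially the same route as the paper, which simply notes that the theorem ``follows immediately'' from the second-chaos domination of $\mathcal L_n^u$ for $u\neq 0$ (both levels being asymptotically proportional, with positive constants, to the same de-meaned random norm) established in \cite[Corollary 2.7]{CMR23}, together with the fourth-chaos domination of $\mathcal L_n^0$ from \cite[Section 1.4]{MPRW} and the orthogonality of distinct Wiener chaoses. Your write-up merely makes explicit the Cauchy--Schwarz bookkeeping of the remainders that the paper delegates to those references.
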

Theorem \ref{thu1u2} follows immediately from \cite[Corollary 2.7]{CMR23} and \cite[Section 1.4]{MPRW}. The correlation structure between boundary length and the intersection number with a fixed smooth reference curve of nowhere zero curvature is given below.  

\begin{theorem} \label{19:07}
Let $u\in \mathbb R$, and $\mathcal{C}\subset \mathbb T^2$ be a smooth curve of total length $L$ with nowhere zero curvature and for which $\lbrace 4\mathcal B_{\mathcal C}(\mu_n)-L^2\rbrace_n$ is bounded away from zero. Then for $n\subset \{S\}$ such that $\mathcal N_n\to +\infty$,
\begin{equation}
    \Corr(\mathcal L_n^u, \mathcal Z_n(\mathcal C)) \to 0.
\end{equation}
\end{theorem}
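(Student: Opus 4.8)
The plan is to work with the Wiener--chaos decompositions recalled in Section~\ref{sec_chaos} and to exploit the orthogonality $\mathcal H_p\perp\mathcal H_q$ ($p\neq q$) of distinct chaoses. After centring, write
\begin{equation*}
\Cov(\mathcal L_n^u,\mathcal Z_n(\mathcal C))=\sum_{q\ge 1}\Cov\big(\mathcal L_n^u[q],\mathcal Z_n(\mathcal C)[q]\big).
\end{equation*}
The hypothesis that $\{4\mathcal B_{\mathcal C}(\mu_n)-L^2\}_n$ is bounded away from zero means that $\mathcal C$ is uniformly non-static; by \eqref{varZC} the intersection number is non-degenerate and is dominated by its second-order chaos, i.e.\ $\sum_{q\ne 2}\Var(\mathcal Z_n(\mathcal C)[q])=o(\Var(\mathcal Z_n(\mathcal C)))$. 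This is what will allow all off-diagonal contributions to be discarded by Cauchy--Schwarz, once the single surviving term is understood.

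When $u=0$ the argument is purely structural. By the Berry cancellation phenomenon the second chaos of the nodal length vanishes, $\mathcal L_n^0[2]=0$, and $\mathcal L_n^0$ is dominated by $\mathcal L_n^0[4]\in\mathcal H_4$. Hence the only possibly-leading pairing, that of $\mathcal L_n^0[4]$ with the dominant term $\mathcal Z_n(\mathcal C)[2]\in\mathcal H_2$, has covariance exactly $0$ by orthogonality; every remaining pairing involves a subdominant factor of one of the two functionals and is $o\big(\sqrt{\Var(\mathcal L_n^0)\,\Var(\mathcal Z_n(\mathcal C))}\big)$ by Cauchy--Schwarz and the domination estimates. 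Dividing by the product of standard deviations gives $\Corr\to 0$.

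The case $u\ne 0$ is the substantive one, because both functionals are now dominated by the \emph{same} chaos $\mathcal H_2$, so a nonzero limit correlation is not excluded a priori; the crux is to show that the covariance of the two leading second-chaos terms vanishes. Up to the remainders $\mathcal R_1(n,u)$ and $\mathcal Z^b_n(\mathcal C)[2]$, both $\mathcal L_n^u[2]$ and $\mathcal Z^a_n(\mathcal C)[2]$ are linear combinations of the independent, centred, unit-variance variables $\{|a_\lambda|^2-1\}_{\lambda\in\Lambda_n^+}$, with coefficient \emph{constant} in $\lambda$ in the first case and proportional to $2I_{\lambda,\lambda'}(2,0)-1$ in the second (here $I_{\lambda,\lambda'}(2,0)=\frac1L\int_0^L\langle\lambda/|\lambda|,\dg(t)\rangle^2dt$ is independent of $\lambda'$). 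Their covariance is therefore proportional to $\sum_{\lambda\in\Lambda_n^+}(2I_{\lambda,\lambda'}(2,0)-1)$. I would evaluate this via the invariance of $\mu_n$ under $z\mapsto iz$: the quadratic form $v\mapsto\int_{\mathbb S^1}\langle\theta,v\rangle^2\,d\mu_n(\theta)$ is invariant under $\tfrac\pi2$-rotations, hence a multiple of $|v|^2$, and since its trace equals $\int_{\mathbb S^1}|\theta|^2\,d\mu_n=1$ it equals $\tfrac12|v|^2$. Taking $v=\dg(t)$ (a unit vector, as $\gamma$ is arc-length parametrized) and integrating in $t$ gives $\int_{\mathbb S^1}\big(\tfrac1L\int_0^L\langle\theta,\dg(t)\rangle^2dt\big)\,d\mu_n(\theta)=\tfrac12$, i.e.\ $\sum_{\lambda\in\Lambda_n}I_{\lambda,\lambda'}(2,0)=\mathcal N_n/2$. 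Since $I_{\lambda,\lambda'}(2,0)$ is even in $\lambda$ and $\Lambda_n=\Lambda_n^+\sqcup(-\Lambda_n^+)$, the sum over $\Lambda_n^+$ is $\mathcal N_n/4$ and the displayed quantity is exactly $0$.

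It then remains to absorb the lower-order pieces and conclude. The remainder has $\mathbb E[\mathcal R_1(n,u)^2]=O(n/\mathcal N_n^2)$, negligible against the leading variance $\asymp n/\mathcal N_n$ of $\mathcal L_n^u$ from \eqref{varLu}; the term satisfies $\Var(\mathcal Z^b_n(\mathcal C)[2])=o(\Var(\mathcal Z^a_n(\mathcal C)[2]))$; and all projections onto $\mathcal H_q$, $q\ne 2$, are subdominant for both functionals. Each of the associated cross-covariances is $o\big(\sqrt{\Var(\mathcal L_n^u)\,\Var(\mathcal Z_n(\mathcal C))}\big)$ by Cauchy--Schwarz, so only the (vanishing) leading covariance survives and $\Corr(\mathcal L_n^u,\mathcal Z_n(\mathcal C))\to 0$. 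The main obstacle is precisely the exact cancellation $\sum_{\lambda\in\Lambda_n^+}(2I_{\lambda,\lambda'}(2,0)-1)=0$ in the $u\ne 0$ case: the essential ingredient is the $\tfrac\pi2$-rotational symmetry of the spectral measure $\mu_n$, which isotropizes its quadratic moments, while the uniform non-staticity hypothesis keeps $\Var(\mathcal Z_n(\mathcal C))$ away from degeneracy so that the Cauchy--Schwarz control of the remainders is uniform in $n$.
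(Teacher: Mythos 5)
Your proposal is correct and follows essentially the same route as the paper: for $u=0$ the conclusion comes from orthogonality of the dominant chaoses ($\mathcal H_4$ versus $\mathcal H_2$), and for $u\neq 0$ the covariance of the two leading second-chaos terms reduces, by independence of the $|a_\lambda|^2-1$, to a constant times $\sum_{\lambda\in\Lambda_n^+}\bigl(2I_{\lambda,\lambda}(2,0)-1\bigr)$, which vanishes identically. Your derivation of the key identity $\frac{1}{\mathcal N_n}\sum_{\lambda\in\Lambda_n}I_{\lambda,\lambda}(2,0)=\frac12$ via the $\pi/2$-rotational invariance of $\mu_n$ is just a more conceptual phrasing of the paper's moment relations $\frac{1}{n\mathcal N_n}\sum_\lambda\lambda_i^2=\frac12$ and $\frac{1}{n\mathcal N_n}\sum_\lambda\lambda_1\lambda_2=0$ used in Lemma \ref{lemI}.
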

The proof of Theorem \ref{19:07} is in Section \ref{2D1}. To deal with the static case, we need to define the following
\begin{equation}\label{I'4}
\cI_4'=\cI_{4}'(\mathcal C'):=\frac{1}{L}\int_{0}^{L}(\dg_1(t)^4+\dg_2(t)^4-6\dg_1(t)^2\dg_2(t)^2)dt. 
\end{equation}
\begin{theorem} \label{we201223}
Let $\mathcal{C}'\subset \mathbb T^2$ be a static curve. For $\delta$-separated sequences of energy levels $n \subset \{S\}$ such that $\mathcal N_n\to +\infty$, 
\begin{equation}
    \Corr(\mathcal L_n^u, \mathcal Z_n(\mathcal C')) \to 0,\quad \textnormal{if }u\ne 0.
\end{equation}
If moreover $\widehat \mu_n(4)\to \eta$, we have that 
\begin{equation}
    \Corr(\mathcal L_n, \mathcal Z_n(\mathcal C')) \to f_{\mathcal C'}(\eta),
\end{equation}
where 
\begin{equation} \label{eq:LZ}
f_{\mathcal{C}^{\prime}}(\eta ):=  \frac{    1+ 2 \eta \mathcal{I}'_4 + \eta^2 }{\sqrt{2} \sqrt{1+\eta^2} \sqrt{2(1-\eta^2)(2\cI_4-1)+(\eta\cI_4'+1)^2}}.
\end{equation}
\end{theorem}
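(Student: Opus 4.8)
The plan is to compute the correlation directly from the chaos expansions provided in Section \ref{sec_chaos}, exploiting the fact that for a static curve the dominant projection of $\mathcal Z_n(\mathcal C')$ is the fourth chaos $\mathcal Z_n(\mathcal C')[4]$, matching the chaotic order of $\mathcal L_n^u$ in both the nodal and non-nodal cases. Since $\mathcal L_n^u[2]$ lives in the second chaos, which is orthogonal to the fourth chaos carrying $\mathcal Z_n(\mathcal C')[4]$, I expect the $u\ne 0$ case to collapse to zero correlation: the boundary length is second-chaos dominated while the static intersection number is fourth-chaos dominated, so after normalizing by standard deviations the cross-covariance is of strictly smaller order. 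The first step, then, is to verify that $\Cov(\mathcal L_n^u[2], \mathcal Z_n(\mathcal C')[4]) = 0$ exactly and that the remainder terms $\mathcal R_1$, $\mathcal Z_n^b(\mathcal C')[4]$, and the higher chaoses contribute negligibly to the covariance relative to $\sqrt{\Var(\mathcal L_n^u)\,\Var(\mathcal Z_n(\mathcal C'))}$, yielding $\Corr \to 0$ for $u \neq 0$.

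The substantive case is $u=0$, where both functionals are dominated by the fourth chaos and a genuine limiting correlation $f_{\mathcal C'}(\eta)$ emerges. Here I would work with the quadratic-form representations: both $\mathcal L_n^0[4]$ and $\mathcal Z_n^a(\mathcal C')[4]$ are, up to scalar prefactors, bilinear expressions of the form $\frac{1}{\mathcal N_n/2}\sum_{\lambda,\lambda'\in\Lambda_n^+}(|a_\lambda|^2-1)(|a_{\lambda'}|^2-1)\,K(\lambda,\lambda')$ for explicit kernels $K$ built from $\langle \lambda/|\lambda|,\lambda'/|\lambda'|\rangle^2$ and the coefficients $I_{\lambda,\lambda'}(2,2)$. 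The key computation is to evaluate the covariance of two such quadratic forms using the Gaussian product/Wick formula for $(|a_\lambda|^2-1)$, which reduces everything to sums over $\Lambda_n^+$ of products of the two kernels. All such sums are expressible through the empirical moments of $\mu_n$ — in particular through $\widehat\mu_n(4)$ and the projection-type averages $\frac{1}{\mathcal N_n/2}\sum \langle\cdot,\cdot\rangle^4$ — so taking $n\to\infty$ along the $\delta$-separated sequence with $\widehat\mu_n(4)\to\eta$, and invoking the known variance asymptotics \eqref{varLu} and the static-curve variance formula together with the constants $\mathcal I_4$ and $\mathcal I_4'$ defined in \eqref{I'4}, should assemble into exactly the ratio displayed in \eqref{eq:LZ}.

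The main obstacle I anticipate is the bookkeeping in the cross-covariance: I must track how the two distinct kernels interact under the Wick contractions, and in particular how the curve-dependent terms $I_{\lambda,\lambda'}(2,2)$ and $I_{\lambda,\lambda'}(4,0)$ combine with the arithmetic sums $\frac{1}{\mathcal N_n/2}\sum_{\lambda,\lambda'}\langle\lambda/|\lambda|,\lambda'/|\lambda'|\rangle^{2k}$ as $n$ grows. This is where the $\delta$-separation hypothesis enters decisively, since it is precisely what controls the off-diagonal lattice correlations and guarantees that the relevant normalized sums converge to the predicted functions of $\eta$ (the appearance of $\widehat\mu_n(4)$ in the numerator $1+2\eta\mathcal I_4'+\eta^2$ and in the denominator reflects this). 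A secondary technical point is to confirm that the numerator truly factors as the cross-covariance of the two leading fourth-chaos pieces while the denominator is governed by the leading-order variances $\Var(\mathcal L_n)$ and $\Var(\mathcal Z_n(\mathcal C'))$, so that the lower-order remainders $\mathcal Z_n^b$ and the $o_{\mathbb P}(1)$ terms can be discarded uniformly; once this factorization is justified, the final expression \eqref{eq:LZ} follows by substituting the explicit constants and simplifying.
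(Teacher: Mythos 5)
Your proposal is correct and follows essentially the same route as the paper: the $u\neq 0$ case is handled by the chaos-order mismatch (second-chaos-dominated boundary length versus fourth-chaos-dominated static intersection count), and the $u=0$ case by computing the cross-covariance of the two fourth-chaos quadratic forms via Wick contractions, reducing to arithmetic sums over $\Lambda_n$ involving $I_{\lambda,\lambda'}(2,2)$ and $\langle\lambda,\lambda'\rangle^2$ (the paper's Lemma \ref{lemI}) and dividing by the known variance asymptotics. The only slight misplacement of emphasis is that $\delta$-separation is needed chiefly to guarantee the fourth-chaos domination and the variance asymptotics for $\mathcal Z_n(\mathcal C')$ from \cite{roswig}, rather than for the convergence of the normalized lattice sums, which follows from $\widehat\mu_n(4)\to\eta$ alone.
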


\subsubsection{Doubly static curves}

There are special static curves for which $f_{\mathcal C'}(u)=1$. In order to investigate this case, we need to deeply understand the geometry of static curves. 
\begin{lemma}
\label{lem:sta}
A curve is static if and only if 
$$\int_{0}^{L}\dg_1(t)^2dt=\int_{0}^{L}\dg_2(t)^2dt=L/2 \;\;\; {\rm and} \;\;\; \int_{0}^{L}\dg_1(t)\dg_2(t)dt=0.$$
\end{lemma}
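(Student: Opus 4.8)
The plan is to unwind the definition of static curves through the functional $\mathcal{B}_{\mathcal{C}}(\nu)$, reducing the condition to an identity that must hold for \emph{every} probability measure $\nu$ on $\mathcal{S}^1$, and then to expand this in terms of the coordinates of $\theta$ on the circle. Writing $\theta = (\cos\psi, \sin\psi)$, the inner integral $\int_0^L \langle\theta, \dg(t)\rangle^2\, dt$ becomes a quadratic form in $(\cos\psi, \sin\psi)$ whose coefficients are exactly the three curve-dependent integrals $A := \int_0^L \dg_1(t)^2\, dt$, $B := \int_0^L \dg_2(t)^2\, dt$, and $C := \int_0^L \dg_1(t)\dg_2(t)\, dt$. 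Squaring this quadratic form and integrating against $d\nu(\theta)$ gives $\mathcal{B}_{\mathcal{C}}(\nu)$ as a polynomial in the trigonometric moments of $\nu$ (equivalently, in the Fourier coefficients $\widehat{\nu}(2)$ and $\widehat{\nu}(4)$). The key structural fact is that since $\gamma$ is arc-length parametrized, $\dg_1(t)^2 + \dg_2(t)^2 = 1$ for all $t$, so $A + B = L$; this constraint is what makes the three integrals reduce to essentially two degrees of freedom.

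First I would compute $\int_0^L \langle\theta,\dg(t)\rangle^2\,dt = A\cos^2\psi + 2C\cos\psi\sin\psi + B\sin^2\psi$, and rewrite it using double-angle formulas as $\frac{A+B}{2} + \frac{A-B}{2}\cos(2\psi) + C\sin(2\psi) = \frac{L}{2} + \frac{A-B}{2}\cos(2\psi) + C\sin(2\psi)$. Squaring and integrating over $\nu$, the static condition $\mathcal{B}_{\mathcal{C}}(\nu) = L^2/4$ for all $\nu$ forces the coefficients of the nonconstant harmonics (namely $\cos(2\psi)$, $\sin(2\psi)$, $\cos(4\psi)$, $\sin(4\psi)$) to vanish as functions of $\nu$. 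Because the trigonometric moments $\widehat{\nu}(2)$ and $\widehat{\nu}(4)$ can be chosen independently and arbitrarily within their allowed ranges as $\nu$ ranges over all probability measures, the vanishing must hold coefficient by coefficient. This yields precisely $\frac{A-B}{2} = 0$ and $C = 0$, i.e. $A = B$ and $C = 0$; combined with $A + B = L$ this gives $A = B = L/2$, which is the claimed characterization.

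For the converse direction, I would substitute $A = B = L/2$ and $C = 0$ directly into the expanded expression: the inner integral collapses to the constant $L/2$ independent of $\theta$, so $\mathcal{B}_{\mathcal{C}}(\nu) = \int_{\mathcal{S}^1} (L/2)^2\, d\nu(\theta) = L^2/4$ for every $\nu$, establishing that the curve is static. This half is a short direct verification.

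The main obstacle — though it is more a matter of care than of depth — is justifying rigorously that the vanishing of $\mathcal{B}_{\mathcal{C}}(\nu) - L^2/4$ for all $\nu$ separates the harmonic coefficients, i.e. that one can realize enough distinct values of the relevant Fourier moments of $\nu$ to conclude that each coefficient vanishes individually. One clean way is to use the equivalent criterion already recorded in the excerpt (from \cite[Proposition 7.1]{rudwig}), namely that a curve is static if and only if $\mathcal{B}_{\mathcal{C}}(d\theta/2\pi) = 0$ with respect to the uniform measure; testing against the uniform measure kills all nonconstant harmonics upon integration and leaves exactly $\left(\frac{A-B}{2}\right)^2 + C^2 = 0$, which immediately forces $A = B$ and $C = 0$. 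I would favor this route since it sidesteps the need to argue about the full range of attainable moments and turns the whole lemma into a one-line trigonometric computation plus the arc-length constraint $A + B = L$.
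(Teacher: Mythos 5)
Your proof is correct and follows essentially the same route as the paper's: both reduce the static condition to the single test against the uniform measure and then exhibit the deficit $\mathcal{B}_{\mathcal{C}}(d\theta/2\pi)-L^2/4$ as a sum of squares of curve integrals that vanishes exactly under the three stated identities (the paper phrases this as the inequality $\mathcal{I}_2\geq 1/2$ with equality iff those identities hold, which is equivalent to your explicit identity $\mathcal{B}_{\mathcal{C}}(d\theta/2\pi)-L^2/4=\tfrac{1}{2}\left[\left(\tfrac{A-B}{2}\right)^2+C^2\right]$). Your converse direction --- noting that under the three conditions the inner integral is identically $L/2$, so $\mathcal{B}_{\mathcal{C}}(\nu)=L^2/4$ for every $\nu$ --- is the same verification the paper carries out via $\mathcal{I}_2=\mathcal{I}_2^{\perp}=1/2$.
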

\begin{remark}\rm 
If $\cC$ is static, then also $\int_{0}^{L}\dg_1(t)^4dt=\int_{0}^{L}\dg_2(t)^4dt$. 
\end{remark}
Let us now define
\begin{align*}
A=A_{\cC}&:=\frac{1}{L}\int_{0}^{L}\dg_1(t)^2\dg_2(t)^2dt, \qquad B=B_{\cC}:=\frac{1}{L}\int_{0}^{L}\dg_1(t)^3\dg_2(t)dt,\\
\cI_4&=\cI_{4,\cC}:=\frac{1}{L^2}\int_{0}^{L}\int_{0}^{L}\langle\dg(t),\dg(u)\rangle^4 dt du.
\end{align*}
Hence 
$$
\mathcal I'_4 = 1-8A.
$$
The following result may look technical, but it is instrumental to introduce the notion of a doubly static curve.
\begin{lemma} \label{lem:ABI}
If $\cC$ is a static curve, then we have the relation,
\begin{equation}
\label{eq:I4}
\cI_4=\frac{1}{2}+8A^2-2A+8B^2, 
\end{equation}
and the inequalities
\begin{align*}
0<A<\frac{1}{4}, \hspace{1cm} B^2<\frac{A(1-4 A) }{4}, \hspace{1cm}  \frac{3}{8}\leq\cI_4<\frac{1}{2}, \hspace{1cm}  -1<\cI_4'<1.
\end{align*}
\end{lemma}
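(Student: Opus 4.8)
The plan is to reduce everything to one-variable moments of the unit tangent $\dg=(\dg_1,\dg_2)$, using that arc-length parametrization forces $\dg_1(t)^2+\dg_2(t)^2=1$ for all $t$, together with the characterization of staticity from Lemma \ref{lem:sta}. Writing $\langle f\rangle:=\tfrac1L\int_0^L f(t)\,dt$, the static conditions read $\langle\dg_1^2\rangle=\langle\dg_2^2\rangle=\tfrac12$ and $\langle\dg_1\dg_2\rangle=0$, while $A=\langle\dg_1^2\dg_2^2\rangle$ and $B=\langle\dg_1^3\dg_2\rangle$. To get \eqref{eq:I4} I would expand $\langle\dg(t),\dg(u)\rangle^4=(\dg_1(t)\dg_1(u)+\dg_2(t)\dg_2(u))^4$ by the binomial theorem; integrating in $t$ and $u$ separately factorizes the double integral into
$$\cI_4 = \langle\dg_1^4\rangle^2 + 4\langle\dg_1^3\dg_2\rangle^2 + 6\langle\dg_1^2\dg_2^2\rangle^2 + 4\langle\dg_1\dg_2^3\rangle^2 + \langle\dg_2^4\rangle^2.$$
Then $\dg_1^4=\dg_1^2(1-\dg_2^2)$ gives $\langle\dg_1^4\rangle=\langle\dg_2^4\rangle=\tfrac12-A$, and $\dg_1^3\dg_2+\dg_1\dg_2^3=\dg_1\dg_2$ gives $\langle\dg_1\dg_2^3\rangle=-B$. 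Substituting yields $\cI_4=2(\tfrac12-A)^2+6A^2+8B^2=\tfrac12-2A+8A^2+8B^2$, which is \eqref{eq:I4}.

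For the inequalities I would pass to the tangent-angle description $\dg=(\cos\theta(t),\sin\theta(t))$; nowhere-zero curvature means $\dot\theta$ has constant sign, so $\theta$ is strictly monotone and in particular non-constant. In these terms the static conditions become $\langle\cos 2\theta\rangle=\langle\sin 2\theta\rangle=0$, and a short trigonometric computation gives $\mathcal I_4'=1-8A=\langle\cos 4\theta\rangle$ and $8B=\langle\sin 4\theta\rangle$. The key estimate is then
$$(1-8A)^2 + (8B)^2 = \langle\cos 4\theta\rangle^2 + \langle\sin 4\theta\rangle^2 = |\langle e^{4i\theta}\rangle|^2 < 1,$$
the final step being strict because $e^{4i\theta}$ is a non-constant unit-modulus function (as $\theta$ is non-constant), so its average has modulus strictly below $1$. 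Rearranging gives exactly $B^2<\tfrac{A(1-4A)}{4}$.

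The remaining bounds follow algebraically. The estimate $0<A<\tfrac14$ comes from $0\le\dg_1^2\dg_2^2\le\tfrac14$ pointwise, with strictness ruling out $\dg_1^2\dg_2^2\equiv0$ (axis-aligned, hence constant, tangent) and $\dg_1^2\dg_2^2\equiv\tfrac14$ (again constant tangent), both excluded by non-constancy of $\theta$; then $-1<\mathcal I_4'=1-8A<1$ is immediate. Finally, feeding $B^2<\tfrac{A(1-4A)}{4}=\tfrac A4-A^2$ into \eqref{eq:I4} gives $\cI_4<\tfrac12$, while $B^2\ge0$ gives $\cI_4\ge\tfrac12-2A+8A^2$, whose minimum over $A$ is attained at $A=\tfrac18$ with value $\tfrac38$; hence $\tfrac38\le\cI_4<\tfrac12$.

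The only genuinely non-routine point is the strictness of $|\langle e^{4i\theta}\rangle|<1$ (and the analogous strict bounds on $A$): each rests on the geometric hypothesis of nowhere-zero curvature, which forces the tangent angle $\theta$ to be non-constant so that the relevant unit-modulus integrand cannot be a.e.\ constant. I expect this to be the main thing to justify carefully; the identity \eqref{eq:I4} and the remaining estimates are direct computations once the moment reductions are in place.
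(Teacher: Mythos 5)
Your proof is correct. The identity \eqref{eq:I4} is obtained exactly as in the paper: expand $\langle\dg(t),\dg(u)\rangle^4$ binomially so that the double integral factorizes into a sum of squared one-variable moments, then use Lemma \ref{lem:sta} to express $\langle\dg_1^4\rangle=\langle\dg_2^4\rangle=\tfrac12-A$ and $\langle\dg_1\dg_2^3\rangle=-B$. For the inequalities, however, you reverse the paper's logical order and use a different key estimate. The paper first gets $0<A<\tfrac14$ from Cauchy--Schwarz applied to $A\le\langle\dg_1^4\rangle$ together with $\langle\dg_1^4\rangle+A=\tfrac12$, then proves $\cI_4<\tfrac12$ by the separate Cauchy--Schwarz bound $\cI_4<\cI_2=\tfrac12$ (using $|\langle\dg(t),\dg(u)\rangle|\le1$), and only then reads off $B^2<A(1-4A)/4$ as a corollary of \eqref{eq:I4}; the lower bound $\tfrac38\le\cI_4$ is delegated to Lemma \ref{lem:dou}. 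You instead establish $B^2<A(1-4A)/4$ directly from the strict Jensen-type bound $(1-8A)^2+(8B)^2=|\langle e^{4i\theta}\rangle|^2<1$ in the tangent-angle parametrization, and then deduce $\cI_4<\tfrac12$ from the identity; you also get $\tfrac38\le\cI_4$ self-containedly by minimizing the quadratic $8A^2-2A+\tfrac12$ and discarding $8B^2\ge0$, rather than citing Lemma \ref{lem:dou}. Both routes rest on the same underlying fact (an average of a unit-modulus quantity has modulus at most one, strict because nowhere-zero curvature forces the tangent direction to be non-constant), so the two arguments are essentially equivalent in content; yours is slightly more streamlined in that the whole lemma follows from \eqref{eq:I4} plus a single strict inequality, while the paper's version keeps the bound $\cI_4<\cI_2$ conceptually separate and reuses the general-curve lower bound from Lemma \ref{lem:dou}. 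Your justification of strictness (continuity plus monotonicity of $\theta$ ruling out a.e.\ constancy of $e^{4i\theta}$, and of $\dg_1\dg_2$ at the extremes of $A$) is the same geometric point the paper invokes when it excludes the case of a straight line segment.
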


The proof of Lemma \ref{lem:ABI} is postponed to Appendix \ref{AuxLem}. We can now introduce the notion of a doubly static curve.

\begin{definition}
We say the curve $\mathcal C$ doubly static if $\cI_4=3/8$.     
\end{definition}

\begin{example} Circles and semicircles are doubly static. 
\end{example}
Our characterization of doubly static curves is given in the following lemmas, also proved in Appendix \ref{AuxLem}.
\begin{lemma}
\label{lem:dou}
One has $\cI_4=3/8$ if and only if $\cC$ is static, $A=1/8$, and $B=0$; this implies also $\cI_4'=0$.
\end{lemma}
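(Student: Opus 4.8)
The plan is to prove the nontrivial "only if" implication via a harmonic expansion that is valid for \emph{every} curve, and then to obtain the reverse implication and the consequence $\cI_4'=0$ by direct substitution into the formula of Lemma \ref{lem:ABI}. The key point is that Lemma \ref{lem:ABI} only supplies information once $\cC$ is known to be static, so the first task is to show that $\cI_4=3/8$ already forces staticity.

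Since $\gamma$ is parametrized by arc length, $\dg(t)$ is a unit vector; writing $\dg(t)=(\cos\theta(t),\sin\theta(t))$ for the tangent angle $\theta(t)$, we have $\langle\dg(t),\dg(u)\rangle=\cos(\theta(t)-\theta(u))$. Using the elementary identity
\[
\cos^4 x=\frac38+\frac12\cos(2x)+\frac18\cos(4x)
\]
and expanding $\cos(2(\theta(t)-\theta(u)))$ and $\cos(4(\theta(t)-\theta(u)))$ by the addition formulas, I would rewrite
\[
\cI_4=\frac38+\frac12\left[c_2^2+s_2^2\right]+\frac18\left[c_4^2+s_4^2\right],
\]
where $c_k=\frac1L\int_0^L\cos(k\theta(t))\,dt$ and $s_k=\frac1L\int_0^L\sin(k\theta(t))\,dt$. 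This exhibits $\cI_4$ as $3/8$ plus a sum of squares, for any curve, so $\cI_4\ge 3/8$ always, with equality if and only if $c_2=s_2=c_4=s_4=0$. Next I would translate the vanishing of the second harmonic back into the static condition: since $\cos(2\theta)=\dg_1^2-\dg_2^2$ and $\sin(2\theta)=2\dg_1\dg_2$, the conditions $c_2=s_2=0$ read $\int_0^L(\dg_1^2-\dg_2^2)\,dt=0$ and $\int_0^L\dg_1\dg_2\,dt=0$; together with $\int_0^L(\dg_1^2+\dg_2^2)\,dt=L$ (unit speed), these are exactly the static conditions of Lemma \ref{lem:sta}. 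Hence $\cI_4=3/8$ forces $\cC$ to be static.

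Once $\cC$ is known to be static, I would invoke the formula $\cI_4=\frac12+8A^2-2A+8B^2$ from Lemma \ref{lem:ABI}. Setting this equal to $3/8$ and completing the square gives
\[
8\left(A-\tfrac18\right)^2+8B^2=0,
\]
so $A=1/8$ and $B=0$, completing the "only if" direction. For the converse, substituting $A=1/8$, $B=0$ into the same formula returns $\cI_4=3/8$. Finally $\cI_4'=1-8A=0$ by the relation recorded just before Lemma \ref{lem:ABI}.

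The genuinely new input, and the main obstacle, is the global lower bound $\cI_4\ge 3/8$ together with its equality characterization, since Lemma \ref{lem:ABI} is only available under the standing assumption of staticity. Establishing this via the harmonic expansion—and correctly identifying the vanishing of the second-order coefficients $c_2,s_2$ with staticity—is the crux; the remaining algebra (completing the square) and the evaluation of $\cI_4'$ are then routine.
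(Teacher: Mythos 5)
Your argument is correct, and it reaches the conclusion by a genuinely different route from the paper. The paper expands $\langle\dg(t),\dg(u)\rangle^4$ in the monomial basis, writing $\cI_4=\sum_{i=0}^{4}\binom{4}{i}\bigl[\frac{1}{L}\int_0^L\dg_1(t)^i\dg_2(t)^{4-i}dt\bigr]^2$ as in \eqref{eq:I4expand}, drops the odd-index terms, and minimizes the remaining quadratic form in the three even moments subject to the unit-speed constraint $\int\dg_1^4+\int\dg_2^4+2\int\dg_1^2\dg_2^2=L$; the equality case \eqref{eq:dou} then hands over $A=1/8$, $B=0$ and staticity directly (the latter via $\int\dg_1^2=\int\dg_1^4+\int\dg_1^2\dg_2^2=L/2$), with no appeal to Lemma \ref{lem:ABI}. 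You instead expand in the harmonic basis via $\cos^4x=\frac38+\frac12\cos 2x+\frac18\cos 4x$, which diagonalizes the problem: $\cI_4-\frac38$ becomes manifestly a sum of squares of Fourier coefficients of the tangent angle, so the lower bound and its equality characterization are immediate with no constrained optimization, and staticity is cleanly identified with the vanishing of the second harmonic. The price is that you must then fall back on the identity $\cI_4=\frac12+8A^2-2A+8B^2$ of Lemma \ref{lem:ABI} (whose proof of that formula does not depend on Lemma \ref{lem:dou}, so there is no circularity; only the bound $3/8\le\cI_4$ in that lemma is deferred to this one) and complete the square to extract $A=1/8$ and $B=0$, whereas the paper reads these off directly from \eqref{eq:dou}. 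Your trigonometric identity is in fact the same one the paper deploys later, in the proof of Lemma \ref{lem:roswig}, to show that sufficiently rotationally symmetric curves are doubly static. Both implications and the final claim $\cI_4'=1-8A=0$ check out.
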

\begin{corollary}
Let $\mathcal{C}''\subset \mathbb T^2$ be a doubly static curve. For $\delta$-separated sequences of energy levels $n \subset \{S\}$ such that $\mathcal N_n\to +\infty$
and $\widehat \mu_n(4)\to \eta$, 
\begin{equation}
    \Corr(\mathcal L_n, \mathcal Z_n(\mathcal C'')) \to 1.
\end{equation}
\end{corollary}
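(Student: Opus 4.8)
The plan is to deduce this directly from Theorem \ref{we201223} by evaluating the limit function $f_{\mathcal C'}$ at the special values of the geometric invariants that characterize a doubly static curve. Since a doubly static curve is in particular static (by definition $\cI_4 = 3/8$, which forces staticity through Lemma \ref{lem:dou}), Theorem \ref{we201223} applies in the nodal case $u=0$ and gives $\Corr(\mathcal L_n, \mathcal Z_n(\mathcal C'')) \to f_{\mathcal C''}(\eta)$; it therefore suffices to show $f_{\mathcal C''}(\eta) = 1$ for every $\eta \in [-1,1]$. This is purely an algebraic specialization, with no further probabilistic input required.

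First I would record the two scalar inputs supplied by Lemma \ref{lem:dou}: for a doubly static curve one has $\cI_4 = 3/8$ and $\cI_4' = 0$. Then I would substitute these into the formula \eqref{eq:LZ}. The numerator becomes $1 + 2\eta \cdot 0 + \eta^2 = 1 + \eta^2$. For the denominator, note $2\cI_4 - 1 = -1/4$ and $\eta \cI_4' + 1 = 1$, so the quantity under the inner square root equals $2(1-\eta^2)(-1/4) + 1 = \tfrac12(1+\eta^2)$; hence the denominator equals $\sqrt{2}\,\sqrt{1+\eta^2}\,\sqrt{\tfrac12(1+\eta^2)} = 1 + \eta^2$. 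Cancelling numerator against denominator yields $f_{\mathcal C''}(\eta) = 1$, as claimed.

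The only point requiring care — though it is not really an obstacle — is that the combination under the inner square root, namely $2(1-\eta^2)(2\cI_4 - 1) + (\eta\cI_4' + 1)^2$, involves the negative factor $2\cI_4 - 1 = -1/4$, so one should check that it remains strictly positive in order that the correlation be well-defined and the cancellation legitimate. This holds because for $\eta \in [-1,1]$ we have $1 - \eta^2 \ge 0$, whence the expression equals $\tfrac12(1+\eta^2) \ge \tfrac12 > 0$. With this verified, the corollary follows immediately from Theorem \ref{we201223}.
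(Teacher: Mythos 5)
Your proposal is correct and matches the paper's intended argument: the corollary is stated immediately after Lemma \ref{lem:dou} precisely so that one can substitute $\cI_4=3/8$ and $\cI_4'=0$ into the formula \eqref{eq:LZ} from Theorem \ref{we201223}, and your algebra (numerator $1+\eta^2$, inner radicand $\tfrac12(1+\eta^2)>0$, denominator $1+\eta^2$) is exactly right. The paper's accompanying remark offers the equivalent structural explanation that in the doubly static case the dominant fourth-chaos terms of $\mathcal L_n^0$ and $\mathcal Z_n(\mathcal C'')$ are the same random variable up to a deterministic constant, but this adds nothing your computation does not already establish.
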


\begin{lemma}[cf. {\cite[Appendix G]{roswig}}] \label{lem:roswig}
Let $\cC\subset\Tb^2$ be a smooth closed curve with nowhere $0$ curvature, invariant with respect to rotations by $2\pi/k$, for integer $k=3$ or $k\geq 5$. Then $\cC$ is doubly static.
\end{lemma}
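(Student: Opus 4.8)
The plan is to use Lemma~\ref{lem:dou}, which reduces ``doubly static'' to the three conditions that $\cC$ be static, that $A=1/8$, and that $B=0$. The key device is to recast each of these as the vanishing of a Fourier coefficient of the tangent-direction measure. Writing the unit tangent as $\dg(t)=(\cos\phi(t),\sin\phi(t))$ and letting $\nu_{\cC}$ be the pushforward of the normalized arc-length measure $dt/L$ under $t\mapsto\phi(t)\in\cS^1$, every quantity entering the three conditions has the form $\int_{\cS^1}g\,d\nu_{\cC}$ for a trigonometric polynomial $g$ in $\phi$. Using $\cos^2\phi=\tfrac12(1+\cos2\phi)$, $\cos\phi\sin\phi=\tfrac12\sin2\phi$, and the elementary expansions $\dg_1(t)^2\dg_2(t)^2=\tfrac18(1-\cos4\phi)$ and $\dg_1(t)^3\dg_2(t)=\tfrac14\sin2\phi+\tfrac18\sin4\phi$, I would check, via Lemma~\ref{lem:sta}, that $\cC$ is static if and only if $\widehat{\nu}_{\cC}(2)=0$; that $A=1/8$ if and only if $\int_{\cS^1}\cos4\phi\,d\nu_{\cC}=0$; and that, once staticity is known, $B=0$ if and only if $\int_{\cS^1}\sin4\phi\,d\nu_{\cC}=0$. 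Combining these, $\cC$ is doubly static precisely when $\widehat{\nu}_{\cC}(2)=\widehat{\nu}_{\cC}(4)=0$.

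The second step exploits the hypothesis. Rotation of the plane by the angle $2\pi/k$ is an isometry, so it preserves the arc-length parametrization and carries the unit tangent field of $\cC$ to that of its image, rotating each tangent direction by $2\pi/k$. Since $\cC$ is invariant under this rotation (and, being a simple closed curve of nowhere-zero curvature, retains its orientation), the measure $\nu_{\cC}$ is invariant under the rotation $\phi\mapsto\phi+2\pi/k$ of $\cS^1$. Feeding this invariance into the Fourier coefficients gives $\widehat{\nu}_{\cC}(m)=e^{-2\pi i m/k}\,\widehat{\nu}_{\cC}(m)$ for every $m$, whence $\widehat{\nu}_{\cC}(m)=0$ whenever $k\nmid m$. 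For $k=3$ one has $3\nmid2$ and $3\nmid4$; for $k\ge5$ one has $k\nmid2$ and $k\nmid4$ since $k>4$. In either case $\widehat{\nu}_{\cC}(2)=\widehat{\nu}_{\cC}(4)=0$, so $\cC$ is doubly static by the first step.

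The trigonometric reductions are routine; the one point demanding care is the transfer of the geometric symmetry of $\cC$ to the invariance of $\nu_{\cC}$ on $\cS^1$. Specifically, one must verify that the rotation preserves the orientation used to define the tangent direction, so that the induced action is genuinely $\phi\mapsto\phi+2\pi/k$ rather than a map mixing $\phi$ with $-\phi$, and that the reparametrization by a parameter shift leaves the pushforward of the uniform measure unchanged. This is also precisely where the excluded values $k=2$ and $k=4$ enter: for those $k$ one has $k\mid4$, so the argument no longer forces $\widehat{\nu}_{\cC}(4)=0$, consistent with the fact that curves with $2$-fold or $4$-fold symmetry need not be doubly static.
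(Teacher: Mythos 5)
Your proof is correct and rests on the same mechanism as the paper's: the identity $\cos^4 x=\tfrac38+\tfrac{\cos(2x)}{2}+\tfrac{\cos(4x)}{8}$ together with the fact that $2\pi/k$-rotational symmetry annihilates the second and fourth harmonics of the tangent-direction distribution precisely when $k\nmid 2$ and $k\nmid 4$, i.e. $k=3$ or $k\geq 5$. The only (cosmetic) difference is that you route through Lemma \ref{lem:dou} and the Fourier coefficients $\widehat{\nu}_{\cC}(2),\widehat{\nu}_{\cC}(4)$ of the pushforward tangent measure, whereas the paper applies the averaging over the $k$ rotated copies directly inside the double integral defining $\cI_4$.
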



\subsubsection{Discussion}

Our first main results on the asymptotic correlation structure among the functionals that we introduced in Section \ref{secGeometry} can be conveniently summarized in the following (symmetric) correlation matrix. As mentioned above, $\mathcal C'$ (resp. $\mathcal C''$) denotes a static (resp. doubly static) curve. 

\begin{center}
{\rm Asymptotic correlation structure, $d=2$.}
\end{center}
\begin{equation*}
\begin{matrix}
& \mathcal{L}_{n}^{0} & \mathcal{L}_{n}^{u_{1}} & \mathcal{L}_{n}^{u_{2}} & \mathcal{Z}_{n}\mathcal{(C)} & \mathcal{Z}_{n}\mathcal{(C}^{\prime }) & \mathcal{Z}_{n}\mathcal{(C}^{\prime \prime } )  \\ 
\mathcal{L}_{n}^{0} & 1 &  &  &  &  &  \\ 
\mathcal{L}_{n}^{u_{1}} & 0 & 1 &  &  &  &  \\ 
\mathcal{L}_{n}^{u_{2}} & 0 & 1 & 1 &  &  &  \\ 
\mathcal{Z}_{n}\mathcal{(C)} & 0 & 0 & 0 & 1 &  &  \\ 
\mathcal{Z}_{n}\mathcal{(C}^{\prime }) & f_{\mathcal{C}^{\prime}}(\eta ) & 0 & 0 & 0 & 1 &  \\ 
\mathcal{Z}_{n}\mathcal{(C}^{\prime \prime }) & 1 & 0 & 0 & 0 & f_{\mathcal{C}^{\prime}}(\eta ) & 1
\end{matrix}
\end{equation*}

\begin{remark}
\textup{Level curves have asymptotically full correlation at different non-zero thresholds $u_1,u_2$; this is the phenomenon noted by \cite{Wigman2012} for random spherical harmonics, using the expansion of the 2-point correlation function, and then related to the domination of the second chaos by \cite{MPRW, CM18, NourdinPeccatiRossi2019, MRossiWigman2020, MarinucciRossi21, CMR23} and others. On the other hand, similarly to what was noted earlier in \cite{MarinucciRossi21} for eigenfunctions on the sphere, Theorem \ref{thu1u2} shows that the nodal length and the boundary lengths of excursion sets at non-zero levels are asymptotically fully uncorrelated in the high-energy regime. This can be interpreted as a spurious effect: boundary lengths at non-zero levels are dominated by the second-order chaos, which is proportional to the random norm of the eigenfunctions, and the latter of course has no impact on the nodal length (which is invariant to normalizations).  }
\end{remark}

\begin{remark}\rm It is important to note that the correlation between boundary length and the number of nodal intersections with static or non-static curves is always zero in the asymptotic limit, excluding the nodal case (i.e., $u=0$). However, the mechanism here is  different than what we observed in the previous remark: indeed, the second-order chaos component in the intersection of the nodal length with a non-static curve does \emph{not} vanish, although it is still uncorrelated with the random norm of the eigenfunctions, which dominates the behaviour of the boundary length. See the proof of Theorem \ref{19:07} for more details.
On the other hand for intersections with static curves the second order chaos is of lower order, (see \cite{roswig}, Section 2.1) and therefore the asymptotic correlation with the boundary lengths is obviously zero. In some sense, intersections with static curves have some form of invariance with respect to normalization factors for the Arithmetic Random Waves, and this makes their behaviour somewhat analogous to nodal lines, see our following discussion on partial autocorrelation results.     
\end{remark}

\begin{remark}\rm 
 In the special case where $\eta=0$ the limiting spectral measure is Lebesgue, i.e. lattice points are equidistributed in the limit. In these circumstances, we have 
\[
 \mathrm{Corr}(\Lc_n^0,\Zc_n(\mathcal{C}^{\prime} ))  \to \frac{1}{\sqrt{2}} \frac{1}{\sqrt{4\cI_4-1}} ;
\]
this result can be compared to Theorem \ref{theo:3D} below for the three-dimensional case. At the other extreme we have $\eta=\pm 1$, where the limiting measure is Cilleruelo or tilted Cilleruelo, namely the spectral measure exhibits the maximal concentration. In these cases 
$\mathrm{Corr}( \Lc_n^0,\Zc_n( \mathcal{C}^{\prime} ) )  \to 1$ for any static curve $\cC'$.   
\end{remark}
\begin{remark}\rm 
 We have that $\mathrm{Corr}(\Lc_n^0,\Zc_n(\mathcal{C}^{\prime} ))  >0$ for any $\eta$ and any static curve $\cC'$.
\end{remark}
\begin{remark}\rm 
 In the doubly static case, the dominant terms in $\Lc^0[4]$ and $\Zc(\mathcal{C}^{''})[4]$ coincide, up to a factor depending on the energy and $L$, so that we get full correlation for any $\eta\in[-1,1]$. More explicitly,  $\mathcal{L}_{n}^{0}$ and $\mathcal{Z}_{n}(\mathcal{C}^{\prime \prime })$ are asymptotically the same random variable up to a constant. Once again, we recall that circles and semicircles are doubly static.
\end{remark}

\begin{remark}\rm 
 Let $\cC'$ be a fixed toral curve, static, but not doubly static.  Then (as noted above) we have $f_{\cC'}(\eta)=1$ for $\eta=\pm 1$, and $f_{\cC'}(\eta)=0$ has exactly one solution for $\eta\in(-1,1)$.   
\end{remark}

\begin{remark}
\textup{ Above we consider the case $\eta=\pm 1$ under the assumption of well separated sequences of eigenvalues. We stress that it is possible that $\eta=\pm 1$ may not be attainable under the well separated assumption.} 
\end{remark}

\subsection{The partial correlation structure in the $2$-dimensional case}

To get deeper insights into the correlation structure for the geometry of level sets of Arthmetic Random Waves, it is of greater interest to get rid of the effect of the random $L^2$ norm of the eigenfunctions. More precisely, it is of interest to investigate the so-called partial correlation structure, where the effect of the fluctuations in the eigenfunctions norm is removed. 

Let $X,Y,Z$ be square integrable random variables; we define the partial correlation coefficient between $X$ and $Y$ conditional on $Z$ as 
\begin{equation*}
    \Corr_Z (X, Y) := \Corr(X^*, Y^*),
\end{equation*}
where $X$ and $Y$ are the residuals after projecting $X, Y$ onto the explanatory variable $Z$.

In analogous circumstances, it was shown in \cite{MarinucciRossi21} that for random spherical harmonics perfect autocorrelation holds in the high energy limit (see also \cite{CM22} for a similar result on critical points). For Arithmetic Random Waves, the correlation structure is much more subtle, as detailed below.
    \begin{proposition}\label{prop1}
Let $u_1, u_2\in \mathbb R$, then for subsequences of energy levels $\lbrace n\rbrace\subset S$ such that $\widehat{\mu_{n}}(4)\to \eta\in [-1,1]$ as ${\cal N}_n\to +\infty$ we have 
\begin{eqnarray*}
\Cov (\mathcal L_{n}^{u_1}[4], \mathcal L_{n}^{u_2}[4]) \sim \phi(u_1)\phi(u_2) \frac{\pi}{4} \frac{E_n}{\mathcal N_n^2}\left (2a(u_1)a(u_2) - \frac14 (a(u_1) + a(u_2)) + \frac{3+\eta^2}{8^2}\right ).
\end{eqnarray*}
\end{proposition}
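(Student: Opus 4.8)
The plan is to work directly from the second (``equivalently'') expression for $\mathcal L_n^u[4]$, which displays it as a deterministic prefactor times a quadratic form in the independent, centred variables $\xi_\lambda := |a_\lambda|^2 - 1$, $\lambda\in\Lambda_n^+$, plus a constant and a remainder $o_{\mathbb P}(1)$. Since the additive constant $-(a(u)-\tfrac14)$ does not contribute to a covariance, and (modulo the control of the remainder discussed below) the fluctuating part of $\mathcal L_n^u[4]$ is
\[
\phi(u)\sqrt{\tfrac{\pi}{2}}\,\frac{\sqrt{E_n/2}}{\mathcal N_n}\cdot\frac18\,Q_u, \qquad Q_u:=\frac{1}{\mathcal N_n/2}\sum_{\lambda,\lambda'\in\Lambda_n^+}\xi_\lambda\xi_{\lambda'}\,c_u(\lambda,\lambda'),
\]
with $c_u(\lambda,\lambda')=8a(u)-2\langle \lambda/|\lambda|,\lambda'/|\lambda'|\rangle^2$, the whole problem reduces to evaluating $\Cov(Q_{u_1},Q_{u_2})$ to leading order in $\mathcal N_n$. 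Collecting prefactors gives $\Cov(\mathcal L_n^{u_1}[4],\mathcal L_n^{u_2}[4]) = \phi(u_1)\phi(u_2)\,\frac{\pi}{2}\,\frac{E_n/2}{\mathcal N_n^2}\,\frac{1}{64}\,\Cov(Q_{u_1},Q_{u_2})(1+o(1))$, so it remains only to show $\Cov(Q_{u_1},Q_{u_2})\to 128\,a(u_1)a(u_2)-16(a(u_1)+a(u_2))+3+\eta^2$.

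First I would record the moments of the $\xi_\lambda$. Since the $a_\lambda$, $\lambda\in\Lambda_n^+$, are independent standard complex Gaussians, the $|a_\lambda|^2$ are i.i.d. $\mathrm{Exp}(1)$, whence $\mathbb E[\xi_\lambda]=0$, $\mathbb E[\xi_\lambda^2]=1$ and $\mathbb E[\xi_\lambda^4]=9$, so $\Var(\xi_\lambda^2)=8$. Applying the standard covariance formula for quadratic forms in i.i.d. centred variables (the cross terms between the diagonal and off-diagonal parts vanish because $\mathbb E[\xi_\lambda]=0$), one gets
\[
\Cov(Q_{u_1},Q_{u_2}) = 8\Big(\tfrac{2}{\mathcal N_n}\Big)^2\sum_{\lambda\in\Lambda_n^+}c_{u_1}(\lambda,\lambda)c_{u_2}(\lambda,\lambda) + 2\Big(\tfrac{2}{\mathcal N_n}\Big)^2\sum_{\lambda\ne\lambda'}c_{u_1}(\lambda,\lambda')c_{u_2}(\lambda,\lambda').
\]
The diagonal sum has $\mathcal N_n/2$ terms and is therefore $O(1/\mathcal N_n)$, hence negligible; the off-diagonal sum is the leading contribution.

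To evaluate the off-diagonal sum I would expand $c_{u_1}c_{u_2}=64\,a(u_1)a(u_2)-16(a(u_1)+a(u_2))\,p+4\,p^2$ with $p=\langle \lambda/|\lambda|,\lambda'/|\lambda'|\rangle^2$, and reduce to the three sums $\sum_{\lambda\ne\lambda'}1$, $\sum_{\lambda\ne\lambda'}p$, $\sum_{\lambda\ne\lambda'}p^2$. Writing $p=\cos^2(\phi_\lambda-\phi_{\lambda'})$ (angles of $\lambda,\lambda'$) and using $\cos^2\psi=\tfrac12(1+\cos2\psi)$ and $\cos^4\psi=\tfrac38+\tfrac12\cos2\psi+\tfrac18\cos4\psi$, each sum over $\Lambda_n^+$ is converted to a sum over all of $\Lambda_n$ (a factor $\tfrac14$, since the summand is invariant under $\lambda\mapsto-\lambda$ and $\lambda'\mapsto-\lambda'$), and then expressed through $\widehat\mu_n(2),\widehat\mu_n(4)$ via $\sum_{\lambda,\lambda'\in\Lambda_n}\cos(k(\phi_\lambda-\phi_{\lambda'}))=\mathcal N_n^2|\widehat\mu_n(k)|^2$. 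Using $\widehat\mu_n(2)=0$ (from the $z\mapsto iz$ invariance of $\mu_n$) and $\widehat\mu_n(4)\to\eta$, and discarding the $O(\mathcal N_n)$ diagonal corrections, this yields to leading order $\sum_{\lambda\ne\lambda'}1\sim\mathcal N_n^2/4$, $\sum_{\lambda\ne\lambda'}p\sim\mathcal N_n^2/8$, and $\sum_{\lambda\ne\lambda'}p^2\sim\frac{\mathcal N_n^2}{4}\big(\frac38+\frac18\eta^2\big)$. Substituting gives exactly $\Cov(Q_{u_1},Q_{u_2})\to 128\,a(u_1)a(u_2)-16(a(u_1)+a(u_2))+3+\eta^2$, and the stated asymptotics follow after reinserting the prefactor $\frac{\pi E_n}{256\,\mathcal N_n^2}$ and writing $\frac{1}{256}=\frac14\cdot\frac1{64}$ with $64=8^2$.

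The main obstacle is the passage from the representation with its $o_{\mathbb P}(1)$ remainder to an honest statement about covariances, since convergence in probability does not by itself control second moments. I would handle this by noting that $\mathcal L_n^u[4]$ lives in the fixed fourth Wiener chaos, where hypercontractivity makes all $L^p$ norms equivalent; this upgrades the $o_{\mathbb P}(1)$ to $o_{L^2}(1)$ after normalization, so that the cross-covariances between the leading quadratic form and the remainder are of lower order, as was already quantified in the $L^2$-estimates of \cite{MPRW} and \cite{CMR23}. The remaining steps are routine trigonometric bookkeeping.
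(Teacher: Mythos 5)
Your argument is correct and all the constants check out: the quadratic-form covariance formula with $\operatorname{Var}(\xi_\lambda^2)=8$ for $\xi_\lambda=|a_\lambda|^2-1\sim \mathrm{Exp}(1)-1$, the negligibility of the diagonal sum, the evaluation of the off-diagonal sums via $\widehat\mu_n(2)=0$ and $|\widehat\mu_n(4)|^2\to\eta^2$, and the limit $\Cov(Q_{u_1},Q_{u_2})\to 128\,a(u_1)a(u_2)-16(a(u_1)+a(u_2))+3+\eta^2$ combined with the prefactor $\pi E_n/(256\,\mathcal N_n^2)$ reproduce the statement exactly. The route differs from the paper's in organization rather than substance: the paper starts from the first representation of $\mathcal L_n^u[4]$ and writes the covariance as
$a(u_1)a(u_2)\Var(W_1(n)^2)+[a(u_1)+a(u_2)]\Cov\big(W_1(n)^2,-\tfrac14W_2(n)^2-\tfrac14W_3(n)^2-\tfrac12W_4(n)^2\big)+\Var\big(-\tfrac14W_2(n)^2-\tfrac14W_3(n)^2-\tfrac12W_4(n)^2\big)$,
evaluating the three limits ($2$, $-\tfrac14$, $\tfrac{3+\eta^2}{64}$) from the asymptotic covariance structure of $(W_1,\dots,W_4)$ in \cite[Lemma 4.3]{MPRW} together with Gaussian moment identities; the arithmetic input there, namely $\tfrac{1}{n^2\mathcal N_n}\sum_\lambda\lambda_i^4\to\tfrac{3+\eta}{8}$ and $\tfrac{1}{n^2\mathcal N_n}\sum_\lambda\lambda_1^2\lambda_2^2\to\tfrac{1-\eta}{8}$, is equivalent to your Fourier-coefficient bookkeeping with $\cos^4\psi$. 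What your version buys is that it is an exact finite-$n$ second-moment computation, so you never pass moment convergence through a distributional limit of the $W_i$; you also make explicit the upgrade of the $o_{\mathbb P}(1)$ remainder to $o_{L^2}(1)$ via hypercontractivity on a fixed chaos, a point the paper's proof leaves implicit (and which is in any case covered by the quantitative $L^2$ estimates of \cite{MPRW} and \cite{CMR23}). What the paper's version buys is brevity and direct reuse of the covariance table for the $W_i$ already established in \cite{MPRW}.
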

Note that for $u_1=u_2=0$, since $a(0)=\frac18$, we retrieve 
\begin{equation*}
    \Var(\mathcal L_n[4]) \sim \frac{4 \pi^2 n}{\mathcal N_n^2}\frac{1+\eta^2}{512},
\end{equation*}
as expected.  
\begin{theorem}\label{prop1u}
Let $u\in \mathbb R$, for a $\delta$-separated subsequences of energies $\{n\}\subset S$ such that ${\cal N}_{n}\to\infty$ and $\widehat{\mu}_{n}(4)\to \eta\in [-1,1]$, we have
\begin{align*}
 \Cov(\mathcal L_{n}^{u}[4], \mathcal Z_n(\mathcal{C}^{\prime})[4])  &\sim \phi(u)\sqrt{\frac{\pi}{2}} \frac{\sqrt{E_n/2}}{\mathcal N_n}   \frac{\sqrt{2n}}{4 \mathcal N_n} L    \frac 1{16} \Big[ 1+ 2 \eta \mathcal{I}'_4 + \eta^2 \Big].
\end{align*}
\end{theorem}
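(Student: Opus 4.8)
The plan is to compute the covariance $\Cov(\mathcal L_n^u[4], \mathcal Z_n(\mathcal C')[4])$ directly from the explicit chaos expansions provided in Section \ref{sec_chaos}. Both fourth-chaos projections are expressed as quadratic forms in the family $\{|a_\lambda|^2 - 1\}_{\lambda \in \Lambda_n^+}$; more precisely, $\mathcal L_n^u[4]$ is (up to the prefactor $\phi(u)\sqrt{\pi/2}\,\sqrt{E_n/2}/\mathcal N_n$) a double sum with kernel proportional to $8a(u) - 2\langle \lambda/|\lambda|, \lambda'/|\lambda'|\rangle^2$, and the static-curve projection $\mathcal Z_n^a(\mathcal C')[4]$ is (up to $\sqrt{2n}\,L/(4\mathcal N_n)$) a double sum with kernel $1 - 4I_{\lambda,\lambda'}(2,2)$ plus a deterministic diagonal term. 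Since $\mathcal Z_n(\mathcal C')[4] = \mathcal Z_n^a(\mathcal C')[4] + \mathcal Z_n^b(\mathcal C')[4]$ with $\Var(\mathcal Z_n^b) = o(\Var(\mathcal Z_n^a))$, by Cauchy--Schwarz the contribution of $\mathcal Z_n^b$ to the covariance is negligible, so it suffices to work with $\mathcal Z_n^a(\mathcal C')[4]$.

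First I would reduce the problem to evaluating $\E\big[\big(\sum_{\lambda,\lambda'} (|a_\lambda|^2-1)(|a_{\lambda'}|^2-1) G(\lambda,\lambda')\big)\big(\sum_{\mu,\mu'}(|a_\mu|^2-1)(|a_{\mu'}|^2-1)H(\mu,\mu')\big)\big]$ for the two symmetric kernels $G$ and $H$ read off above. Using the independence structure of the $a_\lambda$ (recall $a_\lambda \perp a_\gamma$ unless $\gamma \in \{\lambda,-\lambda\}$) together with $\E[(|a_\lambda|^2-1)^2]=1$ and the vanishing of odd moments, only the diagonal pairings survive, and this collapses the expectation to a sum of the form $\frac{c}{(\mathcal N_n/2)^2}\sum_{\lambda,\lambda' \in \Lambda_n^+} G(\lambda,\lambda')H(\lambda,\lambda')$, i.e. essentially $\E[\,\overline{G(\lambda,\lambda')H(\lambda,\lambda')}\,]$ against the empirical pair measure on $\Lambda_n^+ \times \Lambda_n^+$.

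The next step is to replace these empirical sums over lattice points by integrals against the limiting spectral data. The key quantities that appear are the moments $\frac{1}{\mathcal N_n}\sum_\lambda \langle \lambda/|\lambda|, \cdot\rangle$-type averages, which under $\widehat\mu_n(4)\to\eta$ converge to the corresponding expressions in $\eta$; here I would use the $\delta$-separation hypothesis precisely as in \cite{roswig} to guarantee that the relevant double sums $\frac{1}{(\mathcal N_n/2)^2}\sum_{\lambda,\lambda'} \langle \lambda/|\lambda|,\lambda'/|\lambda'|\rangle^{2k}$ converge to their expected limits and that cross terms involving $I_{\lambda,\lambda'}(2,2)$ separate correctly. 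After substituting the values $I_{\lambda,\lambda'}(2,0)=1/2$ (valid on static curves by Lemma \ref{lem:sta}) and expanding $a(u) = \frac14 H_4(u) + \frac12 H_2(u) - \frac18$, the surviving terms should organize into the factor $1 + 2\eta\mathcal I_4' + \eta^2$ times the numerical constant $1/16$, producing exactly the claimed asymptotic $\phi(u)\sqrt{\pi/2}\,\frac{\sqrt{E_n/2}}{\mathcal N_n}\,\frac{\sqrt{2n}}{4\mathcal N_n}L\,\frac1{16}[1 + 2\eta\mathcal I_4' + \eta^2]$.

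The main obstacle I anticipate is the bookkeeping in the integral-limit step: one must carefully track how the angular averages $I_{\lambda,\lambda'}(2,2)$ of the curve interact with the spectral average over $\lambda,\lambda'$, and in particular isolate the piece that converges to the curve invariant $\mathcal I_4'$ (via the relation $\mathcal I_4' = 1 - 8A$ and the definition \eqref{I'4}) while showing the remaining terms either cancel or vanish in the limit. The identity relating $\int_0^L(\dg_1^4 + \dg_2^4 - 6\dg_1^2\dg_2^2)\,dt$ to the combination produced by the kernel product is the crux, and verifying that the $\eta$-linear cross term indeed assembles into $2\eta\mathcal I_4'$ (rather than some other curve functional) will require the static-curve relations of Lemma \ref{lem:sta} and the remark that $\int_0^L \dg_1^4 = \int_0^L \dg_2^4$ for static curves. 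The $\delta$-separation is what makes these limit interchanges rigorous, so I would invoke it explicitly at each passage from sum to integral.
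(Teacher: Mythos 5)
Your plan is correct and follows essentially the same route as the paper: both reduce the covariance to Wick pairings of the two quadratic forms in $\{|a_\lambda|^2-1\}$, collapse it to deterministic double sums of the product of the two kernels over $\Lambda_n\times\Lambda_n$, and evaluate those sums via the lattice-point moment asymptotics under $\widehat\mu_n(4)\to\eta$ together with the static-curve integral identities (the paper packages this last step as Lemma \ref{lemI}, whose quantities $\mathcal I$ and $\mathcal J$ are exactly the curve averages you describe, with $-\tfrac14+2\mathcal J=\tfrac1{16}[1+2\eta\mathcal I_4'+\eta^2]$). The only cosmetic difference is that you work with the single-kernel form $8a(u)-2\langle\lambda/|\lambda|,\lambda'/|\lambda'|\rangle^2$ of $\mathcal L_n^u[4]$ while the paper splits it into the $W_i(n)$ components, which the paper itself states are equivalent.
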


\subsubsection{Discussion}
For 2-dimensional Arithmetic Random Waves, the following asymptotic partial correlation structure holds
\begin{center}
{\rm Partial Correlation structure, dimension $d=2$.} 
\end{center}
\begin{equation*}
\begin{matrix}
& \mathcal{L}_{n}^{0} & \mathcal{L}_{n}^{u_{1}} & \mathcal{L}_{n}^{u_{2}} & \mathcal{Z}_{n}\mathcal{(C)} & \mathcal{Z}_{n}\mathcal{(C}^{\prime }) & \mathcal{Z}_{n}\mathcal{(C}^{\prime \prime }) \\ 
\mathcal{L}_{n}^{0} & 1 &  &  &  &  &  \\ 
\mathcal{L}_{n}^{u_{1}} & M(0,u_{1};\eta ) & 1 &  &  &  &  \\ 
\mathcal{L}_{n}^{u_{2}} & M(0,u_{2};\eta ) & M(u_{1},u_{2};\eta ) & 1 &  &  &  \\ 
\mathcal{Z}_{n}\mathcal{(C)} & 0 & 0 & 0 & 1 &  &  \\ 
\mathcal{Z}_{n}\mathcal{(C}^{\prime }) & f_{\mathcal C'}(\eta) & f_{\mathcal C'}(u_1;\eta) & f_{\mathcal C'}(u_2;\eta) & 0 & 1 &  \\ 
\mathcal{Z}_{n}\mathcal{(C}^{\prime \prime }) & 1 & M(0,u_{1};\eta ) & M(0,u_{2};\eta ) & 0 & f_{\mathcal C'}(\eta) & 1 
\end{matrix}
\end{equation*}
where for $u\in \mathbb R$
\begin{align*}
f_{\mathcal C'}(u;\eta) := \frac{       \frac {\sqrt 2}{16} \Big[ 1+ 2 \eta \mathcal{I}'_4 + \eta^2 \Big]}{\sqrt{ (2a(u)^2-\frac12 a(u)+\frac{3+\eta^2}{8^2})} \cdot     \sqrt{2(1-\eta^2)(2\cI_4-1)+(\eta\cI_4'+1)^2}},
\end{align*} 
(note that $f_{\mathcal C'}(\eta)=f_{\mathcal C'}(0;\eta)$   as in \eqref{eq:LZ} since $a(0) = \frac18$). Moreover,  
\begin{equation*}
M(u_{1},u_{2};\eta )=\frac{\left\{ 2a(u_{1})a(u_{2})-\frac{1}{4}(a(u_{1})+a(u_{2}))+\frac{3+\eta
^{2}}{8^2}\right\} }{\sqrt{\left\{ 2a^{2}(u_{1})-\frac{1}{2}a(u_{1})+\frac{ 3+\eta ^{2}}{8^2}\right\} \left\{ 2a^{2}(u_{2})-\frac{1}{2}a(u_{2})+\frac{ 3+\eta ^{2}}{8^2}\right\} }}, 
\end{equation*}
for
\begin{equation*}
a(u)=\frac{H_{4}(u)}{4}+\frac{H_{2}(u)}{2}-\frac{1}{8}\text{ .}
\end{equation*}
Note that 
\begin{equation*}
    M(0,u;\eta ) = f_{\mathcal C''}(u;\eta),
    \end{equation*}
    where $\mathcal C''$ is a doubly static curve. 

\begin{remark}
\textup{The rationale behind the previous correlation table can be explained as follows. Considering partial correlation, the second-order chaos term disappears from the boundary lengths measure at non-zero thresholds. As a consequence their correlation with the intersections for non static curves becomes zero, because the latter is dominated by the second-order chaos. On the other hand, for static or doubly static curves the second-order chaos is lower order, hence the partial correlation becomes basically the correlation between the fourth-chaos components of intersections and boundary lengths. 
}
\end{remark}

An important consequence of the previous results is the existence of {\it resonant pairs}, that is, sets of threshold levels with asymptotically full correlation between boundary length and/or nodal intersections; this is illustrated in the following corollary.

\begin{corollary}\label{Cor:Correlation Curves} We have that 
\begin{align*}
\lim_{n \rightarrow \infty}{\rm Corr}(\mathcal{L}_{n}^{u_{1}},\mathcal{L}_{n}^{u_{2}})=1, 
\end{align*}
if and only if
\begin{equation}\label{fullcorrelationcurve}
u_2^{4}-4u_2^{2}=u_1^{4}-4u_1^{2}. 
\end{equation}
\end{corollary}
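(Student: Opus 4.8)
The quantity in the corollary is the \emph{partial} correlation (the ordinary correlation is already settled by Theorem~\ref{thu1u2}, whose dichotomy $\{0,1\}$ plainly does not produce the algebraic curve \eqref{fullcorrelationcurve}); concretely, its limit is the entry $M(u_1,u_2;\eta)$ of the partial correlation table. So the plan is to determine exactly when $M(u_1,u_2;\eta)=1$. First I would record that, after projecting out the random $L^2$ norm (which carries the relevant second-chaos content), the residuals of $\mathcal L_n^{u_1}$ and $\mathcal L_n^{u_2}$ are governed by their fourth chaotic components; hence the limiting partial correlation equals $\lim_n \Corr(\mathcal L_n^{u_1}[4],\mathcal L_n^{u_2}[4])$. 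Dividing the covariance of Proposition~\ref{prop1} by the product of the two standard deviations (the $u_1=u_2$ specialization of the same proposition) gives precisely $M(u_1,u_2;\eta)$ in the limit, so it suffices to analyze this elementary function of $u_1,u_2,\eta$.

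The key observation is that $M$ is a cosine. Writing $c:=\frac{3+\eta^2}{8^2}$ and $a_i:=a(u_i)$, I would introduce the planar vectors
\[
v_i:=\Bigl(\sqrt{2}\,a_i-\tfrac{1}{4\sqrt2},\ \gamma\Bigr),\qquad \gamma:=\sqrt{c-\tfrac{1}{32}}=\frac{\sqrt{1+\eta^2}}{8}.
\]
A direct expansion gives $\langle v_1,v_2\rangle=2a_1a_2-\frac14(a_1+a_2)+c$ and $\|v_i\|^2=2a_i^2-\frac12 a_i+c$, so that $M(u_1,u_2;\eta)=\langle v_1,v_2\rangle/(\|v_1\|\,\|v_2\|)=\cos\angle(v_1,v_2)$. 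By Cauchy--Schwarz $M\le 1$, with equality if and only if $v_1$ and $v_2$ are positively proportional.

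It then remains to unwind the equality case and translate it back to $u_1,u_2$. Since $\gamma=\frac{\sqrt{1+\eta^2}}{8}>0$ for every $\eta\in[-1,1]$, the two vectors share the \emph{same} strictly positive second coordinate; a positive proportionality $v_2=t\,v_1$ therefore forces $t=1$, i.e.\ $v_1=v_2$, which is equivalent to $a(u_1)=a(u_2)$. Finally I would evaluate $a(u)$ explicitly from $H_2(u)=u^2-1$ and $H_4(u)=u^4-6u^2+3$, obtaining
\[
a(u)=\tfrac14 u^4-u^2+\tfrac18=\tfrac14(u^4-4u^2)+\tfrac18;
\]
hence $a(u_1)=a(u_2)$ if and only if $u_1^4-4u_1^2=u_2^4-4u_2^2$, which is exactly \eqref{fullcorrelationcurve}. (As a sanity check, $u_1=0$ forces $u_2\in\{0,\pm2\}$, recovering the nodal resonant pairs.)

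The only genuinely substantive point is recognizing the Gram/cosine structure of $M$, together with the observation that the fixed second coordinate $\gamma$ is nonzero for all admissible $\eta$: this is what reduces full partial correlation to the $\eta$-independent identity $a(u_1)=a(u_2)$ and simultaneously rules out any spurious degenerate solutions. Everything else---the identification of the limiting partial correlation with $M$ via Proposition~\ref{prop1}, and the Hermite evaluation of $a$---is routine.
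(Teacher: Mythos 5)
Your proposal is correct, and it reaches the conclusion by a genuinely different route from the paper. The paper's proof is a direct polynomial verification: it writes out $\mathrm{Cov}(\mathcal{L}_{n}^{u_1},\mathcal{L}_{n}^{u_2})$, $\mathrm{Var}(\mathcal{L}_{n}^{u_1})$, $\mathrm{Var}(\mathcal{L}_{n}^{u_2})$ explicitly as polynomials in $u_1,u_2$ (after substituting $a(u)=\tfrac14 u^4-u^2+\tfrac18$), and then checks by brute-force expansion the identity
\begin{equation*}
\Bigl(\mathrm{Cov}\Bigr)^2-\mathrm{Var}\cdot\mathrm{Var}=-\frac{1+\eta^2}{512}\bigl(u_1^4-4u_1^2-u_2^4+4u_2^2\bigr)^2,
\end{equation*}
from which the characterization follows because $1+\eta^2>0$. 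Your Gram/cosine argument explains \emph{why} this identity holds: the left-hand side is exactly $-\bigl(\det(v_1\,|\,v_2)\bigr)^2=-2\gamma^2\bigl(a(u_1)-a(u_2)\bigr)^2$ for your vectors $v_i$, which recovers the paper's right-hand side on the nose. Your version buys conceptual clarity (Cauchy--Schwarz with a transparent equality case, plus the observation that the common second coordinate $\gamma>0$ rules out negative proportionality, hence also excludes $M=-1$), and it reduces the whole problem to the $\eta$-independent condition $a(u_1)=a(u_2)$ before any polynomial algebra is done; the paper's version is more pedestrian but self-contained and requires no reorganization of $M$ into an inner-product form. One minor remark: like the paper, you must read the corollary as a statement about the partial correlation (equivalently, the correlation of the fourth-order chaotic projections), since the raw correlation is already settled by Theorem \ref{thu1u2}; you flag this explicitly, which is consistent with the paper's placement of the corollary in the partial-correlation section and with its own proof, which works with the same $a(u)$-covariances from Proposition \ref{prop1}.
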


\begin{example}
For $u_1=0$ we obtain that the nodal lines (and the interesections with doubly static curves) have asymptotically perfect correlations with the levels $u_2=\pm 2$. For $u_1=1$ we obtain $u_2^{4}-4u_2^{2}+3=0,$ with solutions $u_2=%
\sqrt{3},-\sqrt{3},-1,1,$ so that resonant pairs are given by $(1,3),(1,-3)$ and $(1,-1)$.    
\end{example}

We call \eqref{fullcorrelationcurve} the \emph{Full Correlation Curve} for boundary lengths of Arithmetic Random Waves. We believe that analogous algebraic curves characterize full correlation pairs for other geometric functionals, such as Lipschitz-Killing curvatures and critical values. We leave the investigation of this for future research.

\subsection{Some results in the 3-dimensional case}\label{3D-results}

It is of obvious interest to investigate the existence of a similar correlation structure for higher-dimensional arithmetic random waves (as studied for instance in \cite{Cam19, benmaf, RWY16}). For brevity's sake, we do not explore fully this possibility here; we focus just on a special case, that is in 3 dimensions the correlation between the nodal area ($\mathcal{A}_{n}=\mathcal{A}_{n}^{0}$) and the length of intersections of the nodal area with static surfaces or doubly-static surfaces ($\mathcal{M}_{n}(\Sigma ^{\prime })$, $\mathcal{M}_{n}(\Sigma ^{\prime \prime }))$. Let us first define 
$$
\mathcal A = \mathcal A_n := \mathcal H^2 (\lbrace x\in \mathbb T^3 : T_n(x)=0\rbrace), 
$$
and, for $\Sigma\subset \mathbb T^3$  a fixed compact regular toral surface, of finite area $A:=|\Sigma|$,
$$
\mathcal M = \mathcal M_n(\Sigma) :=  \mathcal H^1 (\lbrace x\in \mathbb T^3 : T_n(x)=0, x\in \Sigma\rbrace).
$$ 
From \cite[Theorem 1.2]{benmaf}, as $n\to +\infty$, $n\not\equiv 0,4,7 (\mathrm{mod}\; 8)$,
\begin{equation*}
    \Var(\mathcal A_n) \sim \frac{32}{375}\frac{n}{\mathcal N_n^2}.
\end{equation*}
Assume that $\Sigma$ admits a smooth normal vector locally, and call $n(\sigma)$ the unit normal vector to $\Sigma$ at the point $\sigma$. For $k\geq 0$ even, call
\begin{equation}
\cI_k=\cI_{k,\Sigma}:=\frac{1}{A^2}\iint_{\Sigma^2} \langle n(\sigma), n(\sigma')\rangle^k\,d\sigma d\sigma'.
\end{equation}

\begin{definition}
We call $\Sigma$ of nowhere $0$ Gauss-Kronecker curvature {\rm static} if $\cI_2=1/3$, and {\rm doubly static} if $\cI_4=1/5$.    
\end{definition}

\begin{remark}
\textup{ Also for surfaces doubly static implies static (see Lemma \ref{lem:dou3d} below). Simple examples of doubly static surfaces are the sphere and hemisphere.}
\end{remark}
From \cite{mafros}, we have as $n\to +\infty$, $n\not\equiv 0,4,7 (\mathrm{mod}\; 8)$, along a well separated\footnote{See Definition 1.6 in \cite{mafros}.} sequence of eigenvalues
\begin{equation}
    \Var(\mathcal M_n) \sim \frac{\pi^2}{9600} \frac{n}{N^2} (81 \mathcal I_4+35 A^2).
\end{equation}

\begin{remark}
\label{rem:sta}
\textup { Any surface $\Sigma$ of finite area and nowhere $0$ Gauss-Kronecker curvature, invariant with respect to any permutation and sign change of the coordinates is static. To see this, note that under this condition $\Sigma$ verifies the criterion for staticity given by Lemma \ref{lem:sta3d} below. For instance, $\Sigma$ may be given piecewise by symmetric trivariate polynomials where all variables appear to even powers (as long as the assumption on the curvature is met everywhere).}
\end{remark}

Our main result is the following. 

\begin{theorem}
\label{theo:3D}
Let $\mathcal A$ be the nodal area and $\mathcal M$ the nodal intersection length. For static surfaces $\Sigma$ of area $A$, we have as $n\to\infty$, $n \not\equiv 0,4,7 \pmod 8$ along a well separated sequence of eigenvalues
\begin{equation}
\label{cov3d}
\Cov(\mathcal{A}_n,\mathcal M_n)\sim\frac{n}{\Nc^2}\cdot\frac{8\pi A}{375}, 
\end{equation}
so that
\begin{equation*}
\Corr(\mathcal A_n,\mathcal M_n) \longrightarrow \frac{16}{\sqrt{405\cdot\cI_4+175}}. 
\end{equation*}
\end{theorem}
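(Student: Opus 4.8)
The plan is to proceed, as throughout the paper, via the Wiener chaos expansions of the two functionals and to isolate their dominant (fourth-order) chaotic components. Both $\mathcal{A}_n$ and $\mathcal{M}_n$ are nodal ($u=0$) functionals, hence even in $T_n$, so only even-order chaoses appear; moreover the three-dimensional Berry cancellation gives $\mathcal{A}_n[2]=0$. By orthogonality of the Wiener chaoses,
$$\Cov(\mathcal{A}_n,\mathcal{M}_n)=\sum_{q\ge 2}\Cov(\mathcal{A}_n[q],\mathcal{M}_n[q])=\Cov(\mathcal{A}_n[4],\mathcal{M}_n[4])+\text{(higher even chaos)},$$
the cross term at the level of the second chaos vanishing because $\mathcal{A}_n[2]=0$ (so the possibly non-negligible $\mathcal{M}_n[2]$ plays no role in the covariance). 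It therefore suffices to show that the fourth-chaos covariance dominates and to evaluate it.

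First I would recall the explicit fourth-chaos projections. From \cite{benmaf}, $\mathcal{A}_n[4]$ is, up to a constant and a prefactor of order $\sqrt n/\mathcal{N}_n$, a quadratic form in the centred variables $\zeta_\lambda:=|a_\lambda|^2-1$, $\lambda\in\Lambda_n^+$, with a symmetric kernel built from powers of $\langle \lambda/\sqrt n,\lambda'/\sqrt n\rangle$. From \cite{mafros}, for a static surface $\mathcal{M}_n[4]$ has the same structure, but its kernel is obtained by integrating a frequency-and-geometry-dependent density against $\Sigma$, the surface entering through the tangential projections of the frequencies at each point $\sigma$ (equivalently through the unit normal $n(\sigma)$). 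Both projections scale like $\sqrt n/\mathcal{N}_n$ times a double frequency sum normalised by $\mathcal{N}_n$.

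Next I would compute $\Cov(\mathcal{A}_n[4],\mathcal{M}_n[4])$ using that the $\{\zeta_\lambda\}_{\lambda\in\Lambda_n^+}$ are independent, mean zero, with $\E[\zeta_\lambda^2]=1$. For two symmetric quadratic forms, $\Cov(\zeta_\lambda\zeta_{\lambda'},\zeta_\mu\zeta_{\mu'})$ is non-zero only when $\{\lambda,\lambda'\}=\{\mu,\mu'\}$, so to leading order the covariance reduces to a single sum over pairs of the product of the two kernels; the diagonal ($\lambda=\lambda'$) contributes only $O(\mathcal{N}_n)$ terms and is negligible against the $O(\mathcal{N}_n^2)$ off-diagonal contribution. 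Passing to the high-energy limit, the hypothesis $n\not\equiv 0,4,7\pmod 8$ gives equidistribution of $\{\lambda/\sqrt n\}$ on $\mathbb{S}^2$, so that $\mathcal{N}_n^{-2}\sum_{\lambda,\lambda'\in\Lambda_n^+}(\cdots)$ converges to $\tfrac14\iint_{\mathbb{S}^2\times\mathbb{S}^2}(\cdots)\,d\sigma\,d\sigma'$; the well-separated assumption is what makes the error terms (from degenerate frequency configurations and from the sum-to-integral replacement) negligible.

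Finally I would evaluate the limiting integral. The key simplification is that, for each fixed $\sigma\in\Sigma$, the double integral over $\mathbb{S}^2\times\mathbb{S}^2$ of the product kernel is invariant under rotations and hence independent of the normal direction $n(\sigma)$; the remaining integration over $\Sigma$ therefore contributes exactly the factor $A=|\Sigma|$, which explains why the covariance is linear (rather than quadratic) in $A$. Carrying out the explicit angular integrals over $\mathbb{S}^2\times\mathbb{S}^2$ produces the numerical constant and gives $\Cov(\mathcal{A}_n,\mathcal{M}_n)\sim \frac{n}{\mathcal{N}_n^2}\cdot\frac{8\pi A}{375}$. Dividing by $\sqrt{\Var(\mathcal{A}_n)\Var(\mathcal{M}_n)}$ with the given variance asymptotics, the common factor $n/\mathcal{N}_n^2$ cancels and the numerical constants simplify (using $\cI_0=1$, i.e. $\iint_{\Sigma^2}d\sigma\,d\sigma'=A^2$) to $16/\sqrt{405\,\cI_4+175}$. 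The main obstacle is the bookkeeping of the Kac–Rice/Hermite expansion for the nodal intersection length on a surface — correctly tracking how $\Sigma$ enters $\mathcal{M}_n[4]$ through the tangential gradient and verifying the cancellation of cross terms — together with the explicit angular integration that pins down the constant $8\pi A/375$; the rotation-invariance observation is what must be justified with care, as it is responsible for the linear dependence on $A$.
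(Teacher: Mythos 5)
Your proposal is correct and follows essentially the same route as the paper: it isolates the fourth-order chaotic projections of $\mathcal A_n$ and $\mathcal M_n$ as quadratic forms in $|a_\lambda|^2-1$, computes their covariance via independence of the coefficients, passes to the equidistribution limit of the normalized frequency sums (this is exactly the content of Lemma \ref{lem:I3d}, and your rotation-invariance remark is precisely why those limits are $\Sigma$-independent and the covariance is linear in $A$), and then divides by the known variance asymptotics. The only difference is presentational: the paper works directly with the explicit kernels from \cite{mafros} and \cite{Cam19} and simplifies them using $I(2,0)=1/3$, $I(1,1)=\frac13\langle\lambda,\lambda'\rangle$ for static surfaces, whereas you argue structurally before the bookkeeping; both yield the same computation.
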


\begin{remark}\rm 
Static surfaces verify $1/5\leq\cI_4\leq 1/3$ (Lemma \ref{lem:dou3d}). The above limit is $1$ for `doubly static' surfaces i.e. those satisfying $\cI_4=1/5$, for instance sphere and hemisphere.
\end{remark}



\subsection{Plan of the paper}
The results concerning the 2-dimensional correlation structure of Arithmetic Random Waves are included in Section \ref{2D1}, whereas those for partial correlation are given in Section \ref{2D2}; Section \ref{3D} is devoted to the arguments related to nodal surfaces, whereas Appendix \ref{AuxLem} collects the proofs for the technical lemmas that we exploited to characterize static and doubly static curves.
\subsection{Acknowledgments}

We are grateful to GNAMPA-INdAM Project 2022 \emph{Proprietà e teoremi limite per funzionali di campi Gaussiani}, to MUR Department of Excellence Project 2023-2027 \emph{MatModTov}, to MUR Prin 2023-2025 \emph{Grafia}, and to Swiss National Science Foundation project 200021\_184927, held by Prof. Maryna Viazovska, for financial support.

\section{The 2-dimensional Correlation Structure}\label{2D1}

Let us start investigating the correlation between the boundary length at level $u$ and the number of nodal intersections with respect to a non-static curve. \\

\begin{proof}[Proof of Theorem \ref{19:07}]
For $u=0$ the result follows immediately from the orthogonality of the projections in the chaos expansion. In fact in \cite[Section 2.1]{roswig} it is shown that, in the case of a non-static curve, the second chaotic projection dominates the chaos expansion of $\mathcal{Z}_n(\mathcal{C})$, while it is known, see \cite[Section 1.4]{MPRW}, that $\mathcal L_n^0$ is dominated by the fourth chaotic projection. For $u \ne 0$ the boundary length is dominated by the second order chaos \cite[Theorem 2.4]{CMR23}, so we have  
    \begin{align*}
        &\Cov(\mathcal L_n^u, \mathcal Z_n(\mathcal C) ) \\
        &= \sqrt{2\pi^2n}\sqrt{\frac{\pi}{8}}\phi(u) u^2 \frac{\sqrt n}{\sqrt 2} L \frac{1}{{\mathcal N}_n^2/4}    \sum_{\lambda, \lambda'\in \Lambda_n^+} \mathbb E[(|a_\lambda|^2 - 1) (|a_{\lambda'}|^2 -1)]( 2 I_{\lambda',\lambda'}(2,0) -1 )  + o\left ( \frac{n}{\mathcal N_n}\right ) \\
        &=  \sqrt{2\pi^2n}\sqrt{\frac{\pi}{8}}\phi(u) u^2   \frac{\sqrt n}{\sqrt 2} L \frac{1}{{\mathcal N}_n^2/4} \sum_{\lambda \in \Lambda_n^+} \mathbb ( 2 I_{\lambda,\lambda}(2,0)-1) + o\left ( \frac{n}{\mathcal N_n}\right ) 
        \end{align*}
where the last step follows by observing that $2|a_\lambda|^2$ has a chi-squared distribution with $2$ degrees of freedom, and by recalling that $a_\lambda$ and $a_{\lambda}'$ are independent for $\lambda \ne \lambda'$. The statement immediately follows by observing that 
  \begin{align*}
  \sum_{\lambda \in \Lambda_n^+} \mathbb (-1 + 2 I_{\lambda,\lambda}(2,0))&=  \frac{1}{2} \sum_{\lambda \in \Lambda_n} \mathbb (-1 + 2 I_{\lambda, \lambda}(2,0)) = 0, 
  \end{align*}
    since, as shown in Lemma \ref{sumeq20} equation \eqref{sumeq20}, we have    
    $$
    \frac{1}{\mathcal N_n} \sum_{\lambda\in \Lambda_n} I_{\lambda, \lambda}(2,0) = \frac 1 2. 
    $$
\end{proof}

The main result to prove  Proposition \ref{we201223}, i.e.   the correlation structure among nodal length and nodal intersection numbers, is the following proposition. 

\begin{proposition}
\label{prop:LZ}
Let $\Lc_n$ be the nodal length, and $\Zc_n(\mathcal{C}')$ the nodal intersections number with a static curve. For a $\delta$-separated subsequence of energies $\{n\}\subset S$ such that ${\cal N}_{n}\to\infty$ and $\widehat{\mu}_{n}(4)\to \eta\in [-1,1]$,  we have
\begin{equation}
\label{eq:covLZ}
 \Cov(\mathcal L_n, \mathcal Z_n(\mathcal{C}^{\prime}) )  \sim  \frac{\sqrt{\pi^2 n}}{\mathcal N_n}   \frac{\sqrt{n}}{4 \mathcal N_n} L    \frac 1{16} \Big[ 1+ 2 \eta \mathcal{I}'_4 + \eta^2 \Big], 
\end{equation}
and
\begin{equation}
\label{eq:LZ}
\Corr(\Lc_n,\Zc_n(\mathcal{C}^{\prime}))\sim   \frac{ 1+ 2 \eta \mathcal{I}'_4 + \eta^2 }{\sqrt 2 \sqrt{1+\eta^2} \sqrt{2(1-\eta^2)(2\cI_4-1)+(\eta\cI_4'+1)^2}}.
\end{equation}
\end{proposition}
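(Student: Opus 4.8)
The plan is to exploit that both functionals are dominated, in $L^2$, by their projections onto the fourth Wiener chaos, so that the covariance is asymptotically the covariance of these two fourth-chaos components. Indeed $\Lc_n$ carries no second-chaos term and is dominated by $\Lc_n^0[4]$ (see \cite[Section 1.4]{MPRW}), while for a static curve $\Zc_n(\mathcal C')$ is dominated by $\Zc_n^a(\mathcal C')[4]$, its second chaos being of lower order and $\Var(\Zc_n^b(\mathcal C')[4])=o(\Var(\Zc_n^a(\mathcal C')[4]))$ (see \cite[Section 2.1, Lemma 6.5]{roswig}). By orthogonality of distinct Wiener chaoses together with the Cauchy--Schwarz inequality, every cross term involving chaoses of different order, or involving $\Zc_n^b$, is negligible relative to $\Cov(\Lc_n^0[4],\Zc_n^a(\mathcal C')[4])$; hence I would first reduce \eqref{eq:covLZ} to the computation of this single covariance.

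Second, I would observe that, writing $X_\lambda:=|a_\lambda|^2-1$ for $\lambda\in\Lambda_n^+$, both $\Lc_n^0[4]$ and $\Zc_n^a(\mathcal C')[4]$ are explicit quadratic forms $\sum_{\lambda,\lambda'\in\Lambda_n^+}X_\lambda X_{\lambda'}c_{\lambda\lambda'}$ and $\sum_{\lambda,\lambda'\in\Lambda_n^+}X_\lambda X_{\lambda'}d_{\lambda\lambda'}$ in these variables (up to additive constants, which do not affect the covariance), with symmetric coefficients $c_{\lambda\lambda'}\propto 1-2\langle\tfrac{\lambda}{|\lambda|},\tfrac{\lambda'}{|\lambda'|}\rangle^2$ and $d_{\lambda\lambda'}\propto 1-4I_{\lambda,\lambda'}(2,2)$. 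The family $\{X_\lambda\}_{\lambda\in\Lambda_n^+}$ is independent with $\E[X_\lambda]=0$, $\E[X_\lambda^2]=1$ and $\E[X_\lambda^4]=9$ (since $2|a_\lambda|^2\sim\chi^2_2$). A standard matched-pair computation then gives
\begin{equation*}
\Cov\Big(\sum_{\lambda,\lambda'}X_\lambda X_{\lambda'}c_{\lambda\lambda'},\ \sum_{\mu,\mu'}X_\mu X_{\mu'}d_{\mu\mu'}\Big)=2\sum_{\lambda\ne\lambda'}c_{\lambda\lambda'}d_{\lambda\lambda'}+8\sum_{\lambda}c_{\lambda\lambda}d_{\lambda\lambda},
\end{equation*}
in which the diagonal sum carries only $\mathcal N_n$ terms and is of lower order. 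After inserting the normalizing prefactors, this reduces \eqref{eq:covLZ} to evaluating the angular sum
\begin{equation*}
\frac{1}{\mathcal N_n^2}\sum_{\lambda,\lambda'\in\Lambda_n^+}\Big(1-2\big\langle\tfrac{\lambda}{|\lambda|},\tfrac{\lambda'}{|\lambda'|}\big\rangle^2\Big)\big(1-4I_{\lambda,\lambda'}(2,2)\big).
\end{equation*}

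Third --- and this is where the real work lies --- I would evaluate the limit of this sum. Expanding $I_{\lambda,\lambda'}(2,2)=\tfrac1L\int_0^L\langle\tfrac{\lambda}{|\lambda|},\dg(t)\rangle^2\langle\tfrac{\lambda'}{|\lambda'|},\dg(t)\rangle^2\,dt$ and writing each inner product through the polar angles $\theta_\lambda,\theta_{\lambda'}$ of the frequencies, the integrand becomes a trigonometric polynomial of degree four in $\theta_\lambda$ and in $\theta_{\lambda'}$ whose coefficients are the curve moments $A$, $B$, $\mathcal I_4'$ of Lemma \ref{lem:ABI}. After using the static identities of Lemma \ref{lem:sta} to discard the vanishing contributions, the normalized double sum factorizes into products of single lattice sums $\frac1{\mathcal N_n}\sum_\lambda e^{ik\theta_\lambda}$: the order-two sums vanish identically because $\widehat\mu_n(2)=0$ (a consequence of the $\pi/2$-rotation invariance of $\Lambda_n$), while the order-four sums converge to $\widehat\mu_n(4)\to\eta$. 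Collecting the constant term, the two curve-coupled terms linear in $\eta$, and the term quadratic in $\eta$ yields exactly $\tfrac1{16}\big[1+2\eta\mathcal I_4'+\eta^2\big]$, hence \eqref{eq:covLZ}. The $\delta$-separation hypothesis is what guarantees that the remainders discarded from the chaos expansions (and the off-diagonal approximation above) are genuinely negligible, exactly as in \cite{roswig}.

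Finally, \eqref{eq:LZ} would follow by dividing \eqref{eq:covLZ} by $\sqrt{\Var(\Lc_n)}\,\sqrt{\Var(\Zc_n(\mathcal C'))}$. The nodal-length variance satisfies $\Var(\Lc_n)\sim\frac{1+\eta^2}{512}\frac{E_n}{\mathcal N_n^2}$, and the static-curve intersection variance satisfies $\Var(\Zc_n(\mathcal C'))\sim\frac{n}{4\mathcal N_n^2}(16\mathcal A_{\mathcal C'}(\mu_n)-L^2)$; I would rewrite the limit of $16\mathcal A_{\mathcal C'}(\mu_n)-L^2$, again via Lemma \ref{lem:ABI} and $\widehat\mu_n(4)\to\eta$, as $L^2\big(2(1-\eta^2)(2\mathcal I_4-1)+(\eta\mathcal I_4'+1)^2\big)$, which produces the square-root in the denominator of \eqref{eq:LZ}. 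I expect the main obstacle to be the bookkeeping of the third step together with this last identification: it is precisely the coupling between the curve's fourth-order moment $\mathcal I_4'$ and the lattice's fourth Fourier coefficient $\eta$ that generates the cross term $2\eta\mathcal I_4'$ in both the covariance and the variance of $\Zc_n(\mathcal C')$, and making the two computations mutually consistent is the delicate part.
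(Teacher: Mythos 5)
Your proposal follows essentially the same route as the paper: the paper obtains \eqref{eq:covLZ} by specializing Theorem \ref{prop1u} to $u=0$, and the proof of that theorem is precisely the quadratic-form covariance computation in the variables $|a_\lambda|^2-1$ that you describe, with the angular/curve moment sums packaged in Lemma \ref{lemI} (your matched-pair formula is a tidier organization of the paper's term-by-term expansion into $A$, $B_1$, $B_2$, $C$, and your reduction $8\mathcal J-1\propto 1+2\eta\mathcal I_4'+\eta^2$ is consistent with the paper's $-\tfrac14+2\mathcal J$). One bookkeeping point to correct in your last step: the limit of $16\mathcal A_{\mathcal C'}(\mu_n)-L^2$ is $\tfrac{L^2}{4}\big(2(1-\eta^2)(2\mathcal I_4-1)+(\eta\mathcal I_4'+1)^2\big)$, not $L^2(\cdots)$ --- the paper quotes $\Var(\mathcal Z_n(\mathcal C')[4])\sim\tfrac{n}{4\mathcal N_n^2}\cdot\tfrac{L^2}{4}[\cdots]$ --- and only with that extra $\tfrac14$ do the constants reproduce \eqref{eq:LZ}; as written your normalization would cost a factor of $2$ in the correlation.
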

\begin{proof}[Proof of Proposition \ref{prop:LZ}]
The covariance in \eqref{eq:covLZ} follows immediately from Proposition \ref{prop1u} with $u=0$.  Now we take into account the expression for $\Var(\Lc_n[4])$ in \cite[(2.20)]{MPRW}, and, in our notation, \cite[(3.4)]{roswig} is
\begin{equation*}
\Var(\Zc[4])\sim\frac{n}{4\mathcal{N}^{2}}\cdot\frac{L^2}{4}[2(1-\eta^2)(2\cI_4-1)+(\eta\cI_4'+1)^2].
\end{equation*}
\end{proof}

\section{The Partial Correlation Structure in the $2$-dimensional Case}\label{2D2}

\begin{proof}[Proof of Proposition \ref{prop1}]
Let us write 
\begin{align*}
    \Cov(\mathcal L_n^{u_1}[4],\mathcal L_n^{u_2}[4] )  \sim &\, \phi(u_1) \phi(u_2) \frac{\pi}{4} \frac{4 \pi^2 n}{\mathcal N_n^2} \\
    &\times \Big \{ a(u_1)a(u_2) \Var(W_1(n)^2) \\ 
    &+ [a(u_1)+a(u_2)]\Cov\Big (W_1(n)^2, - \frac14 W_2(n)^2 - \frac14 W_3(n)^2 - \frac12 W_4(n)^2\Big ) \\
    &+ \Var \Big (- \frac14 W_2(n)^2 - \frac14 W_3(n)^2 - \frac12 W_4(n)^2 \Big)\Big \}.
\end{align*}
From Lemma 4.3 in \cite{MPRW} and a simple computations of Gaussian moments we have, as $n\to +\infty$,
\begin{equation*}
    \Var(W_1(n)^2)\to 2,
\end{equation*}
and moreover 
\begin{equation*}
    \Cov\Big (W_1(n)^2, - \frac14 W_2(n)^2 - \frac14 W_3(n)^2 - \frac12 W_4(n)^2\Big )\to -\frac14.
\end{equation*}
Finally,
\begin{align*}
    &\Var\Big (- \frac14 W_2(n)^2 - \frac14 W_3(n)^2 - \frac12 W_4(n)^2\Big )\\ 
    &\to \frac{1}{16}\cdot 2 \left ( \frac{3+\eta}{8}\right )^2 + \frac{1}{16}\cdot 2 \left ( \frac{3+\eta}{8}\right )^2 + \frac{1}{4}\cdot 2 \cdot  \left ( \frac{1-\eta}{8}\right )^2+ 2\cdot \frac{1}{16}\cdot 2  \left (\frac{1-\eta}{8} \right )^2\\ 
    &= \frac{1}{ 8^2}(3+ \eta^2),
\end{align*}
thus concluding the proof. 

\end{proof}

Let us now investigate partial correlation between the boundary length and the number of intersections with a static curve. Define
\begin{align*}
{\cal I}&:=  \frac{1}{L}\int_0^L\Big [ \frac{3 + \eta}{8} \dg_1^4(t)  + \frac{3 + \eta}{8}  \dg_2^4(t) + 6 \frac{1- \eta}{8}  \dg_1^2(t) \dg_2^2(t)\Big ] dt,\\
{\cal J} &:= \frac{1}{L}\int_0^L \Big[ \Big (\frac{3+\eta}{8}  \Big )^2\dg_1^4(t) + \Big ( \frac{1-\eta}{8}\Big )^2 \dg_1^4(t) + 4\Big ( \frac{3+\eta}{8}\Big ) \Big ( \frac{1-\eta}{8}\Big )\dg_1^2(t) \dg_2^2(t) \\
&\hspace{1.6cm} + \Big ( \frac{1-\eta}{8}\Big )^2 \dg_2^4(t) + \Big (\frac{3+\eta}{8}  \Big )^2\dg_2^4(t) + 8 \Big ( \frac{1-\eta}{8}\Big )^2 \dg_1^2(t) \dg_2^2(t) \Big] dt.
\end{align*}

\begin{proof}[Proof of Theorem \ref{prop1u}]
Using the expressions of the $4$-th order chaos, see Section \ref{sec_chaos}, we write 
\begin{align*}
& \Cov(\mathcal L_{n}^{u}[4], {\mathcal Z}_n(\mathcal C ')[4] ) \\ &\sim \phi(u)\sqrt{\frac{\pi}{2}} \frac{\sqrt{E_n/2}}{\mathcal N_n}   \frac{\sqrt{2n}}{4 \mathcal N_n} L   \;  \mathbb{E} \Big\{ \Big [ a(u) W_1(n)^2 - \frac14 W_2(n)^2 - \frac14 W_3(n)^2 - \frac12 W_4(n)^2  \Big ] \\
 & \hspace{5.4cm} \times \Big [ \frac{2}{ \mathcal{N}_n} \sum_{\lambda, \lambda' \in \Lambda_n} (|a_\lambda|^2-1) (|a_{\lambda'}|^2-1) (1-4 I_{\lambda, \lambda'}(2,2))\Big] \Big\}. 
\end{align*}
The first term is 
\begin{align*}
A&= a(u)  \mathbb{E}\left[  W_1(n)^2 \frac{2}{ \mathcal{N}_n} \sum_{\lambda, \lambda' \in \Lambda^+_n} (|a_\lambda|^2-1) (|a_{\lambda'}|^2-1) (1-4 I_{\lambda, \lambda'}(2,2))  \right] \\
&= a(u)\frac{4}{ {\mathcal N}^2_n} \mathbb{E} \Big [  \Big( \sum_{\lambda_1, \lambda_2 \in \Lambda^+_n} (|a_{\lambda_1}|^2-1) (|a_{\lambda_2}|^2-1)  \Big) \Big( \sum_{\lambda_3, \lambda_4 \in \Lambda^+_n} (|a_{\lambda_3}|^2-1) (|a_{\lambda_4}|^2-1) (1-4 I_{\lambda_3, \lambda_4}(2,2)) \Big) \Big] \\
&= a(u)\frac{4}{ {\mathcal N}^2_n}  \Big [\sum_{\lambda, \lambda' \in \Lambda^+_n} (1-4 I_{\lambda', \lambda'}(2,2))  + 2 \sum_{\lambda, \lambda' \in \Lambda^+_n}(1-4 I_{\lambda, \lambda'}(2,2))  \Big]\\
&=  a(u)\frac{4}{ {\mathcal N}^2_n} \sum_{\lambda, \lambda' \in \Lambda^+_n} (1-4 I_{\lambda', \lambda'}(2,2))  
\end{align*}
where in the last step we apply \eqref{sumdif}.  The second and third term  have the form 
\begin{align*}
B_i&=-\frac 1 4 \frac{4}{n^2 {\mathcal N}^2_n} \Big [\sum_{\lambda, \lambda' \in \Lambda^+_n} \lambda_i^4 (1-4 I_{\lambda', \lambda'}(2,2))  + 2 \sum_{\lambda, \lambda' \in \Lambda^+_n} \lambda_i^2 (\lambda'_i)^2 (1-4 I_{\lambda, \lambda'}(2,2))  \Big]
\end{align*}
for $i=1,2$, and the last term is given by 
\begin{align*}
C&=-\frac 1 2 \frac{4}{n^2 {\mathcal N}^2_n} \Big [ \sum_{\lambda, \lambda' \in \Lambda^+_n} \lambda_1^2 \lambda_2^2 (1-4 I_{\lambda', \lambda'}(2,2))  + 2 \sum_{\lambda, \lambda' \in \Lambda^+_n} \lambda_1 \lambda_2 \lambda'_1 \lambda'_2  (1-4 I_{\lambda, \lambda'}(2,2))  \Big].
\end{align*}
We observe that 
\begin{align*}
B_1+B_2+C&= -\frac 1 4 \frac{4}{n^2 {\mathcal N}^2_n} \Big[ \sum_{\lambda, \lambda' \in \Lambda^+_n} \langle \lambda, \lambda \rangle^2  (1-4 I_{\lambda', \lambda'}(2,2)) + 2\sum_{\lambda, \lambda' \in \Lambda^+_n} \langle \lambda, \lambda' \rangle^2  (1-4 I_{\lambda, \lambda'}(2,2))  \Big].
\end{align*}
In view of Lemma \ref{lemI} we write 
\begin{align*}
A&=a(u) \frac{1}{{\mathcal N}_n^2} [{\mathcal N}_n^2 - 4 {\mathcal N}_n^2 {\mathcal{I}}]= a (u) [1-4 \, \mathcal{I}]\\
B_1+B_2+C&=- \frac 1 4 \frac{1}{n^2 {\mathcal N}_n^2} [n^2 {\mathcal N}_n^2 -4 n^2 {\mathcal N}_n^2 \mathcal{I} + 2\frac 1 2 n^2 {\mathcal N}_n^2- 2 \cdot 4 n^2 {\mathcal N}_n^2 {\mathcal{J}}]= - 1  +4 \mathcal{I} - 2\frac 1 2+ 2 \cdot 4 \mathcal{J};
\end{align*}
i.e. 
\begin{align*}
 A+B_1+B_2+C=    a (u) [1-4 \, \mathcal{I}] -\frac  1 4  + \mathcal{I} - \frac 1 4 + 2 \mathcal{J}  =  \Big [a(u) -\frac 1 4 \Big ] [1-4 \, \mathcal{I}]  - \frac 1 4  + 2 \mathcal{J}.
\end{align*}
Moreover, 
\begin{align*}
- \left( a(u) - \frac14 \right) \frac{1}{\mathcal N_n}\sum_{\lambda\in \Lambda_n} (4\mathcal I_{\lambda, \lambda}(4,0) - 1) = - \left( a(u)-\frac14 \right) (4\mathcal I -1),  
\end{align*}
and 
\begin{align*}
    - \left( a(u) - \frac14 \right) \frac{1}{\mathcal N_n/2}\sum_{\lambda,\lambda'\in \Lambda^+_n} (|a_\lambda|^2-1)(|a_{\lambda'}|^2-1)(1-4\mathcal I_{\lambda,\lambda'}(2,2)) =- \left(a(u) - \frac14 \right) (1-4\mathcal I). 
    \end{align*}
    Finally, 
    \begin{align*}
&\frac{1}{\mathcal N_n}\sum_{\lambda\in \Lambda_n} (4\mathcal I_{\lambda, \lambda}(4,0) - 1)\frac{1}{\mathcal N_n/2}\sum_{\lambda,\lambda'\in \Lambda^+_n} (|a_\lambda|^2-1)\left(|a_{\lambda'}|^2-1)(a(u) - \frac14 \left\langle \frac{\lambda}{|\lambda|}, \frac{\lambda'}{|\lambda'|} \right\rangle^2 \right)\\
&= (4\mathcal I -1) \left (a(u)-\frac14 \right). 
    \end{align*}
So we obtain that 
\begin{align*}
 \Cov(\mathcal L_{n}^{u}[4], \mathcal Z_n(\mathcal{C}^{\prime})[4])  &\sim \phi(u)\sqrt{\frac{\pi}{2}} \frac{\sqrt{E_n/2}}{\mathcal N_n}   \frac{\sqrt{2n}}{4 \mathcal N_n} L    \Big\{  \Big [a(u) -\frac 1 4 \Big ] [1-4 \, \mathcal{I}] - \frac 1 4 + 2 \mathcal{J} - (a(u) -\frac14) (1-4\mathcal I) \Big\}\\
 &= \phi(u)\sqrt{\frac{\pi}{2}} \frac{\sqrt{E_n/2}}{\mathcal N_n}   \frac{\sqrt{2n}}{4 \mathcal N_n} L    \Big\{   - \frac 1 4 + 2 \mathcal{J}  \Big\}.
 \end{align*}
Observing that, 
\begin{align*}
{\cal J} = 5 \frac{2}{8^2} + \frac{4}{8^2} \eta {\cal I}'_4 + \frac{2}{8^2} \eta^2,
\end{align*}
we arrive at
\begin{align*}
 \Cov(\mathcal L_{n}^{u}[4], \mathcal Z(\mathcal{C}^{\prime})[4] )  &\sim   \phi(u)\sqrt{\frac{\pi}{2}} \frac{\sqrt{E_n/2}}{\mathcal N_n}   \frac{\sqrt{2n}}{4 \mathcal N_n} L    \frac 1{16} \Big[ 1+ 2 \eta \mathcal{I}'_4 + \eta^2 \Big]. 
\end{align*}

\end{proof}

\begin{proof}[Proof of Corollary \ref{Cor:Correlation Curves}]
We need to study the covariance expression for the boundary lengths at thresholds $u_1,u_2$, that is
\[
\mathrm {Cov}(\mathcal{L}_{n}^{u_{1}},\mathcal{L}_{n}^{u_{2}})=2a(u_{1})a(u_{2})-\frac{1}{4}(a(u_{1})+a(u_{2}))+\frac{3+\eta^2 }{64}
\]
where we have that
\begin{align*}
a(u) =\frac{u^{4}-6u^{2}+3}{4}+\frac{u^{2}-1}{2}-\frac{1}{8} =\frac{1}{4}u^{4}-u^{2}+\frac{1}{8}. 
\end{align*}
We have that
\begin{align*}
\mathrm {Cov}(\mathcal{L}_{n}^{u},\mathcal{L}_{n}^{0})&=2a(u)a(0)-\frac{1}{4}(a(u)+a(0))+\frac{3+ \eta^2}{64} \\
&=2\left(\frac{1}{4}u^{4}-u^{2}+\frac{1}{8}\right)\frac{1}{8}-\frac{1}{4}\left(\frac{1}{4}
u^{4}-u^{2}+\frac{1}{8}+\frac{1}{8}\right)+\frac{3+ \eta^2}{64} \\
&=\frac{1}{64}(2(2u^{4}-8u^{2}+1)-2(2u^{4}-8u^{2}+2)+3+ \eta^2)\\
&=\frac{1+ \eta^2}{64}.
\end{align*}
On the other hand
\begin{align*}
\mathrm{Var}(\mathcal{L}_{n}^{u})&=\mathrm{Cov}(\mathcal{L}_{n}^{u},\mathcal{L}_{n}^{u})\\
&=2\left(\frac{1}{4}u^{4}-u^{2}+\frac{1}{8}\right)^{2}-\frac{1}{2} \left(\frac{1}{4}u^{4}-u^{2}+\frac{1}{8}\right)+\frac{3+ \eta^2}{64}\\
&=\frac{1}{8}u^{8}-u^{6}+2u^{4}+\frac{1+ \eta^2}{64}=\frac{u^{4}}{8}(u^{2}-4)^{2}+\frac{1+ \eta^2}{64},
\end{align*}
so that the squared correlation is given by
\begin{align*}
\frac{\{\mathrm{Cov}(\mathcal{L}_{n}^{u},0)\}^2}{\mathrm{Var}(\mathcal{L}_{n}^{0}) \mathrm{Var}(\mathcal{L}_{n}^{u})}=\frac{1+ \eta^2}{8u^{4}(u^{2}-4)^{2}+1+ \eta^2},
\end{align*}
and there are resonance points for the nodal length at $u=\pm 2$, because at those points
obviously $\frac{u^{4}}{8}(u^{2}-4)^{2}=0.$ More generally, considering any two threshold levels $u_1,u_2$ we obtain that
\begin{align*}
\mathrm{Cov}(\mathcal{L}_{n}^{u_1},\mathcal{L}_{n}^{u_2})&=2a(u_1)a(u_2)-\frac{1}{4}(a(u_1)+a(u_2))+\frac{3+ \eta^2}{64}\\
&=2\left(\frac{1}{4}u_1^{4}-u_1^{2}+\frac{1}{8}\right) \left(\frac{1}{4}u_2^{4}-u_2^{2}+\frac{1}{8}\right)-
\frac{1}{4}\left(\frac{1}{4}u_1^{4}-u_1^{2}+\frac{1}{8}+\frac{1}{4}u_2^{4}-u_2^{2}+\frac{1}{8}\right)+\frac{3}{64}\\
&= \frac{1}{8}u_1^{4}u_2^{4}-\frac{1}{2}u_1^{4}u_2^{2}-\frac{1}{2}u_1^{2}u_2^{4}+2u_1^{2}u_2^{2}+\frac{1+ \eta^2}{64},
\end{align*}
so that the correlation is one if and only if
$$1=\frac{(\frac{1}{8}u_1^{4}u_2^{4}-\frac{1}{2}u_1^{4}u_2^{2}-\frac{1}{2}
u_1^{2}u_2^{4}+2u_1^{2}u_2^{2}+\frac{1+ \eta^2}{64})^{2}}{(\frac{1}{8}u_1^{8}-u_1^{6}+2u_1^{4}+
\frac{1+ \eta^2}{64})(\frac{1}{8}u_2^{8}-u_2^{6}+2u_2^{4}+\frac{1+ \eta^2}{64})},
$$
and hence, 
\begin{align*}
&\left(\frac{1}{8}u_1^{4}u_2^{4}-\frac{1}{2}u_1^{4}u_2^{2}-\frac{1}{2}
u_1^{2}u_2^{4}+2u_1^{2}u_2^{2}+\frac{1+ \eta^2}{64}\right)^{2}\\
&\;\;-\left(\frac{1}{8}u_1^{8}-u_1^{6}+2u_1^{4}+
\frac{1+ \eta^2}{64}\right) \left(\frac{1}{8}u_2^{8}-u_2^{6}+2u_2^{4}+\frac{1+ \eta^2}{64}\right) \\
&=-\frac{1+ \eta^2}{512}\left( -u_1^{4}+4u_1^{2}+u_2^{4}-4u_2^{2}\right) ^{2}=0. 
\end{align*}
\end{proof}

\section{The 3-dimensional Correlation between Nodal Area and Nodal Intersections}\label{3D}

\begin{proof}[Proof of Theorem \ref{theo:3D}]
We may write \cite[(8.99) and Lemma 8.1]{mafros} as
\begin{eqnarray*}
	\mathcal M[4]\sim\sqrt{\frac{4\pi^2m}{3}} \frac{3\cdot 2\cdot A}{16\cdot 8\cdot \Nc}\left [\frac{32}{15}+\frac{1}{\Nc/2}\sum_{\lambda,\lambda'\in\Lambda_{n_j}^{+}}(|a_\lambda|^2-1)(|a_{\lambda'}|^2-1) \left(3-9I(2,2)+14I(2,0)\right.\right.\\\left.\left.-6\left\langle\frac{\lambda}{|\lambda|},\frac{\lambda'}{|\lambda'|}\right\rangle^2+12\left\langle\frac{\lambda}{|\lambda|},\frac{\lambda'}{|\lambda'|}\right\rangle I(1,1) \right)\right],
	\end{eqnarray*}
with
\[I(k,k')=I_{\lambda,\lambda'}(k,k'):=\frac{1}{A}\int_{\Sigma}\left\langle\frac{\lambda}{|\lambda|},n(\sigma)\right\rangle^k\left\langle\frac{\lambda'}{|\lambda'|},n(\sigma)\right\rangle^{k'}d\sigma.\]
For static surfaces we have (Lemma \ref{lem:sta3d})
\[I(2,0)=\frac{1}{3}, \qquad\qquad I(1,1)=\frac{1}{3}\langle\lambda,\lambda'\rangle\]
hence
\begin{eqnarray}\label{L4}
	\mathcal M[4]\sim\sqrt{\frac{4\pi^2m}{3}} \frac{3\cdot 2}{16\cdot 8\cdot \Nc}\frac{A}{15}\left [32+\frac{5}{\Nc/2}\sum_{\lambda,\lambda'\in\Lambda_{n_j}^{+}}(|a_\lambda|^2-1)(|a_{\lambda'}|^2-1) \left(5-27I(2,2)\right.\right.\notag\\\left.\left.-6\left\langle\frac{\lambda}{|\lambda|},\frac{\lambda'}{|\lambda'|}\right\rangle^2\right)\right].
	\end{eqnarray}
Starting with the case of doubly static $\Sigma$, then
\[I(2,2)=\frac{1}{15}(1+2\langle\lambda,\lambda'\rangle^2)\]
so that
\begin{equation*}
	\mathcal M[4]\sim\sqrt{\frac{4\pi^2m}{3}} \frac{3\cdot 2\cdot A}{16\cdot 8\cdot \Nc}\cdot\frac{16}{15}\left [2+\frac{1}{\Nc/2}\sum_{\lambda,\lambda'\in\Lambda_{n_j}^{+}}(|a_\lambda|^2-1)(|a_{\lambda'}|^2-1) \left(1-3\left\langle\frac{\lambda}{|\lambda|},\frac{\lambda'}{|\lambda'|}\right\rangle^2\right)\right].
	\end{equation*}
Comparing with \cite{Cam19}
\begin{equation}\label{A4}
	\mathcal{A}[4]\sim\frac{\sqrt{m}}{5\sqrt{3}\Nc}\cdot 2\left [2+\frac{1}{\Nc/2}\sum_{\lambda,\lambda'\in\Lambda_{n_j}^{+}}(|a_\lambda|^2-1)(|a_{\lambda'}|^2-1) \left(1-3\left\langle\frac{\lambda}{|\lambda|},\frac{\lambda'}{|\lambda'|}\right\rangle^2\right)\right],
	\end{equation}
we see that in this case the two expressions are the same up to a multiplicative factor depending on the area $A$, and in particular $\text{Corr}(\mathcal{A},\mathcal M)\to 1$.

In the general case, from \eqref{A4} and \eqref{L4} we compute $\Cov(\mathcal{A}[4],\mathcal M[4])$, where many terms cancel out, leaving
\begin{eqnarray}
\label{cov3dpre}
\Cov(\mathcal{A}[4],\mathcal M[4])\sim\frac{m}{\Nc^2}\frac{1}{5\sqrt{3}}\frac{2\pi}{\sqrt{3}} \frac{3\cdot 2}{16\cdot 8}\frac{A}{15}\cdot \frac{10}{\Nc^2}\sum_{\lambda,\lambda'\in\Lambda}\left(5-21\left\langle\frac{\lambda}{|\lambda|},\frac{\lambda'}{|\lambda'|}\right\rangle^2+18\left\langle\frac{\lambda}{|\lambda|},\frac{\lambda'}{|\lambda'|}\right\rangle^4\right.\notag\\\left.-27I(2,2)+81I(2,2)\left\langle\frac{\lambda}{|\lambda|},\frac{\lambda'}{|\lambda'|}\right\rangle^2\right).
\end{eqnarray}
We exchange the order of sums and integral, and apply Lemma \ref{lem:I3d} to simplify \eqref{cov3dpre} to \eqref{cov3d}.

Lastly, we take into account the expressions for $\Var(\mathcal{A})$ \cite[Theorem 1.2]{benmaf} and $\Var(\mathcal M)$ \cite[(1.19)]{mafros} to conclude the proof.
\end{proof}

\subsection{Static surfaces}
\begin{lemma}
\label{lem:I3d}
For any static surface $\Sigma$, we have as $m\to\infty$, $m\not\equiv 0,4,7 \pmod 8$
\begin{align*}
\frac{1}{\Nc^2}\sum_{\lambda,\lambda'\in\Lambda}\left\langle\frac{\lambda}{|\lambda|},\frac{\lambda'}{|\lambda'|}\right\rangle^4\to&\frac{1}{5},
\\
\frac{1}{\Nc^2}\sum_{\lambda,\lambda'\in\Lambda}I(2,2)\to&\frac{1}{9},
\\
\frac{1}{\Nc^2}\sum_{\lambda,\lambda'\in\Lambda}I(2,2)\left\langle\frac{\lambda}{|\lambda|},\frac{\lambda'}{|\lambda'|}\right\rangle^2\to&\frac{11}{15^2}.
\end{align*}
\end{lemma}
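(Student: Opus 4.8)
The plan is to rewrite each of the three sums as a double integral against the empirical spectral measure on the sphere and then pass to the limit by equidistribution. Introduce the probability measure $\mu_m := \frac{1}{\Nc}\sum_{\lambda\in\Lambda}\delta_{\lambda/\sqrt m}$ supported on the normalized lattice points $\lambda/|\lambda|=\lambda/\sqrt m\in\mathbb{S}^2$. Then each sum is \emph{exactly} a double integral against the product $\mu_m\otimes\mu_m$: the first has integrand $(\theta,\theta')\mapsto\langle\theta,\theta'\rangle^4$, the second $(\theta,\theta')\mapsto I(2,2)=\frac1A\int_\Sigma\langle\theta,n(\sigma)\rangle^2\langle\theta',n(\sigma)\rangle^2\,d\sigma$, and the third their product. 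The first integrand is manifestly continuous on $\mathbb{S}^2\times\mathbb{S}^2$; the second is a polynomial in the coordinates of $\theta,\theta'$ with coefficients given by integrals over $\Sigma$, hence continuous (indeed smooth); and the third, being a product of continuous functions, is continuous as well. An advantage of writing the full double sum as a double integral is that the diagonal $\lambda=\lambda'$ requires no separate treatment, as it is already carried with the correct weight.

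Next I would invoke the three-dimensional equidistribution theorem recalled above (\cite{iwniec}): $\mu_m\to\omega$ weak-$*$ as $m\to\infty$ with $m\not\equiv 0,4,7\pmod 8$, where $\omega$ denotes the uniform probability measure on $\mathbb{S}^2$. Weak-$*$ convergence of $\mu_m$ to $\omega$ upgrades automatically to $\mu_m\otimes\mu_m\to\omega\otimes\omega$ weak-$*$: any continuous $F(\theta,\theta')$ can be approximated uniformly by finite sums $\sum_i g_i(\theta)h_i(\theta')$ via Stone--Weierstrass, and for each product term $\int g_i\,d\mu_m\int h_i\,d\mu_m\to\int g_i\,d\omega\int h_i\,d\omega$. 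Since each of the three integrands is continuous, the three sums converge to the corresponding integrals against $\omega\otimes\omega$.

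It remains to evaluate the three limiting integrals using the standard spherical moments $\int_{\mathbb{S}^2}\langle\theta,u\rangle^2\,d\omega(\theta)=\frac13$, $\int_{\mathbb{S}^2}\langle\theta,u\rangle^4\,d\omega(\theta)=\frac15$, and $\int_{\mathbb{S}^2}\langle\theta,u\rangle^2\langle\theta,v\rangle^2\,d\omega(\theta)=\frac1{15}$ for orthonormal $u,v$. For the first, rotational invariance gives $\int\!\int\langle\theta,\theta'\rangle^4\,d\omega\,d\omega=\int_{\mathbb{S}^2}\langle\theta,e_3\rangle^4\,d\omega=\frac15$. For the second, Fubini (swapping the $\theta,\theta'$-integrals inside the $\Sigma$-integral) gives $\frac1A\int_\Sigma\big(\tfrac13\big)\big(\tfrac13\big)\,d\sigma=\frac19$. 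For the third, fix $\sigma$, set $u=n(\sigma)$, expand $\langle\theta,\theta'\rangle^2=\sum_{i,j}\theta_i\theta_j\theta'_i\theta'_j$, and factorize the double integral as $\sum_{i,j}\big(\int_{\mathbb{S}^2}\langle\theta,u\rangle^2\theta_i\theta_j\,d\omega\big)^2$; choosing coordinates with $u=e_3$, only the diagonal terms survive (odd moments vanish), yielding $(1/5)^2+2\,(1/15)^2=11/15^2$ independently of $u$, so the $\Sigma$-average equals $11/15^2$ as well.

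The computations use no property of $\Sigma$ beyond finite area $A$ and an a.e.\ defined unit normal; staticity is needed for the surrounding argument in the proof of Theorem \ref{theo:3D} but not inside this lemma. Consequently the only genuine analytic input is the equidistribution theorem together with its lift to product measures. The points to watch are the continuity of the $\Sigma$-dependent integrand in $(\theta,\theta')$ (required to apply weak-$*$ convergence) and the legitimacy of the Fubini exchanges, both immediate from the boundedness of the integrands and the finiteness of $A$.
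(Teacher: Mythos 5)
Your proof is correct and follows essentially the same route as the paper's: both expand the three summands into coordinate monomials and reduce everything to the equidistribution of $\Lambda_n/\sqrt{n}$ on $\mathbb{S}^2$ (the paper delegates the resulting moment asymptotics to the cited lemma of \cite{benmaf}, while you derive them directly from the weak-$*$ convergence of the empirical measures together with the standard spherical moments $1/3$, $1/5$, $1/15$). Your evaluations, including the value $11/15^2$ for the third sum and the observation that staticity plays no role inside the lemma itself, all check out.
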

\begin{proof}
In each of the three expressions,
we expand the summands and apply \cite[Lemma 2.5]{benmaf}. Then we simply recall that the normal $n$ is of norm one to complete the calculations in the second and third expressions.
\end{proof}

\begin{lemma}
	\label{lem:sta3d}
A surface $\Sigma$ is static if and only if for every $i,j$ one has
\[\int_{\Sigma}n_in_j\, d\sigma=\frac{A}{3}\delta_{ij}.\]	
\end{lemma}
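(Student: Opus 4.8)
The plan is to translate the defining condition $\cI_2=1/3$ into a statement about the $3\times 3$ matrix of second moments of the unit normal, and then recognize the situation as the equality case of a Cauchy--Schwarz-type inequality. First I would introduce the symmetric matrix $M=(M_{ij})$ with
\[
M_{ij}:=\int_{\Sigma}n_i(\sigma)n_j(\sigma)\,d\sigma .
\]
Because $n(\sigma)$ is a unit vector, $\sum_i n_i(\sigma)^2=1$, so $\tr M=\int_\Sigma\sum_i n_i^2\,d\sigma=A$; moreover $M=\int_\Sigma nn^{\top}\,d\sigma$ is positive semidefinite.

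Next I would expand the defining double integral. Writing $\langle n(\sigma),n(\sigma')\rangle^2=\sum_{i,j}n_i(\sigma)n_j(\sigma)n_i(\sigma')n_j(\sigma')$ and integrating, the integral over $\Sigma^2$ factorizes into
\[
\iint_{\Sigma^2}\langle n(\sigma),n(\sigma')\rangle^2\,d\sigma\,d\sigma'=\sum_{i,j}M_{ij}^2=\tr(M^2).
\]
Hence $\cI_2=\tr(M^2)/A^2$, and staticity $\cI_2=1/3$ is equivalent to $\tr(M^2)=A^2/3$.

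Finally I would diagonalize $M$, with eigenvalues $\mu_1,\mu_2,\mu_3\ge 0$ satisfying $\mu_1+\mu_2+\mu_3=A$ and $\mu_1^2+\mu_2^2+\mu_3^2=\tr(M^2)$. The power-mean (equivalently Cauchy--Schwarz) inequality gives $\mu_1^2+\mu_2^2+\mu_3^2\ge(\mu_1+\mu_2+\mu_3)^2/3=A^2/3$, with equality if and only if $\mu_1=\mu_2=\mu_3=A/3$. Thus $\tr(M^2)=A^2/3$ holds precisely when all three eigenvalues equal $A/3$, i.e. when $M=(A/3)I$, which is exactly the stated condition $\int_\Sigma n_i n_j\,d\sigma=(A/3)\delta_{ij}$. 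Both directions of the equivalence then follow: the converse ($M=(A/3)I\Rightarrow\cI_2=1/3$) is an immediate substitution, and the forward implication is the equality case just described.

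There is no serious obstacle here; the only point requiring care is the equality analysis, where one must combine the trace constraint $\sum_i\mu_i=A$ with the equality condition of the inequality to conclude that $M$ is a \emph{scalar} multiple of the identity, rather than merely that the eigenvalues coincide abstractly. The symmetry (hence real-diagonalizability, with nonnegative eigenvalues from positive semidefiniteness) of $M$ is what makes this step legitimate.
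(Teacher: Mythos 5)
Your proof is correct and follows essentially the same route as the paper: both identify $A^2\cI_2$ with $\sum_{i,j}\bigl(\int_\Sigma n_i n_j\,d\sigma\bigr)^2$ and characterize the equality case of the resulting sum-of-squares lower bound $A^2/3$ under the trace constraint $\sum_i\int_\Sigma n_i^2\,d\sigma=A$. The only cosmetic difference is that you diagonalize the second-moment matrix and run the inequality on its eigenvalues, whereas the paper discards the (nonnegative) off-diagonal squares and applies the same inequality directly to the diagonal entries.
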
	
\begin{proof}
We write
\begin{align}
\label{eq:isum}
\cI:=\iint_{\Sigma^2} \langle n(\sigma), n(\sigma')\rangle^2\,d\sigma d\sigma'=\sum_{i,j}\left(\int_{\Sigma}n_i(\sigma)n_j(\sigma)d\sigma\right)^2\geq\sum_{i}\left(\int_{\Sigma}n_i(\sigma)^2d\sigma\right)^2.
\end{align}
The sum of the three integrals
\[\sum_{i}\int_{\Sigma}n_i(\sigma)^2=A\]
is fixed, so the sum of their squares is smallest when they are all equal:
\begin{align*}
\cI\geq\sum_{i}\left(\frac{A}{3}\right)^2=\frac{A^2}{3}.
\end{align*}
All summands in \eqref{eq:isum} are non-negative: then $\cI$ is minimised, i.e. $\Sigma$ is static, if and only if for every $i,j$ one has
\[\int_{\Sigma}n_in_j\, d\sigma=\frac{A}{3}\delta_{ij}.\]
\end{proof}

\begin{remark}
Surfaces in Remark \ref{rem:sta} that satisfy the further condition $\int n_1^4d\sigma =|\Sigma|/5$ are doubly static, due to Lemma \ref{lem:dou3d}.
\end{remark}
\begin{lemma}
	\label{lem:dou3d}
One has $\cI_4=1/5$ if and only if $\Sigma$ is static and
\begin{equation}
\label{eq:515}
\int n_i n_j n_\ell n_k\, d\sigma = \begin{cases}
A/5\qquad \text{if } i=j=\ell=k,\\
A/15\qquad \text{if } i,j,\ell,k \text{ are pairwise equal},\\
0\qquad \text{otherwise.}
\end{cases}
\end{equation}	
Generic surfaces $\Sigma$ satisfy
$1/5\leq\cI_4\leq 1$
(and the maximum is attained by surfaces contained in a plane). If $\Sigma$ is static, then
\[\frac{1}{5}\leq\cI_4\leq\frac{1}{3}.\]
\end{lemma}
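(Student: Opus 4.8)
The plan is to reduce the whole statement to the fully symmetric moment tensor
$$M_{ijkl}:=\int_{\Sigma}n_i n_j n_k n_l\,d\sigma.$$
Since $\langle n(\sigma),n(\sigma')\rangle=\sum_i n_i(\sigma)n_i(\sigma')$, raising to the fourth power and integrating the product separately in $\sigma$ and in $\sigma'$ gives the key identity
\begin{equation*}
A^2\cI_4=\iint_{\Sigma^2}\langle n(\sigma),n(\sigma')\rangle^4\,d\sigma\,d\sigma'=\sum_{i,j,k,l}M_{ijkl}^2=\|M\|^2,
\end{equation*}
so that $\cI_4$ is, up to the factor $A^{-2}$, the squared Frobenius norm of $M$. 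The only analytic input I would use is that the normal has unit length: squaring $\sum_i n_i^2=1$ and integrating yields the single linear (trace) relation $\sum_{i,j}M_{iijj}=A$.

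The heart of the argument is a Cauchy--Schwarz estimate in the space of symmetric $4$-tensors. Introduce the isotropic tensor $G_{ijkl}:=\tfrac13(\delta_{ij}\delta_{kl}+\delta_{ik}\delta_{jl}+\delta_{il}\delta_{jk})$. Using the full symmetry of $M$ one checks that $\langle M,G\rangle=\sum_{i,j}M_{iijj}=A$, while a direct count gives $\|G\|^2=5$. Cauchy--Schwarz then yields $\|M\|^2\ge \langle M,G\rangle^2/\|G\|^2=A^2/5$, i.e. $\cI_4\ge 1/5$, with equality precisely when $M=(A/5)\,G$; written componentwise this is exactly \eqref{eq:515}. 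The converse direction of the stated equivalence is then immediate: assuming \eqref{eq:515}, contracting $i=j$ and summing (using $\sum_i n_i^2=1$) gives $\int_\Sigma n_k n_l\,d\sigma=\tfrac{A}{3}\delta_{kl}$, which is the staticity criterion of Lemma \ref{lem:sta3d}, while substituting \eqref{eq:515} back into $\|M\|^2$ recovers $\cI_4=1/5$. In particular doubly static implies static.

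For the upper bounds I would use only that $|\langle n(\sigma),n(\sigma')\rangle|\le 1$, so that $t^4\le t^2\le 1$ for $t=\langle n,n'\rangle\in[-1,1]$. Integrating $\langle n,n'\rangle^4\le 1$ gives $\cI_4\le 1$, with equality iff $|\langle n,n'\rangle|=1$ for a.e. pair, i.e. all normals are (anti)parallel and $\Sigma$ lies in a plane. Integrating instead $\langle n,n'\rangle^4\le\langle n,n'\rangle^2$ and recalling that static surfaces satisfy $\cI_2=1/3$ gives $\cI_4\le\cI_2=1/3$ in the static case, completing the chain $\tfrac15\le\cI_4\le\tfrac13$.

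The one point requiring care is logical rather than conceptual: the Cauchy--Schwarz lower bound holds a priori for \emph{all} symmetric tensors obeying the single trace relation, so I must confirm that the extremal value $\tfrac15$ is realized by a genuine moment tensor and that the equality case is non-vacuous. This is guaranteed because the sphere (or hemisphere) realizes $M=(A/5)G$, making the bound sharp. The remaining work---verifying $\langle M,G\rangle=A$, $\|G\|^2=5$, and the identification of $(A/5)G$ with the three cases in \eqref{eq:515}---is routine tensor bookkeeping.
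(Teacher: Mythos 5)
Your argument is correct, and it reaches the same structural skeleton as the paper's proof --- the identity $A^2\cI_4=\sum_{i,j,k,l}\bigl(\int_\Sigma n_in_jn_kn_l\,d\sigma\bigr)^2$, the single trace constraint $\sum_{i,j}\int_\Sigma n_i^2n_j^2\,d\sigma=A$ coming from $|n|=1$, and the upper bounds via $\langle n,n'\rangle^4\le\langle n,n'\rangle^2\le 1$ together with $\cI_2=1/3$ for static surfaces --- but it executes the key lower bound quite differently. The paper first discards the squares of all ``odd'' moments, then splits the fifteen remaining even moments (counted with trinomial multiplicities) into two groups, introduces $X=\max_k\int_\Sigma n_k^2\,d\sigma$, and minimizes a quadratic in $X$ by hand; the equality analysis then has to separately recover the vanishing of the discarded terms and the specific values in \eqref{eq:515}. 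Your single Cauchy--Schwarz against the isotropic tensor $G_{ijkl}=\tfrac13(\delta_{ij}\delta_{kl}+\delta_{ik}\delta_{jl}+\delta_{il}\delta_{jk})$ (with $\langle M,G\rangle=A$ and $\|G\|^2=5$, both of which I checked) delivers $\cI_4\ge 1/5$ in one line and, more importantly, gives the equality case for free as $M=(A/5)G$, which is exactly \eqref{eq:515} componentwise and immediately contracts to the staticity criterion of Lemma \ref{lem:sta3d}. This is cleaner and less error-prone than the paper's bookkeeping, and it generalizes transparently (replace $G$ by the symmetrized product of $k/2$ Kronecker deltas to recover the paper's subsequent $\cI_k\ge 1/(k+1)$ lemma). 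Your closing remark that sharpness is witnessed by the sphere is not logically needed for the lemma as stated, but it is a sound sanity check. One cosmetic caution: the paper's definition normalizes $\cI_k$ by $A^{-2}$, so keep the factors of $A^2$ consistent as you do; the paper itself is not entirely consistent on this point in the neighbouring general-$k$ lemma.
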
	
\begin{proof}
The upper bounds are due to $\cI_4\leq\cI$. For generic $\Sigma$,
\begin{align}
\label{eq:i4sum}
A^2\cI_4&\notag:=\iint_{\Sigma^2} \langle n(\sigma), n(\sigma')\rangle^4\,d\sigma d\sigma'=\sum_{i,j,k,l}\left(\int_{\Sigma}n_in_jn_kn_ld\sigma\right)^2\\&\notag
\geq 3\left[\sum_{i}\frac{1}{3}\left(\int_{\Sigma}n_i^4d\sigma\right)^2+\sum_{i<j}2\left(\int_{\Sigma}n_i^2n_j^2d\sigma\right)^2\right]\\&\notag=3\left[\left(\frac{a_{ii}^2}{9}+\frac{a_{ii}^2}{9}+\frac{a_{ii}^2}{9}+\frac{a_{jj}^2}{9}+\frac{a_{jj}^2}{9}+\frac{a_{jj}^2}{9}+a_{ij}^2+a_{ij}^2+a_{ik}^2+a_{jk}^2\right)\right.\\&\left.+\left(\frac{a_{kk}^2}{9}+\frac{a_{kk}^2}{9}+\frac{a_{kk}^2}{9}+a_{ik}^2+a_{jk}^2\right)\right]
\end{align}
with the notation $a_{ij}:=\int_{\Sigma}n_i^2n_j^2d\sigma$, where $\{i,j,k\}$ is any permutation of $\{1,2,3\}$. On the RHS of \eqref{eq:i4sum} there are two brackets: the former is the sum of squares of ten terms, the latter of another five. The latter five sum up to
\[\frac{a_{kk}}{3}+\frac{a_{kk}}{3}+\frac{a_{kk}}{3}+a_{ik}+a_{jk}=\int_{\Sigma}n_k^2\,d\sigma.\]
Since the sum of all fifteen is simply $A$, the former ten have a total of $A-\int_{\Sigma}n_k^2\,d\sigma$. With the same idea as in Lemma \ref{lem:sta3d},
\begin{equation*}
\cI_4\geq \frac{3}{10}\left[3X^2-2XA+A^2\right]
\end{equation*}
where
\begin{equation*}
X=X_{\Sigma}:=\max_{k=1,2,3}\left(\int_{\Sigma}n_k^2\,d\sigma\right).
\end{equation*}
Now $\sum_{k=1,2,3}\int_{\Sigma}n_k^2\,d\sigma=A$ and the three summands are non-negative, so that $X\geq A/3$. Moreover, if $A$ is fixed, then $3x^2-2xA+A^2$ is an increasing function of $x$ for $x>A/3$. It follows that $\cI_4\geq 1/5$.
\\
In addition, if this minimum is achieved, then necessarily
$\int_{\Sigma}n_k^2\,d\sigma=A/3$ for $k=1,2,3$, and \eqref{eq:515} must hold. It then also follows that 
\begin{equation*}
\int_{\Sigma}n_in_j\,d\sigma=\int_{\Sigma}n_i^3n_j\,d\sigma+\int_{\Sigma}n_in_j^3\,d\sigma+\int_{\Sigma}n_in_jn_k^2\,d\sigma=0, \qquad i\neq j.
\end{equation*}
\end{proof}

\begin{lemma}
	One has $\cI_k=A^2/(k+1)$ if and only if $\Sigma$ is static and
	\begin{equation}
	\label{eq:515gen}
	\int n_1^xn_2^yn_3^zd\sigma= \begin{cases}
	A\frac{(x-1)!!(y-1)!!(z-1)!!}{(k+1)!!}\qquad \text{if } x,y,z \text{ are all even,}\\
	0\qquad \text{otherwise,}
	\end{cases}
	\end{equation}
	for all $x,y,z\geq 0$ satisfying $x+y+z=k$.
	\\
	Generic surfaces $\Sigma$ satisfy
	$A^2/(k+1)\leq\cI_k\leq A^2$
	(and the maximum is attained by surfaces contained in a plane). If $\Sigma$ is static, then
	\[\frac{A^2}{k+1}\leq\cI_k\leq\frac{A^2}{3}.\]
\end{lemma}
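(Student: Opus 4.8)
The plan is to recast the (unnormalised) energy integral $A^2\cI_k:=\iint_{\Sigma^2}\langle n(\sigma),n(\sigma')\rangle^k\,d\sigma\,d\sigma'$ as a quadratic form in the push-forward of the area measure under the Gauss map, and then exploit the manifest positive-definiteness of the kernel $\langle\theta,\theta'\rangle^k$. Since $k$ is even, the integrand is insensitive to the sign of the normal, so I may introduce the probability measure $\nu:=\tfrac1A(n)_*(d\sigma)$ on $\mathbb S^2$ and write $\cI_k=\iint_{\mathbb S^2\times\mathbb S^2}\langle\theta,\theta'\rangle^k\,d\nu(\theta)\,d\nu(\theta')$. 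The key observation is the tensor-power identity $\langle\theta,\theta'\rangle^k=\langle\theta^{\otimes k},(\theta')^{\otimes k}\rangle$, which shows at once that $K(\theta,\theta'):=\langle\theta,\theta'\rangle^k$ is a positive-definite kernel.

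First I would split $\nu=U+\mu$, where $U$ is the uniform probability measure on $\mathbb S^2$ and $\mu:=\nu-U$ has total mass zero, and expand $\cI_k=\iint K\,dU\,dU+2\iint K\,dU\,d\mu+\iint K\,d\mu\,d\mu$. By rotational invariance the inner integral $\int_{\mathbb S^2}\langle\theta,\theta'\rangle^k\,dU(\theta')=\tfrac12\int_{-1}^1 t^k\,dt=\tfrac1{k+1}$ is constant in $\theta$, so the cross term vanishes (because $\mu$ has zero mass) and the first term equals $\tfrac1{k+1}$. The tensor-power identity turns the last term into $\iint K\,d\mu\,d\mu=\bigl\|\int_{\mathbb S^2}\theta^{\otimes k}\,d\mu(\theta)\bigr\|^2\ge0$. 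This yields the lower bound $\cI_k\ge\tfrac1{k+1}$ immediately, with equality precisely when $\int\theta^{\otimes k}\,d\mu=0$, i.e.\ when every degree-$k$ moment of $\nu$ agrees with that of $U$. Computing the uniform moments $\int_{\mathbb S^2}\theta_1^x\theta_2^y\theta_3^z\,dU=\tfrac{(x-1)!!(y-1)!!(z-1)!!}{(k+1)!!}$ for all-even exponents (and $0$ otherwise) and multiplying back by $A$ gives exactly the moment conditions \eqref{eq:515gen}, proving the characterisation of the minimum.

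For the upper bounds I would argue pointwise. Since $n(\sigma),n(\sigma')$ are unit vectors, $|\langle n,n'\rangle|\le1$, hence $\langle n,n'\rangle^k\le1$, giving $\cI_k\le1$ in general, with equality iff $\langle n,n'\rangle=\pm1$ for a.e.\ pair, i.e.\ the Gauss map is essentially constant and $\Sigma$ lies in a plane. The same estimate $|\langle n,n'\rangle|\le1$ gives $\langle n,n'\rangle^k\le\langle n,n'\rangle^2$ for even $k\ge2$; integrating over $\Sigma^2$ yields $\cI_k\le\cI_2$, which equals $\tfrac13$ for a static surface by definition, producing the refined range $\tfrac1{k+1}\le\cI_k\le\tfrac13$.

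Finally I would record that \eqref{eq:515gen} forces staticity, so the two clauses of the ``iff'' are consistent. Using $\sum_i n_i^2\equiv1$, for every multi-index $\beta$ with $|\beta|=k-2$ one has $\int_\Sigma n^\beta\,d\sigma=\sum_i\int_\Sigma n^{\beta+2e_i}\,d\sigma$; since $U$ satisfies the identical recursion, agreement of the degree-$k$ moments propagates downward and forces the degree-$2$ moments, i.e.\ $\cI_2=1/3$, equivalently staticity via Lemma \ref{lem:sta3d}. Specialising to $k=2$ and $k=4$ recovers Lemmas \ref{lem:sta3d} and \ref{lem:dou3d}. The only genuinely delicate point is this last bookkeeping step, namely checking that the explicit double-factorial constants in \eqref{eq:515gen} are precisely the uniform-sphere moments and are compatible with the unit-norm recursion; it is purely computational, as the entire analytic content sits in the positivity of the kernel $\langle\theta,\theta'\rangle^k$.
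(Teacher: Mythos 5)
Your proof is correct, and it takes a genuinely different route from the paper's. The paper expands $\langle n(\sigma),n(\sigma')\rangle^k$ into trinomial monomials, discards the (non-negative) squares coming from exponent triples that are not all even, and then runs a weighted ``sum of squares with fixed total is minimized when all summands are equal'' argument: each surviving square is counted with multiplicity $c'_{x,y,z}$ (these sum to $(k+1)!!$), the terms containing $n_3^2$ are split off, and the bound is obtained by optimizing the resulting quadratic in $X=\max_k\int_\Sigma n_k^2\,d\sigma$ over $X\geq A/3$. You instead push the area measure forward under the Gauss map to a measure $\nu$ on $\mathbb{S}^2$, split $\nu=U+\mu$ with $U$ uniform and $\mu$ of zero total mass, kill the cross term using Archimedes' fact that $\int\langle\theta,\theta'\rangle^k\,dU(\theta)=\frac{1}{k+1}$ is constant in $\theta'$, and identify the remaining term as $\|\int\theta^{\otimes k}\,d\mu\|^2\geq 0$ via the positive-definiteness of the kernel. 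This is cleaner and more conceptual: the lower bound and its equality characterization (all degree-$k$ moments of $\nu$ must coincide with the uniform ones, which is exactly \eqref{eq:515gen} after multiplying by $A$) drop out simultaneously, and the double-factorial constants are \emph{explained} as uniform-sphere moments rather than verified by bookkeeping. Your remaining steps — propagating the degree-$k$ moment identities down to degree $2$ via $\sum_i n_i^2\equiv 1$ to recover staticity, and the pointwise bounds $\langle n,n'\rangle^k\leq 1$ and $\langle n,n'\rangle^k\leq\langle n,n'\rangle^2$ for the upper bounds — match the paper's reasoning. Two cosmetic caveats: the lemma's $\cI_k$ is unnormalized (hence the bounds $A^2/(k+1)\leq\cI_k\leq A^2$) while yours carries the extra factor $A^{-2}$; and since the normal is only defined locally up to sign, one should symmetrize $\nu$ under the antipodal map, which is harmless here because only even total-degree monomials enter the argument.
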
	
\begin{proof}
	The upper bounds are due to $\cI_k\leq\cI$. For generic $\Sigma$,
	\begin{align}
	\cI_k&\notag:=\iint_{\Sigma^2} \langle n(\sigma), n(\sigma')\rangle^k\,d\sigma d\sigma'=\sum_{x+y+z=k}c_{x,y,z}\left(\int_{\Sigma}n_1^xn_2^yn_3^zd\sigma\right)^2\\&\notag
	\geq
	\sum_{\substack{x+y+z=k\\x,y,z \text{ even}}}c_{x,y,z}\left(\int_{\Sigma}n_1^xn_2^yn_3^zd\sigma\right)^2
	\\&\notag=(k-1)!!\left[
	\sum_{\substack{x+y=k\\x,y\\ \text{ even}}}\frac{c_{x,y,0}}{(k-1)!!}\left(\int_{\Sigma}n_1^xn_2^yd\sigma\right)^2+\sum_{\substack{x+y+z=k\\x,y,z \text{ even}\\z\geq 2}}\frac{c_{x,y,z}}{(k-1)!!}\left(\int_{\Sigma}n_1^xn_2^yn_3^zd\sigma\right)^2\right],
	\end{align}
	with
	\[c_{x,y,z}:=\binom{k}{x\ y\ z }\]
	the trinomial coefficient. In the RHS, we replace each summand with
	\[c'_{x,y,z}:=\binom{k}{x\ y\ z }\frac{(x-1)!!^2(y-1)!!^2(z-1)!!^2}{(k-1)!!}\]
	copies of
	\[\left(\int_{\Sigma}\frac{n_1^xn_2^yn_3^z}{(x-1)!!(y-1)!!(z-1)!!}d\sigma\right)^2.\]
	The sum of all integrals is $A$, and there are a total of
	\[\sum_{\substack{x+y+z=k\\x,y,z \text{ even}}}c'_{x,y,z}=(k+1)!!\]
	integrals. The second sum contains a third of the terms, and they sum up to 
	$\int_{\Sigma}n_3^2\,d\sigma$. With the same idea as in Lemma \ref{lem:sta3d},
	\begin{equation*}
	\cI_k\geq(k-1)!! \left[\frac{(A-X)^2}{2(k+1)!!/3}+\frac{X^2}{(k+1)!!/3}\right]=\frac{3}{2(k+1)}\left[3X^2-2XA+A^2\right]
	\end{equation*}
	where
	\begin{equation*}
	X=X_{\Sigma}:=\max_{k=1,2,3}\left(\int_{\Sigma}n_k^2\,d\sigma\right).
	\end{equation*}
	Now $\sum_{k=1,2,3}\int_{\Sigma}n_k^2\,d\sigma=A$ and the three summands are non-negative, so that $X\geq A/3$. Moreover, if $A$ is fixed, then $3x^2-2xA+A^2$ is an increasing function of $x$ for $x>A/3$. It follows that $\cI_k\geq A^2/(k+1)$.
	\\
	In addition, if this minimum is achieved, then necessarily
	$\int_{\Sigma}n_k^2\,d\sigma=A/3$ for $k=1,2,3$, and \eqref{eq:515gen} must hold. It then also follows that 
	\begin{equation*}
	\int_{\Sigma}n_in_j\,d\sigma=\int_{\Sigma}n_in_j(n_1^2+n_2^2+n_3^2)^{(k-2)}\,d\sigma=0, \qquad i\neq j.
	\end{equation*}
\end{proof}

\appendix 
\section{Proofs of auxiliary lemmas}\label{AuxLem}
\begin{proof} [Proof of Lemma \ref{lem:sta}]
Let $\cC$ be a curve parameterized by arc-lenght, and $\nu$ be a probability measure on $\cS^1$. We can assume that $\nu$ is invariant w.r.t. rotations by $\pi/2$, indeed the curve is static if and only if $4\mathcal B_{\mathcal C}(\nu) - L^2=0$ holds for $\nu$ the uniform measure. Via some manipulations, we rewrite
\begin{equation}\label{relaz1}\mathcal{B}_{\cC}(\nu)=\frac{L^2}{8}(1+2\cI_2)+\frac{L^2}{8} \widehat{\nu}(4) (1-2\cI_2^{\perp});
\end{equation}
with $\widehat{\nu}(k)=\int_{\cS^1} z^{-k} d \nu(z)$, for any $k \in \mathbb{Z}$, Fourier transform of $\nu$; note that $\widehat{\nu}(4)$ is real if $\nu$ is invariant under the transformations $z \to \bar{z}$ and $z \to i \cdot z$; and 
\begin{align*}
\cI_2&=\cI_{2,\cC}=\frac{1}{L^2}\int_{0}^{L}\int_{0}^{L}\langle\dg(t),\dg(u)\rangle^2 dt du, \\
 \cI_2^{\perp}&=\cI_{2,\cC}^{\perp}=\frac{1}{L^2}\int_{0}^{L}\int_{0}^{L}\langle\dg(t),(\dg_2(u),\dg_1(u))\rangle^2 dt du.
 \end{align*}
Then $\cC$ is static if and only if $\cI_2=\cI_2^{\perp}=1/2$: indeed if $\cI_2=\cI_2^{\perp}=1/2$ then from \eqref{relaz1} it holds that $4\mathcal B_{\mathcal C}(\nu)-L^2=0$ in particular for $\nu$ the uniform measure. On the other hand, we may rearrange
\[\cI_2=\sum_{i,j=1,2}\left(\frac{1}{L}\int_{0}^{L}\dg_i(t)\dg_j(t)dt\right)^2\geq\sum_{i=1,2}\left(\frac{1}{L}\int_{0}^{L}\dg_i(t)^2dt\right)^2.\]
Since $\int_{0}^{L}\dg_1(t)^2dt+\int_{0}^{L}\dg_2(t)^2dt=L$, then $\cI_2\geq 1/2$ with equality if and only if $\int_{0}^{L}\dg_1(t)^2dt=\int_{0}^{L}\dg_2(t)^2dt=L/2$ and $\int_{0}^{L}\dg_1(t)\dg_2(t)dt=0$. These conditions also ensure that $\cI_2^{\perp}=1/2$, and the proof of this lemma is complete.
\end{proof}\\

\noindent \begin{proof}[Proof of Lemma \ref{lem:dou}]
Similarly to the proof of Lemma \ref{lem:sta}, we have 
\begin{equation}
\label{eq:I4expand}
\cI_4=
\sum_{i=0}^{4}\binom{4}{i}\left[\frac{1}{L}\int_{0}^{L}\dg_1(t)^i\dg_2(t)^{4-i}dt\right]^2
\geq\sum_{i=1,2}\left(\frac{1}{L}\int_{0}^{L}\dg_i(t)^4dt\right)^2+6\left(\frac{1}{L}\int_{0}^{L}\dg_1(t)^2\dg_2(t)^2dt\right)^2.
\end{equation}
Clearly $\int_{0}^{L}\dg_1(t)^4dt+\int_{0}^{L}\dg_2(t)^4dt+2\int_{0}^{L}\dg_1(t)^2\dg_2(t)^2dt=L$, hence $\cI_4\geq 3/8$ 
with equality iff
\begin{equation}
\label{eq:dou}
\frac{1}{L}\int_{0}^{L}\dg_i(t)\dg_j(t)\dg_k(t)\dg_l(t)dt= \begin{cases}
3/8\qquad \text{if } i=j=k=l,\\
1/8\qquad \text{if } i=j\neq k=l,\\
0\qquad \text{\;\;\, otherwise.}
\end{cases}
\end{equation}
If \eqref{eq:dou} holds true, this clearly means that $A=1/8$, and $B=0$, and moreover $\cC$ is static due to Lemma \ref{lem:sta} 
(e.g. $\int_{0}^{L}\dg_1(t)^2dt=\int_{0}^{L}\dg_1(t)^4dt+\int_{0}^{L}\dg_1(t)^2\dg_2(t)^2dt=3L/8+L/8=L/2$). 
\\
Vice versa, assume that $\cC$ is static, $A=1/8$, and $B=0$. By staticity we have $\int_{0}^{L}\dg_1(t)^4dt=\int_{0}^{L}\dg_2(t)^4dt$. Using $A=1/8$ and the fact that $\cC$ is unit speed we get also the first case of \eqref{eq:dou}. As for the third one, it suffices to point out that $\int_{0}^{L}\dg_1(t)\dg_2(t)^3dt=-B=0$. 
\end{proof}\\

\noindent \begin{proof}[Proof of Lemma \ref{lem:roswig}]
To construct a family of doubly static curves, we adapt \cite[Appendix G]{roswig}. The condition is $\cI_{4,\cC}=3/8$. Bearing in mind \cite[(G.4)]{roswig},
we impose
\[\sum_{j=0}^{k-1}\cos\left(\dg(t)-\phi(u)+j\cdot\frac{2\pi}{k}\right)^4=\frac{3k}{8}\]
where $\dg(u)=\exp(i\phi(u))$. Due to the identity
\[\cos(x)^4=\frac{3}{8}+\frac{\cos(2x)}{2}+\frac{\cos(4x)}{8},\]
it suffices to impose
\[\frac{k}{\gcd(2,k)}\geq 2 \qquad\text{and}\qquad \frac{k}{\gcd(4,k)}\geq 2,\]
i.e. $k=3$ or $k\geq 5$. For $k=4$ the curve is static, but not necessarily doubly static (for $\cC$ to be static we only need $k/\gcd(2,k)\geq 2$).
\end{proof} \\

\noindent \begin{proof}[Proof of Lemma \ref{lem:ABI}]
To prove \eqref{eq:I4}, we rewrite $\cI_4$ as
\[\cI_4=\sum_{i=0}^{4}\binom{4}{i}\left[\frac{1}{L}\int_{0}^{L}\dg_1(t)^i\dg_2(t)^{4-i}dt\right]^2.\]
The terms for $i=1,2$ are simply $4B^2$ and $6A^2$. Since $\cC$ is static, in light of Lemma \ref{lem:sta} the term for $i=3$ is $4(-B)^2$, 
and the terms for $i=0,4$ are each equal to $(1/2-A)^2$. Rearranging proves \eqref{eq:I4}.
\\
For static curves $(\int\dg_i^4dt/L)+A=1/2$, with $i=1,2$. Moreover, by Cauchy-Schwartz, $A$ is always the smaller of the two summands 
, hence $0<A<1/4$ (extrema excluded else $\cC$ would be a straight line segment). Rearranging, $-1<\cI_4'<1$.
\\
The inequality $3/8\leq\cI_4$ is shown in Lemma \ref{lem:dou}. For the upper bound, by Cauchy-Schwartz 
$\cI_4<\cI_2$ (as defined in Lemma \ref{lem:sta}), and $\cI_2=1/2$ due to staticity (extremum excluded again else $\cC$ would be a straight line segment). Lastly, we combine $\cI_4<1/2$ and \eqref{eq:I4} to find $B^2<A(1-4A)/4$.
\end{proof}\\

To state the technical results in Lemma \ref{lemI}, we introduce the following notation 
\begin{align*}
{\cal I}&:=  \frac{1}{L}\int_0^L\Big [ \frac{3 + \eta}{8} \dg_1^4(t)  + \frac{3 + \eta}{8}  \dg_2^4(t) + 6 \frac{1- \eta}{8}  \dg_1^2(t) \dg_2^2(t)\Big ] dt,\\
{\cal J} &:= \frac{1}{L}\int_0^L \Big[ \Big (\frac{3+\eta}{8}  \Big )^2\dg_1^4(t) + \Big ( \frac{1-\eta}{8}\Big )^2 \dg_1^4(t) + 4\Big ( \frac{3+\eta}{8}\Big ) \Big ( \frac{1-\eta}{8}\Big )\dg_1^2(t) \dg_2^2(t) \\
&\hspace{1.6cm} + \Big ( \frac{1-\eta}{8}\Big )^2 \dg_2^4(t) + \Big (\frac{3+\eta}{8}  \Big )^2\dg_2^4(t) + 8 \Big ( \frac{1-\eta}{8}\Big )^2 \dg_1^2(t) \dg_2^2(t) \Big] dt
\end{align*}
and we observe that 
\begin{align*}
{\cal I}&= \frac{3}{8} + \frac{\eta}{8} {\cal I}'_4, \hspace{1cm} {\cal J} = 5 \frac{2}{8^2} + \frac{4}{8^2} \eta {\cal I}'_4 + \frac{2}{8^2} \eta^2.
\end{align*}
\begin{lemma}\label{lemI}
 We have 
 \begin{align}
 & \frac{1}{\mathcal N_n} \sum_{\lambda\in \Lambda_n} I_{\lambda, \lambda}(2,0) = \frac 1 2, \label{sumeq20}\\ 
 &   \frac{1}{\mathcal N_n} \sum_{\lambda\in \Lambda_n} I_{\lambda, \lambda}(2,2) = {\mathcal I},  \label{sumeq} \\
 &  \frac{1}{\mathcal N_n^2} \sum_{\lambda, \lambda'\in \Lambda_n} I_{\lambda, \lambda'}(2,2) = \frac1 4.  \label{sumdif} \\
& \frac{1}{n^2 {\mathcal N}_n^2} \sum_{\lambda, \lambda'\in \Lambda_n} \langle \lambda, \lambda'\rangle^2  
= \frac 1 2,  \label{bra}\\ 
&\frac{1}{n^2 {\mathcal N}_n^2} \sum_{\lambda, \lambda'\in \Lambda_n} \langle \lambda, \lambda'\rangle^2 I_{\lambda,\lambda'}(2,2) 
= {\cal J}.  \label{brasumdif}
\end{align}
\end{lemma}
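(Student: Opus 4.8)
The unifying idea is to parametrize each frequency by its angle and reduce everything to angular moments of $\mu_n$. Write $u_\lambda:=\lambda/|\lambda|=\lambda/\sqrt n=(\cos\theta_\lambda,\sin\theta_\lambda)$, so that $\langle u_\lambda,\dg(t)\rangle=\cos\theta_\lambda\,\dg_1(t)+\sin\theta_\lambda\,\dg_2(t)$ and $\langle\lambda,\lambda'\rangle=n\langle u_\lambda,u_{\lambda'}\rangle$. The only arithmetic input is the set of Fourier coefficients $\widehat\mu_n(k)=\frac{1}{\mathcal N_n}\sum_{\lambda\in\Lambda_n}e^{-ik\theta_\lambda}$. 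Since $\Lambda_n$ is invariant under rotation by $\pi/2$ one has $\widehat\mu_n(2)=0$ \emph{exactly}; equivalently the second-moment matrix $M_n:=\frac{1}{\mathcal N_n}\sum_\lambda u_\lambda u_\lambda^\top$ equals $\tfrac12 I_2$, i.e. $\frac{1}{\mathcal N_n}\sum_\lambda\langle u_\lambda,v\rangle^2=\tfrac12|v|^2$ for every $v\in\mathbb R^2$. Moreover $\widehat\mu_n(4)$ is real and $\widehat\mu_n(4)\to\eta$; expanding $\cos^4,\sin^4,\cos^2\sin^2,\cos^3\sin,\cos\sin^3$ into harmonics and using $\widehat\mu_n(2)=0$ together with the reality of $\widehat\mu_n(4)$ gives the degree-four angular moments $\frac{1}{\mathcal N_n}\sum\cos^4\theta_\lambda=\frac{1}{\mathcal N_n}\sum\sin^4\theta_\lambda=\frac{3+\widehat\mu_n(4)}{8}$, $\frac{1}{\mathcal N_n}\sum\cos^2\theta_\lambda\sin^2\theta_\lambda=\frac{1-\widehat\mu_n(4)}{8}$, and $\frac{1}{\mathcal N_n}\sum\cos^3\theta_\lambda\sin\theta_\lambda=\frac{1}{\mathcal N_n}\sum\cos\theta_\lambda\sin^3\theta_\lambda=0$. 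These are the only facts needed; in particular no $\delta$-separation and no probabilistic input enter, and the identities $\eqref{sumeq20}$, $\eqref{sumdif}$, $\eqref{bra}$ are exact while $\eqref{sumeq}$, $\eqref{brasumdif}$ hold with $\eta$ replaced by $\widehat\mu_n(4)$, hence in the limit.

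The three identities $\eqref{sumeq20}$, $\eqref{sumdif}$, $\eqref{bra}$ follow immediately from $M_n=\tfrac12 I_2$. For $\eqref{sumeq20}$ I would interchange sum and integral and use $\frac{1}{\mathcal N_n}\sum_\lambda\langle u_\lambda,\dg(t)\rangle^2=\tfrac12|\dg(t)|^2=\tfrac12$ by arc-length parametrization. For $\eqref{sumdif}$ the double sum factorizes termwise into $\frac1L\int_0^L\big(\frac{1}{\mathcal N_n}\sum_\lambda\langle u_\lambda,\dg(t)\rangle^2\big)^2dt=\frac1L\int_0^L\tfrac14\,dt=\tfrac14$. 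For $\eqref{bra}$, writing $\langle\lambda,\lambda'\rangle^2=n^2\langle u_\lambda,u_{\lambda'}\rangle^2$ and factorizing gives $\frac{1}{\mathcal N_n^2}\sum_{\lambda,\lambda'}\langle u_\lambda,u_{\lambda'}\rangle^2=\|M_n\|_F^2=\operatorname{tr}(M_n^2)=\tfrac12$.

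For $\eqref{sumeq}$ I would expand $\langle u_\lambda,\dg\rangle^4$ into the five monomials $\cos^4\theta_\lambda\,\dg_1^4+4\cos^3\theta_\lambda\sin\theta_\lambda\,\dg_1^3\dg_2+6\cos^2\theta_\lambda\sin^2\theta_\lambda\,\dg_1^2\dg_2^2+\cdots$, integrate termwise, and substitute the degree-four moments: the two odd moments vanish and the surviving terms reproduce exactly the integrand defining ${\cal I}$. The substantive identity is $\eqref{brasumdif}$. Writing $\langle\lambda,\lambda'\rangle^2=n^2\sum_{i,j}(u_\lambda)_i(u_\lambda)_j(u_{\lambda'})_i(u_{\lambda'})_j$ and pushing the $t$-integral outside the double sum, the sum again factorizes into a Frobenius norm,
\[
\frac{1}{n^2\mathcal N_n^2}\sum_{\lambda,\lambda'}\langle\lambda,\lambda'\rangle^2 I_{\lambda,\lambda'}(2,2)=\frac1L\int_0^L\|N_n(\dg(t))\|_F^2\,dt,\qquad N_n(v):=\frac{1}{\mathcal N_n}\sum_\lambda u_\lambda u_\lambda^\top\langle u_\lambda,v\rangle^2 .
\]
Each entry of $N_n(\dg)$ is a combination of degree-four angular moments times quadratics in $\dg_1,\dg_2$; substituting the moments (so that, with $a:=\frac{3+\eta}{8}$, $b:=\frac{1-\eta}{8}$, the diagonal entries tend to $a\dg_1^2+b\dg_2^2$ and $b\dg_1^2+a\dg_2^2$ and the off-diagonal to $2b\dg_1\dg_2$) and expanding $\|N_n(\dg)\|_F^2=N_{11}^2+2N_{12}^2+N_{22}^2=(a^2+b^2)(\dg_1^4+\dg_2^4)+(4ab+8b^2)\dg_1^2\dg_2^2$ reproduces termwise the six-term integrand of ${\cal J}$.

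The main obstacle is purely the bookkeeping in $\eqref{brasumdif}$: one must track the five degree-four monomials in $(\cos\theta_\lambda,\sin\theta_\lambda)$, their products against $\dg_1,\dg_2$, and verify that the Frobenius expansion coincides with ${\cal J}$. Everything else is a direct consequence of the two structural facts $M_n=\tfrac12 I_2$ and $\widehat\mu_n(4)\to\eta$, so the whole lemma is a matter of organizing the angular-moment computation cleanly rather than of any genuine analytic difficulty.
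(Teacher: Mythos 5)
Your proof is correct and follows essentially the same route as the paper's: both arguments reduce every identity to the second- and fourth-order moments of $\mu_n$ (namely $\frac{1}{n\mathcal N_n}\sum_{\lambda}\lambda_i^2=\frac12$, $\frac{1}{n\mathcal N_n}\sum_{\lambda}\lambda_1\lambda_2=0$, $\frac{1}{n^2\mathcal N_n}\sum_{\lambda}\lambda_i^4=\frac{3+\widehat\mu_n(4)}{8}$, $\frac{1}{n^2\mathcal N_n}\sum_{\lambda}\lambda_1^2\lambda_2^2=\frac{1-\widehat\mu_n(4)}{8}$, together with the vanishing of the odd moments) and then interchange sums and integrals, your trace/Frobenius-norm packaging of the double sums being only a tidier bookkeeping of the same monomial expansion the paper writes out term by term. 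Your observation that \eqref{sumeq} and \eqref{brasumdif} are exact with $\eta$ replaced by $\widehat\mu_n(4)$ and hold as stated only in the limit is a correct and slightly more careful reading of what the paper actually proves.
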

\begin{proof} 
Equation \eqref{sumeq20} follows immediately by observing that 
\begin{align*}
\frac{1}{ \mathcal{N}_n} \sum_{\lambda\in \Lambda_n}  I_{\lambda,\lambda}(2,0) &= 
\frac{1}{n \mathcal{N}_n} \sum_{\lambda \in \Lambda_n}  \frac{1}{L}\int_0^L [ \lambda_1 \dg_1(t) + \lambda_2 \dg_2 (t)]^2  d t \\ 
&= \frac{1}{n \mathcal{N}_n} \sum_{\lambda \in \Lambda_n}  \frac{1}{L}\int_0^L 
[\lambda_1^2  \dg_1^2(t) +  \lambda_2^2 \dg_2^2(t) + 2 \lambda_1 \lambda_2 \dg_1(t) \dg_2(t) ] dt\\
&= \frac{1}{L}\int_0^L\Big [\frac 1 2 \dg_1^2(t)  + \frac 1 2  \dg_2^2(t) \Big ] dt \\
&= \frac 1 2  \frac{1}{L} \int_0^L \langle \dg(t), \dg(t) \rangle  \, dt = \frac 1 2,
\end{align*}
where we used the relations 
\begin{equation}\frac{1}{n \mathcal{N}_n} \sum_{\lambda \in \Lambda_n} \lambda_i^2=\frac 1 2, \;\; i=1,2, \hspace{0.5cm} {\rm and } \hspace{0.5cm} \frac{1}{n \mathcal{N}_n} \sum_{\lambda \in \Lambda_n} \lambda_1 \lambda_2=0. \label{ig} \end{equation}
The proof of \eqref{sumeq} is similar, in fact  
\begin{align*}
\frac{1}{ \mathcal{N}_n} \sum_{\lambda\in \Lambda_n}  I_{\lambda,\lambda}(2,2) &= 
\frac{1}{n^2 \mathcal{N}_n} \sum_{\lambda \in \Lambda_n}  \frac{1}{L}\int_0^L [ \lambda_1 \dg_1(t) + \lambda_2 \dg_2(t)]^4  d t \\ 
&= \frac{1}{n^2 \mathcal{N}_n} \sum_{\lambda \in \Lambda_n}  \frac{1}{L}\int_0^L [ \lambda_1^4  \dg_1^4(t) +  \lambda_2^4 \dg_2^4(t) + 6 \lambda_1^2 \lambda_2^2 \dg_1^2(t) \dg_2^2(t) ] dt\\
&= {\cal I},
\end{align*}
since 
$$\frac{1}{n^2 \mathcal{N}_n} \sum_{\lambda \in \Lambda_n} \lambda_i^4=\frac{3+\eta}{8}, \;\; i=1,2, \hspace{0.5cm} {\rm and } \hspace{0.5cm} \frac{1}{n^2 \mathcal{N}_n} \sum_{\lambda \in \Lambda_n} \lambda_1^2 \lambda_2^2=\frac{1-\eta}{8}.$$
To prove \eqref{sumdif}, we observe that  
\begin{align*}
\frac{1}{ \mathcal{N}_n^2} \sum_{\lambda, \lambda'\in \Lambda_n}  I_{\lambda,\lambda'}(2,2) &= 
\frac{1}{n^2 \mathcal{N}_n^2} \sum_{\lambda, \lambda'\in \Lambda_n}  \frac{1}{L}\int_0^L [\lambda_1 \dg_1(t) + \lambda_2 \dg_2(t)]^2 [\lambda'_1 \dg_1(t) + \lambda'_2 \dg_2(t)]^2 d t \\ 
&= \frac{1}{n^2 \mathcal{N}_n^2} \sum_{\lambda, \lambda'\in \Lambda_n}  \frac{1}{L}\int_0^L [ \lambda_1^2 \dg_1^2(t) + \lambda_2^2 \dg_2^2(t) + 2 \lambda_1\lambda_2\dg_1(t) \dg_2(t)] \\
& \hspace{3.4cm} \times  [ (\lambda'_1)^2 \dg_1^2(t) + (\lambda'_2)^2 \dg_2^2(t) + 2 \lambda'_1\lambda'_2\dg_1(t) \dg_2(t) ] d t \\
&= \frac{1}{n^2 \mathcal{N}_n^2} \sum_{\lambda, \lambda'\in \Lambda_n}  \frac{1}{L}\int_0^L 
[ \lambda_1^2 (\lambda_1')^2 \dg_1^4(t) + \lambda_1^2 (\lambda_2')^2 \dg_1^2(t) \dg_2^2(t)  \\
&\hspace{3.6cm} + \lambda_2^2 (\lambda_1')^2 \dg_1^2(t) \dg_2^2(t) + \lambda_2^2 (\lambda'_2)^2 \dg_2^4(t) ] dt\\
&= \frac{1}{L}\int_0^L\Big [ \frac14 \dg_1^4(t) + \frac12 \dg_1^2(t) \dg_2^2(t) + \frac14 \dg_2^4(t) \Big ] dt \\
&= \frac14 \frac{1}{L}\int_0^L \langle \dg(t), \dg(t) \rangle^2 dt =\frac14.
\end{align*}
Equation \eqref{bra} is an immediate consequence of \eqref{ig}. Equation \eqref{brasumdif} follows from  
\begin{align*}
&\frac{1}{n^2 \mathcal{N}_n^2} \sum_{\lambda, \lambda'\in \Lambda_n} \langle \lambda, \lambda'\rangle^2 I_{\lambda,\lambda'}(2,2) \\
&= 
\frac{1}{n^4 \mathcal{N}_n^2} \sum_{\lambda, \lambda'\in \Lambda_n} (\lambda_1 \lambda'_1+\lambda_2\lambda_2')^2 \frac{1}{L}\int_0^L [\lambda_1 \dg_1(t) + \lambda_2 \dg_2(t)]^2 [\lambda'_1 \dg_1 (t)+ \lambda'_2 \dg_2(t) ]^2 d t \\ 
&= \frac{1}{n^4 \mathcal{N}_n^2} \sum_{\lambda, \lambda'\in \Lambda_n} \{ \lambda^2_1 (\lambda'_1)^2+\lambda_2^2(\lambda_2')^2 + 2 \lambda_1\lambda_2\lambda_1'\lambda_2'\} \\
&\;\; \times \frac{1}{L}\int_0^L [ \lambda_1^2 \dg_1^2(t) + \lambda_2^2 \dg_2^2 (t)+ 2 \lambda_1\lambda_2\dg_1(t) \dg_2(t) ]\, [ (\lambda'_1)^2 \dg_1^2(t) + (\lambda'_2)^2 \dg_2^2(t) + 2 \lambda'_1\lambda'_2\dg_1(t) \dg_2(t) ] d t \\
&= \frac{1}{n^4 \mathcal{N}_n^2} \sum_{\lambda, \lambda'\in \Lambda_n}  \frac{1}{L}\int_0^L [\lambda_1^4 (\lambda_1')^4 \dg_1^4(t) + \lambda_1^2 (\lambda_1')^2 \lambda_2^2 (\lambda_2')^2 \dg_1^4 (t) + \lambda_1^4 (\lambda_1')^2 (\lambda_2')^2 \dg_1^2(t) \dg_2^2(t)  \\
&\hspace{3cm}+ \lambda_1^2 \lambda_2^2 (\lambda_2')^4\dg_1^2(t) \dg_2^2(t)+ \lambda_1^2\lambda_2^2 (\lambda_1')^4\dg_1^2(t) \dg_2^2 (t)+ (\lambda'_2)^2\lambda_2^4 (\lambda_1')^2\dg_1^2(t) \dg_2^2(t)\\
&\hspace{3cm}+ \lambda_1^2 (\lambda_1')^2\lambda_2^2 (\lambda_2')^2 \dg_2^4(t) +  \lambda_2^4 (\lambda_2')^4 \dg_2^4(t)+ 8 \lambda_1^2 \lambda_2^2 (\lambda_1')^2(\lambda_2')^2 \dg_1^2 (t)\dg_2^2 (t)] dt\\ 
&= {\mathcal J}.
\end{align*}

\end{proof}

\bibliographystyle{alpha}
\bibliography{bibfile.bib}
\end{document}